\documentclass[12pt]{article}

\usepackage{amssymb}
\usepackage[english]{babel}
\usepackage[utf8]{inputenc}
\usepackage{amsmath}
\usepackage{amsthm}
\usepackage{latexsym}
\usepackage{stmaryrd}
\usepackage{color}
\usepackage{mathtools}
\usepackage{amsfonts}
\usepackage[margin=2cm]{geometry}
\usepackage[mathscr]{euscript}
\usepackage{tikz}
\usepackage[colorlinks=true,allcolors=black]{hyperref}
\usepackage{subcaption}
\usepackage{caption}

\setlength{\parindent}{0pt}

\newtheorem{thm}{Theorem}
\newtheorem{proposition}{Proposition}

\newtheorem{rem}{Remark}
\newtheorem{lem}{Lemma}
\newtheorem{claim}{Claim}
\newtheorem{cor}{Corollary}

\newcommand{\R}{\mathbb{R}}
\newcommand{\He}{\mathbb{H}}
\newcommand{\overHe}{\overline{\He}}

\makeatletter
\newcommand*{\rom}[1]{\expandafter\@slowromancap\romannumeral #1@}
\makeatother

\DeclareMathOperator*{\esssup}{ess\,sup}
\DeclareMathOperator*{\essinf}{ess\,inf}

\author{Costante Bellettini\\University College London \and Myles Workman\\University College London}
\title{Embeddedness of Min-Max CMC Hypersurfaces in Manifolds with Positive Ricci Curvature}
\date{}

\begin{document}

\maketitle

\begin{abstract}
    We prove that on a compact Riemannian manifold of dimension $3$ or higher, with positive Ricci curvature, the Allen--Cahn min-max scheme in \cite{BW-inhomogeneous-allen-cahn}, with prescribing function taken to be a non-zero constant $\lambda$, produces an embedded hypersurface of constant mean curvature $\lambda$ ($\lambda$-CMC). More precisely, we prove that the interface arising from said min-max contains no even-multiplicity minimal hypersurface and no quasi-embedded points (both of these occurrences are in principle possible in the conclusions of \cite{BW-inhomogeneous-allen-cahn}). The immediate geometric corollary is the existence (in ambient manifolds as above) of embedded, closed $\lambda$-CMC hypersurfaces (with Morse index $1$) for any prescribed non-zero constant $\lambda$, with the expected singular set when the ambient dimension is $8$ or higher.
\end{abstract}

\begin{thm}\label{thm: Main Theorem}
    For any $\lambda \in \mathbb{R} \setminus \{0\}$, and compact Riemannian manifold $(N,g)$, with positive Ricci curvature and $\text{dim} \, N = n + 1\geq 3$, there exists a smooth, embedded, two-sided hypersurface $M$, with constant mean curvature $\lambda$ ($\lambda$-CMC), and
    \begin{enumerate}
        \item $M$ is closed when $2 \leq n \leq 6$, 
        \item $\overline{M} \setminus M$ consists of finitely many points when $n = 7$, 
        \item $\text{dim}_{\mathcal{H}} \, (\overline{M} \setminus M) \leq n - 7$, when $n \geq 8$.
    \end{enumerate}
\end{thm}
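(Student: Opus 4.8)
The plan is to invoke the Allen--Cahn min-max scheme of \cite{BW-inhomogeneous-allen-cahn} with prescribing function the non-zero constant $\lambda$, and then to use the hypothesis $\Ric>0$ to eliminate the two phenomena --- even-multiplicity minimal pieces and quasi-embedded points --- that the scheme leaves open. Recall that the scheme furnishes, for a.e.\ value of the relevant parameter, a non-constant solution $u_\epsilon$ of the $\lambda$-Allen--Cahn equation with uniformly bounded energy and Morse index $\le 1$, whose interfaces converge as $\epsilon\to 0$ to a limit-interface $V=|M|+V_0$: here $M$ is a quasi-embedded, two-sided, multiplicity-one $\lambda$-CMC hypersurface, smooth off a closed set of dimension $\le n-7$ (empty for $n\le 6$, finite for $n=7$), and $V_0$ is a stationary integral varifold whose regular part is a two-sided minimal hypersurface of even multiplicity; moreover the Morse index of $V$ is $\le 1$ (lower semicontinuity of the Allen--Cahn index) and $M\neq\emptyset$ (since $u_\epsilon$ is non-constant and $\lambda\neq0$, both phases survive in the limit with positive measure). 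The bounds on $\overline M\setminus M$ in the theorem will be exactly those of \cite{BW-inhomogeneous-allen-cahn}, so it suffices to prove $V_0=0$ and that $M$ has no quasi-embedded point.

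Positive Ricci curvature enters first through the well-known fact that $(N,g)$ carries no closed stable minimal hypersurface, and more generally that every closed two-sided hypersurface $\Sigma$ of constant mean curvature is unstable, i.e.\ its Jacobi operator $L_\Sigma=\Delta_\Sigma+|A_\Sigma|^2+\Ric(\nu,\nu)$ has positive Morse index: testing with $\phi\equiv1$,
\[
\int_\Sigma \phi\,(-L_\Sigma)\phi \;=\; -\int_\Sigma\bigl(|A_\Sigma|^2+\Ric(\nu,\nu)\bigr)\;<\;0,
\]
and in the singular case one uses cut-offs of the (zero-capacity) codimension-$\ge7$ singular set, passing to the orientation double cover if $\Sigma$ is one-sided. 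Hence if $V_0\neq0$ its support is a closed minimal hypersurface contributing a further unstable direction to $V$ beyond the one carried by $M$, forcing the Morse index of $V$ to be $\ge2$, a contradiction; the same remark shows $M$ must be connected, since two distinct closed components would each contribute an unstable direction. Therefore $V=|M|$ with $M$ a connected $\lambda$-CMC hypersurface of Morse index exactly $1$, and every quasi-embedded point of $M$ is necessarily a point where $M$ touches \emph{itself}.

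It remains to exclude such self-touching points, and I expect this to be the main obstacle. Suppose $p$ were one, so that near $p$ the hypersurface $M$ consists of two smooth sheets $\Sigma_1,\Sigma_2$ meeting tangentially. Writing them as graphs over their common tangent plane and applying the strong maximum principle / Hopf boundary lemma to the quasilinear $\lambda$-CMC operator rules out the possibility that the thin region between the sheets lies in the same Allen--Cahn phase as the regions on their far sides; thus the two sheets must have mean curvature vectors pointing \emph{away} from each other (a configuration the maximum principle alone does permit --- compare two externally tangent geodesic balls in a round $3$-sphere). Excluding this residual configuration is where $\Ric>0$ must be exploited \emph{globally}. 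The route I would pursue is to show that a self-touching point forces a \emph{second}, linearly independent unstable direction for $M$ --- contradicting Morse index $1$ --- by combining the overall dilational instability of the closed $\lambda$-CMC hypersurface with a competitor variation adapted to the two sheets near $p$, using their opposite coorientations together with the ambient curvature of the thin ``lens'' region between them; an alternative is to extract from a self-touching point a pinched, mean-concave region of $N\setminus M$ with $\lambda$-CMC boundary and to derive a contradiction with $\Ric>0$ in the spirit of a Frankel-type argument. The genuinely delicate points are the fine control of such a competitor across the density-two point $p$, its interaction with the codimension-$\ge7$ singular set in the higher-dimensional cases, and verifying the strict sign of the resulting second variation.

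Granting $V_0=0$ and the absence of self-touching points, $M$ is a smooth, embedded, two-sided $\lambda$-CMC hypersurface of Morse index $1$, closed when $2\le n\le6$, with $\overline M\setminus M$ finite when $n=7$ and of Hausdorff dimension $\le n-7$ when $n\ge8$ --- which is precisely Theorem~\ref{thm: Main Theorem}.
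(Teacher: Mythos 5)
Your proposal identifies the correct starting point and the two phenomena that must be ruled out, but both of your key steps have genuine gaps. For $V_0=0$: you assert ``the Morse index of $V$ is $\le1$'' by ``lower semicontinuity of the Allen--Cahn index,'' and deduce that an unstable minimal piece plus an unstable CMC piece would force index $\ge2$. But no index-transfer result of this kind is available when the limit varifold has an even-multiplicity component: the theorems of \cite{Fritz-index-paper} and \cite{Mantoulidis_2022} require multiplicity one. Since $V_0$ carries multiplicity $\ge2$, transferring instability of $\text{supp}\,\|V_0\|$ to the Allen--Cahn solutions would require controlling the interaction of the $\ge2$ coalescing transition layers, which is exactly the open case (the paper notes it is unknown whether even-multiplicity minimal pieces can occur at all in bounded-index inhomogeneous Allen--Cahn with nowhere-vanishing inhomogeneity); moreover, $\text{supp}\,\|V_0\|$ may touch $\overline{M}$ tangentially, so the two destabilizing directions you propose need not have disjoint supports. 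The paper's actual argument uses no index transfer at this stage. It constructs an explicit admissible $W^{1,2}(N)$ path --- sliding truncated one-dimensional Allen--Cahn profiles along level sets of the signed distance to $M$, implemented on the abstract cylinder $\tilde{M}\times\mathbb{R}$ so as to handle the singular set and quasi-embedded points --- whose maximum energy is at most $2\sigma\mathcal{H}^n(M)-\sigma\lambda\mu_g(E)+\sigma\lambda\mu_g(N\setminus E)+\tau$ for any $\tau>0$ once $\varepsilon$ is small. Since the min-max value $\beta_\varepsilon$ tends to this quantity plus $2\sigma\mathbb{M}(V_0)$, the min-max characterization forces $\mathbb{M}(V_0)=0$. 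Ricci positivity enters via the Heintze--Karcher-type comparison $H_{\Gamma_t}\ge\lambda+mt$ for $t\ge0$ (and the opposite inequality for $t\le0$), which guarantees the path attains its maximum at $M$ itself.

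For embeddedness, you correctly reduce to excluding the externally tangent two-sheet configuration (mean-curvature vectors pointing away from each other), but you then explicitly leave the decisive step unresolved, enumerating the ``genuinely delicate points'' without addressing them. That is precisely where the paper's main construction lies: assuming a non-embedded point $z_0$, it edits the above path near its maximum by locally pushing the two sheets past each other around $z_0$ and deleting the overlap (dropping the energy by a fixed $\varsigma>0$ depending only on the geometry at $z_0$), and then, in several carefully ordered stages exploiting the locality of the edit, reconnects the competitor to both stable endpoints $a_\varepsilon$ and $b_\varepsilon$ with energy staying uniformly below the min-max level minus $\varsigma$, again contradicting the min-max characterization of $u_{\varepsilon_j}$. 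Your Frankel-type and second-unstable-direction heuristics are plausible starting points, but as written the embeddedness step is a sketch, not a proof.
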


In Theorem \ref{thm: Main Theorem} the emphasis is on the fact that $M$ is embedded: this appears to be a new result. The statement of Theorem \ref{thm: Main Theorem} with embedded replaced by (the weaker notion of) quasi-embedded was on the other hand known, as detailed below (with two methods available). We recall that quasi-embedded means that the hypersurface is a smooth immersion, with any self-intersections being tangential, and with local structure around any point of tangential intersection being that of two embedded disks lying on opposite sides of each other (see \cite[Definition 8]{BW-inhomogeneous-allen-cahn}).

\bigskip 

As it will be important for our arguments, we begin by recalling that the existence result in Theorem \ref{thm: Main Theorem}, with embedded replaced by quasi-embedded, follows from the work of the first author and N. Wickramasekera in \cite{BW-inhomogeneous-allen-cahn}. 
In fact, \cite[Theorem 1.1]{BW-inhomogeneous-allen-cahn} proves the following more general result.
Given a compact Riemannian manifold, $(N,g)$, $\text{dim} \, N \geq 3$ (without any curvature assumptions) and a non-negative Lipschitz function $h:N\to \R$, there exists a quasi-embedded, two-sided $C^2$ hypersurface $M_{h}$ such that, for each $x \in M_{h}$, the scalar mean curvature of $M_{h}$ at $x$ is given by $h(x)$; the singular set $\overline{M_{h}} \setminus M_{h}$ satisfies the dimensional estimates listed in Theorem \ref{thm: Main Theorem}.
The construction of $M_{h}$ is carried out in the Allen--Cahn min-max framework, and serves as a starting point for the present work. 
We briefly recall it here in the case $h=\lambda$ constant, with further details in Section \ref{subsec: Allen-Cahn and CMC preliminaries}.

\bigskip 

Consider a sequence of functions $\{u_{i}\}$ in $W^{1,2}(N)$, where each $u_{i}$ is the solution of the appropriate $\varepsilon_{i}$-scaled inhomogeneous Allen--Cahn equation, with $\varepsilon_{i} \rightarrow 0$.
Assuming uniform energy bound, the works of J. Hutchinson--Y. Tonegawa \cite{Hutchinson-Tonegawa2000ConvergenceOP} and M. R\"{o}ger--Y. Tonegawa \cite{roger-Tonegawa2008convergence} give, in the $\varepsilon_{i}\to 0$ limit, an integral varifold $V$ (a ``limit interface"), with generalised mean curvature $H_{V} \in L^{\infty} (\text{supp} \|V\|)$, along with a Caccioppoli set $E$, with $\partial^{*} E \subset \text{supp} \, \|V\|$, such that, 

\begin{equation*}
    \begin{cases}
        H_{V} (x) = \lambda, \, \theta_{V} (x) = 1, & \mathcal{H}^{n} - a.e. \, x \in \partial^{*} E, \\
        H_{V} (x) = 0, \, \theta_{V} (x) \in 2 \mathbb{Z}_{\geq 1}, & \mathcal{H}^{n} - a.e. \, x \in \text{supp} \, \|V\| \setminus \partial^{*} E.
    \end{cases}
\end{equation*}
In the presence of such a sequence $\{ u_{i} \}$, the two major roadblocks to an existence result for a $\lambda$-CMC are (i) $\partial^{*} E$ may be empty, in which case the limit interface is actually minimal (ii) even if $\partial^{*} E \not= \emptyset$, it may not have sufficient regularity (\cite[Figure 1]{BW-inhomogeneous-allen-cahn} illustrates how lack of regularity could prevent $\partial^{*} E$ from being an admissible candidate).
In \cite{BW-inhomogeneous-allen-cahn} a (first) sequence $u_{i}$ is produced by means of a classical mountain pass lemma; the Morse index of $u_{i}$ is at most $1$ (as a consequence of the fact that the min-max has one parameter). It is moreover shown (see \cite[Remark 6.7]{BW-inhomogeneous-allen-cahn}) that in the case of ambient manifold with positive Ricci curvature (and with $h=\lambda$ constant), occurrence (i) cannot arise, that is, $\partial^{*} E$ is non-trivial when $u_{i}$ is the sequence obtained from the min-max. For arbitrary ambient manifolds, in the event that $u_{i}$ leads to occurrence (i), \cite{BW-inhomogeneous-allen-cahn} implements a gradient flow that yields a (second) sequence $\{v_{i}\}$, for which $\partial^{*} E \not= \emptyset$ and with Morse index $0$. The matter is thus reduced to a regularity question for the limit interface arising from a sequence $u_{i}$ with uniformly bounded Morse index. This index control is used in a key way to obtain regularity (\cite[Theorem 1.2]{BW-inhomogeneous-allen-cahn}), whose proof relies on extensions of Y. Tonegawa's work \cite{Tonegawa-Stable-2005} and Y. Tonegawa--N. Wickramasekera's work \cite{TW-stable-2010}, and crucially on the (non-variational) varifold regularity result \cite[Theorem 9.1]{BW-Stable-PMC} (see also \cite[Theorem 3.2]{BW-inhomogeneous-allen-cahn}). In conclusion, \cite{BW-inhomogeneous-allen-cahn} obtains that $V = V_{\lambda} + V_{0}$, where $\text{supp} \, \|V_{\lambda}\| = \partial E = \overline{M_{\lambda}}$ and $\text{supp} \, \|V_{0}\| = \overline{M_{0}}$; here $M_{\lambda}$ is a two-sided, quasi-embedded $\lambda$-CMC hypersurface, and $M_{0}$ an embedded minimal hypersurface, both satisfying the dimensional estimates listed in Theorem \ref{thm: Main Theorem}.
Furthermore, any intersections between $M_{\lambda}$ and $M_{0}$, and self-intersections of $M_{\lambda}$, are always tangential intersections of $C^{2}$ graphs lying on one side of each other.

\bigskip 

With this as a starting point, our first step in establishing Theorem \ref{thm: Main Theorem} is to show that when $\text{Ric}_{g} > 0$, the one-parameter Allen--Cahn min-max just recalled does not produce any minimal components in the limit interface, i.e.~$V_{0} = 0$. (As mentioned earlier, in this case \cite{BW-inhomogeneous-allen-cahn} establishes already that $V_{\lambda} \not= 0$ for the  $u_{i}$ produced by min-max.)

\begin{thm}\label{thm: Allen-Cahn minmax limit}
    Let $(N,g)$ be a compact Riemannian manifold of dimension $\geq 3$, with positive Ricci curvature, and $\lambda > 0$. 
    The one-parameter Allen--Cahn min-max in \cite{BW-inhomogeneous-allen-cahn}, with prescribing function set to $\lambda$, produces a two-sided $\lambda$-CMC hypersurface and no minimal hypersurface. 
\end{thm}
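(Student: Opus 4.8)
The plan is to remove the minimal part $V_0$ in three steps: a Laplacian--comparison argument confining the minimal hypersurface to the mean-concave side of $\partial E$; the strong maximum principle, which forces it to stay at positive distance from $\partial E$; and an index count which, exploiting that the min-max has only one parameter, rules it out entirely. We may take $\lambda>0$ and assume, for contradiction, $V_0\neq 0$. Recall from \cite{BW-inhomogeneous-allen-cahn} (as recalled above) that $V=V_\lambda+V_0$ with $\text{supp}\,\|V_\lambda\|=\partial E=\overline{M_\lambda}$, $M_\lambda$ a two-sided quasi-embedded $\lambda$-CMC hypersurface of multiplicity $1$, and $\text{supp}\,\|V_0\|=\overline{M_0}$, $M_0$ an embedded minimal hypersurface of even multiplicity, both with the singular-set bounds of Theorem \ref{thm: Main Theorem}, with all contacts of $M_0$ with $M_\lambda$ and self-contacts of $M_\lambda$ being tangential one-sided contacts of $C^2$ graphs, and with $V_\lambda\neq 0$ by \cite[Remark 6.7]{BW-inhomogeneous-allen-cahn}. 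Since $H_V\equiv\lambda\neq 0$ with respect to the generalised outer normal of $E$, the mean curvature vector of $\partial E$ points consistently into one of the two open regions cut out by $\partial E$; call that region $\Omega^{c}$ (the ``mean-convex side'') and the other one $\Omega$, so $\partial\Omega=\partial E$ has mean curvature vector pointing out of $\Omega$.

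First I would confine $M_0$ to $\overline\Omega$. Let $d:=\dist(\cdot,\partial E)$ on $\overline{\Omega^{c}}$. Along the normal geodesics issuing from $\partial E$ into $\Omega^{c}$, the Riccati equation together with $\Ric_g>0$ shows that the mean curvature of the level sets of $d$, taken in the $\nabla d$--direction, equals $-\lambda$ on $\partial E$ and is non-increasing; hence $\Delta d\le-\lambda<0$ throughout $\Omega^{c}$, interpreted via upper support functions past the cut locus of $\partial E$ (where $d$ is locally a minimum of smooth Fermi-distance functions, each still satisfying $\Delta\le-\lambda$). If some component of $M_0$ met $\overline{\Omega^{c}}$, the restriction $d|_{M_0}$ would attain an interior minimum at a point $q$; there $\nabla_N d$ is normal to $M_0$ (because $q$ is either a critical point of $d|_{M_0}$ in the interior, or a tangential contact point $q\in\partial E$), so, using $|\nabla d|\equiv 1$ hence $\Hess d(\nu_0,\nu_0)=0$ and $H_{M_0}=0$, one gets $\Delta_{M_0}d(q)=\Delta_N d(q)\le-\lambda<0$, contradicting the interior-minimum property. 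Thus $M_0\subset\overline\Omega$, and any contact of $M_0$ with $\partial E$ is from the $\Omega$--side. Next, the strong maximum principle excludes even tangential contact: at a contact point $p$ regular for both sheets (such a point exists, if the contact set is non-empty, by the structure theorem and the dimension bounds on the singular sets), writing the sheets as graphs $f$ of $M_\lambda$ and $g$ of $M_0$ over their common tangent hyperplane, the difference $w:=\pm(g-f)\ge 0$ with $w(p)=0$ solves $\mathcal L w=-\lambda$ for a uniformly elliptic $\mathcal L$ --- the sign being $-\lambda$ precisely because $M_0$ lies on the side of $\partial E$ \emph{away from} which the mean curvature vector of $\partial E$ points. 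Absorbing the positive zeroth-order term of $\mathcal L$ into the right-hand side (using $w\ge0$), $w$ is a strict supersolution of an operator with no zeroth-order term, which cannot attain the interior minimum $w(p)=0$. Hence $\overline{M_0}\cap\partial E=\emptyset$ and $\dist(\overline{M_0},\partial E)>0$.

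Finally I would contradict the bound $\text{index}(u_i)\le 1$. Because $\Ric_g>0$, the constant function $1$ satisfies $\int(|A|^2+\Ric(\nu,\nu))>0$ on each of $M_\lambda$ and $M_0$, so the unconstrained second variation --- of area plus enclosed volume at the $\lambda$-CMC hypersurface $M_\lambda$, and of area at the minimal hypersurface $M_0$ (passing to the orientation double cover if $M_0$ is one-sided) --- is negative on the span of $1$; cutting off near the singular sets, which have codimension $\ge 7$ and hence negligible $W^{1,2}$-capacity, preserves this negativity, so each of $M_\lambda$ and $M_0$ carries a negative direction supported, respectively, in a neighbourhood of $\partial E$ and in a neighbourhood of $M_0$ --- and these neighbourhoods are disjoint by the previous step. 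Lifting each such geometric negative direction to a negative direction of the second variation of the $\varepsilon_i$-scaled Allen--Cahn energy at $u_i$, concentrated in a thin tube around the corresponding hypersurface (built from the one-dimensional transition profile times the chosen function), yields for all large $i$ a two-dimensional subspace of $W^{1,2}(N)$ --- its two generators having disjoint support --- on which the second variation at $u_i$ is negative definite. Hence $\text{index}(u_i)\ge 2$, contradicting the one-parameter bound. Therefore $V_0=0$, i.e.\ the Allen--Cahn min-max of \cite{BW-inhomogeneous-allen-cahn} with prescribing function $\lambda$ produces a two-sided $\lambda$-CMC hypersurface and no minimal hypersurface.

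The step I expect to be the main obstacle is the last one: making the ``lifting'' rigorous and uniform as $\varepsilon_i\to 0$ --- in particular estimating the error terms near the (possibly non-empty) singular sets of $M_\lambda$ and $M_0$, so that the cut-offs there do not destroy negativity, and matching the Allen--Cahn quadratic form with the Jacobi/stability form on each hypersurface; this is a quantitative strengthening of the index considerations already present in \cite{BW-inhomogeneous-allen-cahn}. The distance-function and maximum-principle steps are comparatively routine, the points requiring care being the behaviour of $d$ at the cut locus of $\partial E$ and the reduction, in the contact analysis, to a point regular for both sheets.
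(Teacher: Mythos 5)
Your overall plan (confine $M_0$, separate it from $\partial E$ by maximum principle, then get index $\geq 2$) takes a genuinely different route from the paper, which instead constructs an explicit admissible sweepout modelled on level sets of the signed distance to $M_\lambda$ and shows its maximal $\mathcal{F}_{\varepsilon,\lambda}$-energy is asymptotically $A_2 = A_1 - 2\sigma\mathbb{M}(V_0)$, forcing $\mathbb{M}(V_0)=0$ by the min-max characterisation. Unfortunately your route has a fatal sign error in Step 2. Set up graph coordinates at a tangential contact point $p$ with the positive $t$-direction chosen to be the direction of the mean curvature vector of $M_\lambda$, i.e. pointing into $E$; then $E\approx\{t>f\}$, and $M_0\subset N\setminus E$ forces $g\le f$, so the non-negative function is $w=f-g$. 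But with respect to this normal the scalar mean curvature of $M_\lambda$ is $+\lambda$ and that of $M_0$ is $0$, whence $\mathcal{L}w=\mathcal{M}(f)-\mathcal{M}(g)=+\lambda>0$, not $-\lambda$. A non-negative function with an interior zero and $\mathcal{L}w>0$ is perfectly consistent (e.g.\ $w=|x|^2$), so no contradiction arises and the strong maximum principle yields nothing. This is not an artifact of Euclidean signs: it is exactly why the $(\lambda,0)$-CMC structure from \cite{BW-inhomogeneous-allen-cahn} (see local pictures (iv) and (v) in Section 1.1 of the present paper) explicitly admits a minimal disk touching the $\lambda$-CMC disk tangentially from the non-mean-convex side. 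So $\dist(\overline{M_0},\partial E)>0$ is false a priori, and without it your two destabilising directions need not have disjoint supports, collapsing Step 3. (Step 1 is correct but redundant: $\text{supp}\|V_0\|\subset\text{spt}\,V\subset N\setminus E$ is already built into the Hutchinson--Tonegawa/R\"oger--Tonegawa limit analysis.)

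Even setting Step 2 aside, Step 3 has independent obstacles you underweight: $V_0$ has even multiplicity $\geq 2$, so lifting a geometric negative direction on $M_0$ to a negative direction of $\delta^2\mathcal{F}_{\varepsilon_i,\lambda}(u_i)$ concentrated along a thin tube is not covered by the multiplicity-one transplantation estimates of Hiesmayr/Mantoulidis, and the multi-layer Allen--Cahn profile contributes coupling terms between the sheets that do not obviously preserve negativity. The paper sidesteps all of this: its sweepout argument never needs to separate $M_0$ from $M_\lambda$, never needs an Allen--Cahn index lower bound, and gets $V_0=0$ purely from an energy comparison. If you want to retain an index-based strategy you would first need the embeddedness of Theorem \ref{thm: Main Theorem} (which the paper proves only after, and using, $V_0=0$), making the plan circular as stated.
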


Theorem \ref{thm: Allen-Cahn minmax limit} is achieved by exhibiting a suitable continuous path, admissible in the min-max construction (which employs paths that are continuous in $W^{1,2}(N)$). 
This path will move through functions that are each modelled on a level set of the signed distance to $M_{\lambda}$. 
The idea is to try to place a 1-dimensional Allen--Cahn profile along the normal direction to a given level set and thus produce a function (a point in the path). 
This might appear problematic due to the presence of points where the level sets are not smoothly embedded in $N$ (which, for example, may be caused by the presence of the singular set $\overline{M_{\lambda}} \setminus M_{\lambda}$, or by the fact that $M_\lambda$ has quasi-embedded points).  
We handle this after observing that all “problematic points” are contained in a closed $n$-rectifiable set. 
The open complement (in $N$) of this $n$-rectifiable set is described (via a diffeomorphism) as an open subset of $\tilde{M} \times \mathbb{R}$, where $\tilde{M}$ is a (abstract) $n$-manifold whose immersion into $N$ gives $M_\lambda$. We will refer to this open subset as the Abstract Cylinder (which is endowed with a metric pulled back from $N$). 
Each level set of the distance function becomes a subset of $\tilde{M} \times \{s\}$, where $s$ is the chosen distance value. 
The sought path is then defined by “sliding” the 1-dimensional Allen--Cahn profiles in the $\mathbb{R}$-direction in the whole cylinder $\tilde{M} \times \mathbb{R}$, then restricting these functions to the Abstract Cylinder, and passing them to $N$. 
We check that this indeed produces a continuous path in $W^{1,2} (N)$. 
Furthermore, performing the energy calculations on the Abstract Cylinder, we see that the potentially ``problematic points" do not cause any issues.
The sliding argument yields a path with the (key) property that the relevant Allen-Cahn energy attains a maximum (along the path) at the function obtained in correspondence of $M_\lambda$ (signed distance equal to $0$); this relies on the positivity of the Ricci curvature. 
This property of the path easily implies that $V_{0}= 0$ (no minimal component), for otherwise the min-max characterisation of $V$ would be contradicted.
Theorem \ref{thm: Main Theorem} is then proven by showing that the $\lambda$-CMC hypersurface arising in Theorem \ref{thm: Allen-Cahn minmax limit} is, in fact, embedded.
This is again done by exhibiting a suitable path (admissible in the min-max). This path is constructed by editing the previous one about its maximum, under the contradiction assumption that a non-embedded point exists in $M_\lambda$.
The modification requires the identification of suitable hypersurfaces obtained by deforming $M_\lambda$ about the non-embedded point. 
This construction ensures that the modified path attains a maximum that is strictly smaller than the maximum obtained for the path used in the proof of Theorem \ref{thm: Allen-Cahn minmax limit}. This contradicts the min-max characterisation.
We stress that these path constructions capitalise on the a priori knowledge (from \cite{BW-inhomogeneous-allen-cahn}) that $M_\lambda$ and $M_0$ are sufficiently regular.

\bigskip 

We remark that Theorem \ref{thm: Allen-Cahn minmax limit} is somewhat interesting in its own sake: it is an open question whether (and under what assumptions) a sequence of solutions to the inhomogeneous Allen--Cahn equation with nowhere vanishing inhomogeneous term, and with a uniform bound on the Morse index, can produce minimal components. 
(The regularity result in \cite{BW-inhomogeneous-allen-cahn} recalled earlier allows us to refer to the minimal and prescribed-mean-curvature components as hypersurfaces that are separately smooth, except for a possible small singular set when the ambient dimension is $8$ or higher.)
Theorem \ref{thm: Allen-Cahn minmax limit} rules out minimal components in the special instance in which the solutions come from a one-parameter min-max (in $N$ compact with $\text{Ric}_N>0$) and the inhomogeneous term is constant. 

\bigskip 

The absence of minimal components and of non-embedded points established by Theorem \ref{thm: Main Theorem} has, among its consequences, a Morse index estimate: 

\begin{cor}
\label{cor:index}
The $\lambda$-CMC hypersurface in Theorem \ref{thm: Allen-Cahn minmax limit} has Morse index equal to 1.
\end{cor}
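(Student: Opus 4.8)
The plan is to establish the two inequalities $\text{ind}(M)\le 1$ and $\text{ind}(M)\ge 1$ separately. Here $\text{ind}(M)$ denotes the Morse index of $M$ as a critical point of the prescribed-mean-curvature functional $\mathcal{A}-\lambda\,\text{Vol}$ — equivalently, the number of negative eigenvalues of the Jacobi operator $L_M\phi=\Delta_M\phi+(|A_M|^2+\Ric_g(\nu,\nu))\phi$ on $M$ (which is two-sided, so $\nu$ is a global unit normal), computed with the usual cutoff procedure near $\overline{M}\setminus M$ when $n\ge 7$; this is legitimate since $\dim_{\mathcal{H}}(\overline{M}\setminus M)\le n-7$.

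For the upper bound I would use that the approximating solutions $u_i$ from the one-parameter (mountain-pass) min-max have Morse index at most $1$, as recalled above, together with the lower semicontinuity of this bound under the Allen--Cahn limit. Since Theorems \ref{thm: Allen-Cahn minmax limit} and \ref{thm: Main Theorem} now identify the limit interface as exactly $V_\lambda$, with $\text{supp}\,\|V_\lambda\|=\overline{M}$ and $M$ a smooth, embedded, two-sided $\lambda$-CMC hypersurface (no minimal component, no quasi-embedded points), the quadratic forms given by the second variation of $E_{\varepsilon_i}$ at $u_i$ converge, in the appropriate sense, to the quadratic form associated with $L_M$, whence $\text{ind}(M)\le\liminf_i\text{ind}(u_i)\le 1$ (cf.\ \cite{BW-inhomogeneous-allen-cahn}). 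I expect this to be the step requiring the most care: making the spectral passage to the limit rigorous. It is precisely the regularity from \cite{BW-inhomogeneous-allen-cahn} combined with Theorems \ref{thm: Allen-Cahn minmax limit}--\ref{thm: Main Theorem} that makes $\text{ind}(M)$ the classical Morse index of a smooth hypersurface here and this passage legitimate — without embeddedness and the absence of a minimal component, the very notion of the index of the interface would be ambiguous.

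For the lower bound I would show that $M$ is unstable, exploiting $\Ric_g>0$. Testing the stability inequality for $L_M$ with $\phi\equiv 1$ when $M$ is closed, or, when $n\ge 7$, with a sequence of logarithmic cutoffs $\phi_j$ equal to $1$ off a shrinking neighbourhood of $\overline{M}\setminus M$ and satisfying $\int_M|\nabla\phi_j|^2\to 0$ (such cutoffs exist because $\overline{M}\setminus M$ has vanishing $2$-capacity in $M$, its Hausdorff dimension being at most $n-7<n-2$), one obtains
\[
\int_M|\nabla\phi_j|^2-\bigl(|A_M|^2+\Ric_g(\nu,\nu)\bigr)\phi_j^2\ \le\ \int_M|\nabla\phi_j|^2-\Ric_g(\nu,\nu)\,\phi_j^2\ \longrightarrow\ -\int_M\Ric_g(\nu,\nu)\ <\ 0,
\]
using $|A_M|^2\ge 0$, $\mathcal{H}^n(M)<\infty$ (from the uniform mass/energy bound along the min-max) and dominated convergence ($\Ric_g$ is bounded on the compact $N$), together with $\Ric_g>0$. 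Hence the quadratic form of $L_M$ is not nonnegative, i.e.\ $L_M$ has at least one negative eigenvalue and $\text{ind}(M)\ge 1$; this is essentially the classical computation showing that positive Ricci curvature forbids stable minimal hypersurfaces. Combining the two bounds gives $\text{ind}(M)=1$.
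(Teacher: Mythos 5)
Your proof follows the same two-step structure as the paper: the upper bound $\text{ind}(M)\le 1$ is delegated to the Allen--Cahn index-semicontinuity literature (the paper cites \cite[Theorem 1a]{Mantoulidis_2022}, noting that the argument of \cite{Fritz-index-paper} also applies; both require precisely the embeddedness and two-sidedness that Theorems~\ref{thm: Allen-Cahn minmax limit}--\ref{thm: Main Theorem} provide), and the lower bound comes from testing the second variation with constants cut off near $\overline{M}\setminus M$. The one genuine variation is in which nonnegative term you discard in the second-variation integrand: you keep $\Ric_g(\nu,\nu)>0$ and drop $|A_M|^2\ge 0$, whereas the paper keeps $|A_M|^2\ge n^{-2}\lambda^2$ (from $H_M=\lambda\ne 0$) and drops the Ricci term. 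Both are valid given the hypotheses, and both give a fixed negative bulk contribution that beats the vanishing gradient term; your choice is the classical ``no stable minimal hypersurfaces under $\Ric>0$'' computation, while the paper's uses only the CMC constraint, which in fact would give instability without any curvature assumption once the cutoff estimate is in hand. One small point where you are less precise than the paper: to conclude $\int_M|\nabla^M\phi_j|^2\to 0$ you invoke vanishing $2$-capacity ``in $M$''; the paper constructs the cutoffs ambiently (using the $2$-capacity of $\overline{M}\setminus M$ in $N$) and then needs the monotonicity formula for $|\overline{M}|$, hence Euclidean volume growth of $M$ near the singular set, to transfer the capacity estimate from $N$ to $M$. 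That volume-growth ingredient is what makes the restriction to $M$ legitimate and should be made explicit.
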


This follows directly from C. Mantoulidis \cite{Mantoulidis_2022}. Alternatively, the arguments of F. Hiesmayr \cite{Fritz-index-paper} apply verbatim. (We refer to Section \ref{sec: index} for the definition of Morse index.)

\bigskip 

As we recalled, \cite{BW-inhomogeneous-allen-cahn} employs an Allen--Cahn approximation scheme to construct the $\lambda$-CMC quasi-embedded hypersurface. 
The statement of Theorem \ref{thm: Main Theorem} with embedded replaced by quasi-embedded can also be obtained (without any curvature assumption on $N$) using the so-called Almgren--Pitts method for the min-max, see the combined works of X. Zhou--J. Zhu \cite{ZhouZhuCMC} ($2 \leq n \leq 6$) and A. Dey in \cite{DeyCMC} (for $n\geq 7$, relying on the compactness theory in \cite{BW-Stable-CMC, BW-Stable-PMC}).

\bigskip 

Regardless of the method used for the min-max construction, and without the need of curvature assumptions, if $2 \leq n \leq 6$ the $\lambda$-CMC hypersurface obtained is closed and immersed (completely smooth). 
In B. White's work \cite[Theorem 35]{White-Generic-Transversality-of-minimal-submanifolds} it is proven that for each $\lambda \in \mathbb{R}$, there exists a generic set (in the sense of Baire category) of smooth metrics on the ambient manifold such that any closed, codimension-1 (completely smooth) immersion with constant mean curvature $\lambda$, is self-transverse. 
Therefore, combining the existence of quasi-embedded $\lambda$-CMC (\cite{BW-inhomogeneous-allen-cahn} or \cite{ZhouZhuCMC}) with \cite[Theorem 35]{White-Generic-Transversality-of-minimal-submanifolds}, one obtains:
when $2\leq n \leq 6$, for any $\lambda$, there exists a generic set of metrics on $N$, such that each admits an embedded $\lambda$-CMC hypersurface.\footnote{A more general version of this statement is available for $h$-PMC hypersurfaces, $2 \leq n \leq 6$, by again combining \cite[Theorem 35]{White-Generic-Transversality-of-minimal-submanifolds} with either \cite{BW-inhomogeneous-allen-cahn} or \cite{ZhouZhuPMC}. 
Note that the class of prescribing functions, $h$, is different in these two results.}

\bigskip 

This argument relies however on the complete smoothness of the $\lambda$-CMC hypersurface, which is not available for $n\geq 7$ in the existence results. The flavour of Theorem \ref{thm: Main Theorem} differs from the statement just given in that it allows a singular set and can handle all dimensions; moreover the class of metrics (Ricci positive metrics) is the same for all $\lambda \in \mathbb{R}$. 
We also stress that the proof of embeddedness in Theorem \ref{thm: Main Theorem} exploits the min-max characterisation of the $\lambda$-CMC, while one can apply \cite[Theorem 35]{White-Generic-Transversality-of-minimal-submanifolds} to any smooth CMC immersion, not necessarily one coming from a min-max.
Theorem \ref{thm: Main Theorem} and \ref{thm: Allen-Cahn minmax limit} may also hold with other assumptions on the metric on $N$, or other choices on the set of prescribing functions. 
(In these different scenarios an alternative approach to the sliding argument mentioned above could be a gradient flow, for example, along the lines of \cite[Section 5.4]{Bellettini-multiplicity-one-bumpy-metric} and \cite[Section 6.9]{BW-inhomogeneous-allen-cahn}.)

\subsection*{Acknowledgments}

C.B.'s research was partially supported by the Engineering and Physical Sciences Research Council [EP/S005641/1]. 

\bigskip 

M.W. was supported by the Engineering and Physical Sciences Research Council [EP/S021590/1]. 
The EPSRC Centre for Doctoral Training in Geometry and Number Theory (The London School of Geometry and Number Theory), University College London.
He would like to thank Kobe Marshall-Stevens for many helpful and insightful discussions.

\tableofcontents

\section{Preliminaries}

\subsection{Allen-Cahn and Construction of CMC Immersion}\label{subsec: Allen-Cahn and CMC preliminaries}

We recall the min-max construction in \cite{BW-inhomogeneous-allen-cahn}, of critical points to the inhomogeneous Allen--Cahn energy,
\begin{equation}\label{eqn: inhomo Allen Cahn energy}
    \mathcal{F}_{\varepsilon, \lambda} (u) = \int_{N} \frac{\varepsilon}{2}|\nabla u|^2 + \frac{W(u)}{\varepsilon} - \sigma \int_{N} \lambda u, \hspace{1cm} \varepsilon \in (0,1), \, u \in W^{1,2} (N).
\end{equation}
Where $W$ is a smooth function on $\mathbb{R}$, with $W (\pm 1) = 0$ being non-degenerate minima, and $W(t) > 0$, for $t \in \mathbb{R} \setminus \{\pm 1\}$. 
Furthermore, we impose that $W$ has only three critical points, $t = 0, \, \pm 1$, and quadratic growth outside some compact interval. 
For example $W(t) = (1 - t^{2})^{2} / 4$, for $t \in [-2,2]$ and has linear growth outside $[-3,3]$.
The constant $\sigma$ is given by, 
\begin{equation*}
    \sigma = \int_{-1}^{1} \sqrt{W(s) / 2} \, ds.
\end{equation*}
Moreover, we take $\lambda > 0$.

\bigskip 

Consider the first and second variations of (\ref{eqn: inhomo Allen Cahn energy}) with respect to $\varphi \in C^{\infty} (N)$,
\begin{eqnarray}
    \delta \mathcal{F}_{\varepsilon, \lambda}(u)(\varphi) &=& \int_{N} \varepsilon \nabla u \cdot \nabla \varphi + \left( \frac{W'(u)}{\varepsilon} - \sigma \lambda \right) \varphi, \label{eqn first variation of F} \\
    \delta^{2} \mathcal{F}_{\varepsilon, \lambda}(u)(\varphi, \varphi) &=& \int_{N} \varepsilon |\nabla \varphi|^2 + \frac{W''(u)}{\varepsilon} \varphi^2.
\end{eqnarray}
We say that $u$ is a critical point of (\ref{eqn: inhomo Allen Cahn energy}), if $\delta \mathcal{F}_{\varepsilon, \lambda} (u) (\varphi) = 0$, for all $\varphi \in C^{\infty} (N)$, and then by standard elliptic theory we have that $u \in C^{\infty} (N)$, and strongly solves, 
\begin{equation}\label{eqn: inhomo Allen Cahn equation}
    \varepsilon \Delta u = \frac{W'(u)}{\varepsilon} - \sigma \lambda.
\end{equation}
If $\delta^{2} \mathcal{F}_{\varepsilon, \lambda} (u) (\varphi, \varphi) \geq 0$, for all $\varphi \in C^{\infty} (N)$, then we say that $u$ is a stable solution to (\ref{eqn: inhomo Allen Cahn equation}). 
By Figure \ref{figure on graph of W'}, we see that there exists two stable constant solutions, $a_{\varepsilon} > -1$, and $b_{\varepsilon} > 1$. 
Furthermore, as $\varepsilon \rightarrow 0$, we have that $a_{\varepsilon} \rightarrow - 1$, and $b_{\varepsilon} \rightarrow 1$.
As $\text{Ric}_{g} > 0$, \cite[Proposition 7.1]{bellettini2020multiplicity1} shows that these are the only stable critical points of (\ref{eqn: inhomo Allen Cahn energy}).

\bigskip 

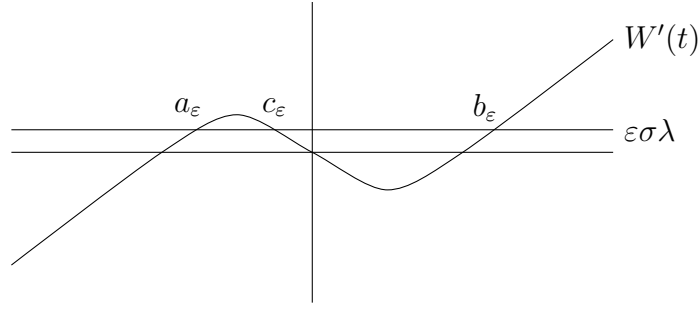
\begin{figure}
    \begin{center}
        \begin{tikzpicture}
            \draw (-4,0) -- (4,0);
            \draw (0,-2) -- (0,2);
            \draw (-4,0.3) -- (4,0.3) node[anchor=west] {$\varepsilon \sigma \lambda$};
            \draw plot [smooth] coordinates {(-4,-1.5) (-2,0) (-1, 0.5) (0,0) (1, -0.5) (2,0) (4,1.5)} node[anchor=west] {$W'(t)$};
            \node at (-1.65, 0.6) {$a_{\varepsilon}$};
            \node at (2.3, 0.6) {$b_{\varepsilon}$};
            \node at (-0.5,0.6) {$c_{\varepsilon}$};
        \end{tikzpicture}
        \caption{Intersection points, $a_{\varepsilon}$, $b_{\varepsilon}$, and $c_{\varepsilon}$, are the solutions to $W'(t) = \varepsilon \sigma \lambda$.}
        \label{figure on graph of W'}
    \end{center}
\end{figure}

\bigskip 

The existence of these isolated, stable solutions permits us to find non-trivial critical points of (\ref{eqn first variation of F}) via a min-max argument. 

\begin{proposition}\label{prop mountain pass solution} (Existence of Min-Max Solution, \cite[Proposition 5.1]{BW-inhomogeneous-allen-cahn})
    For $\varepsilon > 0$, let $\Gamma$ denote the collection of all continuous paths $\gamma \colon [-1,1] \rightarrow W^{1,2}(N)$, such that $\gamma(-1) = a_{\varepsilon}$, and $\gamma(1) = b_{\varepsilon}$. 
    Then there exists an $\varepsilon_0 > 0$, such that for all $\varepsilon < \varepsilon_{0}$, 
    \begin{equation}
        \inf_{\gamma \in \Gamma} \sup_{u \in \gamma([-1,1])} \mathcal{F}_{\varepsilon,\lambda} = \beta_{\varepsilon} > \mathcal{F}_{\varepsilon, \lambda} (a_{\varepsilon}) > \mathcal{F}_{\varepsilon, \lambda} (b_{\varepsilon}),
    \end{equation}
    is a critical value, i.e. there exists $u_{\varepsilon} \in W^{1,2}(N)$, critical point of $\mathcal{F}_{\varepsilon, \lambda}$, with $\mathcal{F}_{\varepsilon, \lambda} (u_{\varepsilon}) = \beta_{\varepsilon}$; moreover, $u_{\varepsilon}$ has Morse index $\leq 1$.
\end{proposition}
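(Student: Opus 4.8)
The statement to prove is Proposition \ref{prop mountain pass solution}, a mountain-pass existence result for critical points of the inhomogeneous Allen--Cahn functional $\mathcal{F}_{\varepsilon,\lambda}$. Let me sketch how I would prove this.

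The plan is to apply a quantitative version of the classical Mountain Pass Lemma (in the Ambrosetti--Rabinowitz form) to $\mathcal{F}_{\varepsilon,\lambda}$ on $W^{1,2}(N)$. The two constant functions $a_\varepsilon$ and $b_\varepsilon$ are the stable critical points; I want to show they sit in distinct "wells" separated by a mountain pass. The key steps are: (1) establish the Palais--Smale condition for $\mathcal{F}_{\varepsilon,\lambda}$ — this uses the quadratic growth of $W$ outside a compact set (so that $\mathcal{F}_{\varepsilon,\lambda}$ is coercive-like modulo the linear term, PS sequences are bounded in $W^{1,2}$, and compactness of the Sobolev embedding plus the structure of the equation upgrades weak convergence to strong); (2) verify the mountain-pass geometry, namely that $\mathcal{F}_{\varepsilon,\lambda}(a_\varepsilon) > \mathcal{F}_{\varepsilon,\lambda}(b_\varepsilon)$ and that there is a "ridge" — a number $\beta_\varepsilon$ strictly above $\mathcal{F}_{\varepsilon,\lambda}(a_\varepsilon)$ such that every path from $a_\varepsilon$ to $b_\varepsilon$ must cross the level $\beta_\varepsilon$; (3) conclude existence of a critical point $u_\varepsilon$ at level $\beta_\varepsilon$; (4) obtain the Morse index bound $\le 1$ from the one-parameter nature of the min-max via the standard deformation/minimax index argument.

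Step (2) is where the real content lies and what I expect to be the main obstacle. Ordering $a_\varepsilon < c_\varepsilon < b_\varepsilon$ are the three constant solutions (the three roots of $W'(t) = \varepsilon\sigma\lambda$ in Figure \ref{figure on graph of W'}), with $a_\varepsilon, b_\varepsilon$ stable local minima of $\mathcal{F}_{\varepsilon,\lambda}$ restricted to constants and $c_\varepsilon$ an unstable one. I would first compute $\mathcal{F}_{\varepsilon,\lambda}$ on constants: $\mathcal{F}_{\varepsilon,\lambda}(t) = |N|\big(W(t)/\varepsilon - \sigma\lambda t\big)$, and since $b_\varepsilon > 1 > a_\varepsilon > -1$ with $W(\pm 1)=0$, a direct estimate (using $W(b_\varepsilon)$ small, $b_\varepsilon$ near $1$, versus $W(a_\varepsilon)$ small, $a_\varepsilon$ near $-1$, so the $-\sigma\lambda t$ term favors $b_\varepsilon$) gives $\mathcal{F}_{\varepsilon,\lambda}(a_\varepsilon) > \mathcal{F}_{\varepsilon,\lambda}(b_\varepsilon)$ for $\varepsilon$ small. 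The harder part is the ridge: I need to show that $a_\varepsilon$ is a \emph{strict} local minimum of $\mathcal{F}_{\varepsilon,\lambda}$ over all of $W^{1,2}(N)$ (not just over constants), and that there is an energy barrier of definite height $\delta_\varepsilon > 0$ surrounding $a_\varepsilon$ at a fixed $W^{1,2}$-distance, with $\mathcal{F}_{\varepsilon,\lambda}(b_\varepsilon)$ lying below $\mathcal{F}_{\varepsilon,\lambda}(a_\varepsilon) + \delta_\varepsilon/2$ or so, so that the mountain-pass level $\beta_\varepsilon := \inf_\gamma \sup_\gamma \mathcal{F}_{\varepsilon,\lambda}$ satisfies $\beta_\varepsilon > \mathcal{F}_{\varepsilon,\lambda}(a_\varepsilon)$. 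Strictness of the local minimum at $a_\varepsilon$ should follow from the stability (nonnegativity of $\delta^2\mathcal{F}_{\varepsilon,\lambda}(a_\varepsilon)$) combined with the \emph{isolatedness} of $a_\varepsilon$ among critical points — here is exactly where the hypothesis $\text{Ric}_g > 0$ and the cited \cite[Proposition 7.1]{bellettini2020multiplicity1} enter: they guarantee $a_\varepsilon$ and $b_\varepsilon$ are the only stable critical points, which in a standard argument (if the mountain-pass level were $= \mathcal{F}_{\varepsilon,\lambda}(a_\varepsilon)$, one produces a critical point on the boundary of a small ball around $a_\varepsilon$, contradicting isolatedness/being the only stable ones, or one appeals to the characterization of local minima) forces $\beta_\varepsilon$ to be strictly larger.

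For step (4), the Morse index bound: once $\beta_\varepsilon$ is a genuine min-max value over one-parameter families, the standard argument (going back to the deformation lemma, as in the treatments of Allen--Cahn min-max by Guaraco, Gaspar--Guaraco, and in \cite{BW-inhomogeneous-allen-cahn} itself) shows one can choose the critical point $u_\varepsilon$ at level $\beta_\varepsilon$ to have Morse index at most the number of parameters, i.e. $\le 1$: if every critical point at that level had index $\ge 2$, a two-dimensional negative-definite subspace of the Hessian would let one push any competitor path below level $\beta_\varepsilon$, contradicting the definition of $\beta_\varepsilon$ as the infimum. Finiteness of the Morse index (so that "index $\le 1$" is meaningful) follows from the compactness of $W^{1,2}(N) \hookrightarrow L^2(N)$ applied to the quadratic form $\delta^2\mathcal{F}_{\varepsilon,\lambda}(u_\varepsilon)$, whose negative part is a compact perturbation of the positive form $\varepsilon\int|\nabla\varphi|^2$. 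Since all of this is essentially a citation of \cite[Proposition 5.1]{BW-inhomogeneous-allen-cahn}, the "proof" here would really amount to recalling these points and pointing to that reference for the details, with the one non-generic ingredient — the strictness $\beta_\varepsilon > \mathcal{F}_{\varepsilon,\lambda}(a_\varepsilon)$ — traced back to the Ricci-positivity via \cite{bellettini2020multiplicity1}.
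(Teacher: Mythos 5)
This proposition is stated in the paper purely as a citation of \cite[Proposition 5.1]{BW-inhomogeneous-allen-cahn}; the paper supplies no proof of its own. Your outline follows the standard Ambrosetti--Rabinowitz/quantitative-deformation template, which is indeed the right family of techniques, but it contains one genuine misattribution that matters conceptually.

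You attribute the strictness $\beta_\varepsilon > \mathcal{F}_{\varepsilon,\lambda}(a_\varepsilon)$ to the positivity of the Ricci curvature (via \cite[Proposition 7.1]{bellettini2020multiplicity1} and the isolatedness of $a_\varepsilon$ among critical points). That is not where this strictness comes from. The cited \cite[Proposition 5.1]{BW-inhomogeneous-allen-cahn} holds on an arbitrary compact manifold of dimension $\ge 3$ with no curvature hypothesis whatsoever, so the mountain-pass geometry cannot rely on $\mathrm{Ric}_g > 0$. What actually secures the strict separation is a direct ``wall'' estimate (cited in the present paper as \cite[Lemma 5.1]{BW-inhomogeneous-allen-cahn} during the discussion of the level inequality): one shows that any $W^{1,2}$-continuous path from $a_\varepsilon$ to $b_\varepsilon$ must pass through a function whose $\mathcal{F}_{\varepsilon,\lambda}$-energy exceeds $\mathcal{F}_{\varepsilon,\lambda}(a_\varepsilon)$ by a definite amount, essentially by exploiting the two-well structure of $W$ and a one-dimensional lower bound for the Allen--Cahn energy on the range of $u$. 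No appeal to $a_\varepsilon$ being an isolated critical point, or a strict local minimum in the full $W^{1,2}$ topology, is needed; the argument is a quantitative energy barrier rather than a local-minimum argument.

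The role of $\mathrm{Ric}_g > 0$ enters only after the proposition, to upgrade the conclusion: in the present paper, Ricci positivity (via \cite[Proposition 7.1]{bellettini2020multiplicity1}) guarantees that $a_\varepsilon$ and $b_\varepsilon$ are the \emph{only} stable critical points, which forces the min-max critical point to have Morse index exactly $1$ rather than merely $\le 1$. If you rewrite your step (2) to establish a wall directly (e.g., via the intermediate-value argument on the $u$-level sets and the structure of $W$) instead of via strict local minimality and isolatedness, the rest of your sketch---Palais--Smale from the quadratic growth of $W$, existence at the min-max level, index $\le 1$ from the one-parameter nature of the families, finiteness of the index from compactness of the negative part of the Hessian---matches the intended argument.
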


In our Ricci positive setting, as $a_{\varepsilon}$ and $b_{\varepsilon}$ are the only stable critical points we actually have that $u_{\varepsilon}$ has Morse index equal to 1.

\bigskip 

Now taking a sequence $\{ \varepsilon_{i} \}_{i \in \mathbb{N}} \subset (0, \varepsilon_{0})$, with $\varepsilon_{i} \rightarrow 0$, and associated critical points from Proposition \ref{prop mountain pass solution}, $\{ u_{i} = u_{\varepsilon_{i}} \}$, we associate the following Radon measures, 
\begin{equation}
    \mu_{i} \coloneqq (2 \sigma)^{-1} \left( \frac{\varepsilon_{i}}{2} |\nabla u_{i}|^{2} + \frac{W(u_{i})}{\varepsilon_{i}} \right) \, d \mu_{g}.
\end{equation}
Where $\mu_{g}$ is the volume measure of $(N, g)$.
Moreover, there exists constants $K$, $L > 0$, such that for all $i$,
\begin{equation}\label{eqn: upper bound for energy for minmax solutions}
    \sup_{N} |u_{i}| + \mu_{i} (N) \leq K, 
\end{equation}
and 
\begin{equation}\label{eqn: lower bound on energy for minmax solutions}
    \mu_{i} (N) \geq L.
\end{equation}

\bigskip 

By the bounds of (\ref{eqn: upper bound for energy for minmax solutions}) and (\ref{eqn: lower bound on energy for minmax solutions}), there exists a subsequence $\{ u_{i'} \} \subset \{ u_{i} \}$, along with a $u_{0} \in BV(N)$, with $u_{0} (y) \in \{ +1, -1\}$ for all $y \in N$, and a non-zero Radon measure $\mu$, such that $u_{i'} \rightarrow u_{0}$ in $L^{1} (N)$, and $\mu_{i'} \rightharpoonup \mu$ as Radon measures.
By \cite[Theorem 1]{Hutchinson-Tonegawa2000ConvergenceOP} and \cite[Theorem 3.2]{roger-Tonegawa2008convergence}, we have that $\mu$ is the weight measure of an integral $n$-varifold $V$, with the following properties:
\begin{enumerate}
    \item \label{pt 1 prop on initial properties of limit varifold, no regularity} $V$, is an integral $n$-varifold with bounded generalised mean curvature $H_{V}$, and first variation $\delta V = - H_{V} \mu_{V}$.
    \item \label{pt 2 prop on initial properties of limit varifold, no regularity} The set $E \coloneqq \{ u_{0} = + 1\}$ is a Caccioppoli set, with reduced boundary $\partial^{*} E \subseteq \text{spt } \, V \subset N \setminus E \not= \emptyset$. 
    \item \label{pt 4 prop on initial properties of limit varifold, no regularity}  For $\mathcal{H}^{n}$-a.e. $x \in \partial^{*}E$, $\Theta(\mu_{V}, x) = 1$, and $H_{V}(x) \cdot \nu(x) = \lambda$; where $\nu$ is the inward pointing unit normal to $\partial^{*} E$, i.e. $\nu = \nabla u_0 / |\nabla u_0|$. 
    \item \label{pt 5 prop on initial properties of limit varifold, no regularity} For $\mathcal{H}^{n}$ a.e. $x \in \text{spt} \, V  \setminus \partial^{*} E$, $\Theta (\mu_{V} , x)$ is an even integer $\geq 2$, and $H_{V} (x) = 0$.
  \end{enumerate}

\bigskip 

Optimal regularity of $V$ was then proven in \cite{BW-inhomogeneous-allen-cahn}.
\begin{enumerate}
    \item $V = V_{0} + V_{\lambda}$
    \item $V_{0}$ is a (possibly zero) stationary integral $n$-varifold with singular set of Hausdorff dimension at most $n - 7$. 
    \item $V_{\lambda} = |\partial^{*} E| \not= \emptyset$, and $\partial^{*} E$ is a quasi-embedded hypersurface with constant mean curvature $\lambda$, with respect to unit normal pointing into $E$. 
    The singular set of $\partial^{*} E$ has Hausdorff dimension at most $n - 7$.
    \item $V$ has a $(\lambda, 0)$-CMC structure.
\end{enumerate}

\bigskip 

By $(\lambda, 0)$-CMC structure we mean that for each point on the support of $V$, potentially away from a closed set of Hausdorff dimension at most $n - 7$, the local picture is one of the following, 
\begin{enumerate}
    \item \label{item: single embedded CMC disk} There is a single embedded $\lambda$-CMC disk.
    \item There are two embedded $\lambda$-CMC disks that lie on either side of each other and only touch tangentially.
    \item There is a single embedded minimal disk 
    \item There is a single embedded $\lambda$-CMC disk and a single embedded minimal disk that only touch tangentially. 
    \item \label{item: two CMC disks and a minimal disk} There are two embedded $\lambda$-CMC disks that lie on either side of each other, along with an embedded minimal disk, such that all three disks only touch tangentially.
\end{enumerate}
For a detailed definition of a $(\lambda, 0)$-CMC structure, see \cite[Definition 8]{BW-inhomogeneous-allen-cahn}.
We define the set $\text{gen-reg} \, V$, to be the set of points on $\text{supp} \, \|V\|$, which satisfy one of the local pictures of \ref{item: single embedded CMC disk} to \ref{item: two CMC disks and a minimal disk}.
For a detailed definition of $\text{gen-reg} \, V$ see \cite[Definition 5]{BW-inhomogeneous-allen-cahn}.

\bigskip 

Therefore, we have the following

\begin{thm}\label{thm existence of CMC hypersurface}
    (Theorem 1.1 \cite{BW-inhomogeneous-allen-cahn})
    Let $N$  be a closed Riemannian manifold of dimension $n+1 \geq 3$, with positive Ricci curvature, and let $\lambda \in (0, \infty)$ be a fixed constant.
    There exists a smooth, quasi-embedded hypersurface $M \subset N$, with;
    \begin{enumerate}
        \item $\overline{M} \setminus M = \emptyset$, if $2 \leq n \leq 6$;
        \item $\overline{M} \setminus M$ is finite if $n = 7$;
        \item $\text{dim}_{\mathcal{H}}(\overline{M} \setminus M) \leq n - 7$, if $n \geq 8$. 
    \end{enumerate}
    Moreover, $M$ is the image of a two-sided immersion with mean curvature $H_{M} = \lambda \nu$, for a choice $\nu$ of continuous unit normal to the immersion.
\end{thm}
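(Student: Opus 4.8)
The plan is to realise $M$ as the ``limit interface'' of a sequence of critical points of the inhomogeneous Allen--Cahn energy $\mathcal{F}_{\varepsilon, \lambda}$ of (\ref{eqn: inhomo Allen Cahn energy}), produced by a one-parameter min-max, and then to invoke a regularity theory for such interfaces. First I would run the mountain pass scheme of Proposition \ref{prop mountain pass solution}: the two stable constant solutions $a_{\varepsilon} < b_{\varepsilon}$ (Figure \ref{figure on graph of W'}) are strict local minima of $\mathcal{F}_{\varepsilon, \lambda}$, so the class $\Gamma$ of continuous paths in $W^{1,2}(N)$ joining $a_{\varepsilon}$ to $b_{\varepsilon}$ has min-max value $\beta_{\varepsilon}$ lying strictly above $\mathcal{F}_{\varepsilon, \lambda}(a_{\varepsilon})$; a Palais--Smale/deformation argument (legitimate since $W^{1,2}$ critical points are smooth and the energy enjoys the required compactness) produces a smooth critical point $u_{\varepsilon}$ at level $\beta_{\varepsilon}$ with Morse index $\leq 1$. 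Since $\Ric_{g} > 0$ forces $a_{\varepsilon}, b_{\varepsilon}$ to be the only stable solutions (\cite[Proposition 7.1]{bellettini2020multiplicity1}), the index is in fact exactly $1$.

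Second, I would establish the uniform bounds (\ref{eqn: upper bound for energy for minmax solutions}) and (\ref{eqn: lower bound on energy for minmax solutions}). The upper bound $\sup_{N} |u_{i}| + \mu_{i}(N) \leq K$ comes from a maximum-principle $C^{0}$ estimate together with a single competitor path whose energy is controlled uniformly in $\varepsilon$ --- for instance a path obtained by transporting the one-dimensional Allen--Cahn profile along the level sets of the signed distance to a fixed smooth hypersurface. The lower bound $\mu_{i}(N) \geq L > 0$ follows from the definite gap $\beta_{\varepsilon} - \mathcal{F}_{\varepsilon, \lambda}(a_{\varepsilon}) > 0$, which prevents the interface from degenerating. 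Passing to a subsequence, $u_{i} \to u_{0}$ in $L^{1}(N)$ with $u_{0} \in BV(N; \{\pm 1\})$, and the rescaled energy measures $\mu_{i}$ converge weakly to a non-zero Radon measure $\mu$.

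Third, the convergence theorems of Hutchinson--Tonegawa \cite{Hutchinson-Tonegawa2000ConvergenceOP} and R\"{o}ger--Tonegawa \cite{roger-Tonegawa2008convergence} give that $\mu = \mu_{V}$ for an integral $n$-varifold $V$ with bounded generalised mean curvature and $\delta V = - H_{V} \mu_{V}$; that $E = \{ u_{0} = 1 \}$ is Caccioppoli with $\partial^{*} E \subseteq \text{spt} \, V$; that $\Theta(\mu_{V}, x) = 1$ and $H_{V}(x) \cdot \nu(x) = \lambda$ for $\mathcal{H}^{n}$-a.e.\ $x \in \partial^{*} E$ (with $\nu = \nabla u_{0} / |\nabla u_{0}|$); and that $\Theta(\mu_{V}, x)$ is an even integer $\geq 2$ with $H_{V}(x) = 0$ for $\mathcal{H}^{n}$-a.e.\ $x \in \text{spt} \, V \setminus \partial^{*} E$. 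That $\partial^{*} E \neq \emptyset$ --- i.e.\ the limit is not purely minimal --- is precisely where positive Ricci curvature enters: a purely-minimal limit would force $\beta_{\varepsilon}$ to be asymptotic to $2 \sigma$ times the area of a minimal hypersurface, which is incompatible with the competitor path above; this is \cite[Remark 6.7]{BW-inhomogeneous-allen-cahn}.

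The technical heart --- and the step I expect to be the main obstacle --- is upgrading this weak limit to the asserted structure: $V = V_{0} + V_{\lambda}$ with $V_{\lambda} = |\partial^{*} E| \neq \emptyset$ a smooth quasi-embedded $\lambda$-CMC hypersurface (with normal pointing into $E$), $V_{0}$ a (possibly zero) stationary integral varifold, and $V$ carrying a $(\lambda, 0)$-CMC structure, all with singular set of Hausdorff dimension at most $n - 7$. Here the Morse index bound is used in an essential way: via extensions of the stable Allen--Cahn regularity of Tonegawa \cite{Tonegawa-Stable-2005} and Tonegawa--Wickramasekera \cite{TW-stable-2010}, $V$ is, away from at most finitely many points, a smooth stable limit, and the decisive input is then the non-variational varifold regularity theorem \cite[Theorem 9.1]{BW-Stable-PMC} (equivalently \cite[Theorem 3.2]{BW-inhomogeneous-allen-cahn}), which classifies the possible local pictures and pins them down to one of the five admissible models listed. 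A Federer-type dimension-reduction argument controls the singular set in the stated dimensions, and two-sidedness of $\partial^{*} E$ is automatic, since $\nabla u_{0} / |\nabla u_{0}|$ provides a global continuous unit normal. Taking $M$ to be the quasi-embedded hypersurface underlying $V_{\lambda} = |\partial^{*} E|$ then yields the theorem.
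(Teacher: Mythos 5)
Your proposal faithfully reproduces the construction recalled in Section \ref{subsec: Allen-Cahn and CMC preliminaries}: mountain-pass min-max (Proposition \ref{prop mountain pass solution}), uniform energy bounds, Hutchinson--Tonegawa/R\"oger--Tonegawa convergence to a limit varifold, nontriviality of $\partial^{*}E$ under $\Ric_g>0$ via \cite[Remark 6.7]{BW-inhomogeneous-allen-cahn}, and the index-driven regularity upgrade through \cite{Tonegawa-Stable-2005}, \cite{TW-stable-2010} and \cite[Theorem 9.1]{BW-Stable-PMC} yielding the $(\lambda,0)$-CMC structure with the stated singular-set estimates. This is exactly the route the paper attributes to \cite[Theorem 1.1]{BW-inhomogeneous-allen-cahn}, which it cites as a black box rather than reproving, so your sketch matches the intended proof.
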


We restate Theorems \ref{thm: Main Theorem} and \ref{thm: Allen-Cahn minmax limit} with our new notation. 

\begin{thm}\label{thm: restatement of Allen-Cahn minmax limit}
    Consider a closed Riemannian manifold $(N, g)$, with positive Ricci curvature and $\text{dim} \, N = n + 1 \geq 3$.
    Take $\lambda \in (0, \infty)$.
    The limiting varifold $V = V_{\lambda} + V_{0}$ from Section \ref{subsec: Allen-Cahn and CMC preliminaries} has the following properties
    \begin{enumerate}
        \item $M \coloneqq \text{gen-reg} \, V_{\lambda}$ is embedded, connected and has index 1. 
        \item $V_{0} = 0$.
    \end{enumerate}
\end{thm}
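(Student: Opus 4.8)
The plan is to prove Theorem \ref{thm: restatement of Allen-Cahn minmax limit} in two stages, corresponding to the two conclusions, using the min-max characterisation of $\beta_\varepsilon$ as an obstruction: if we can produce, for each small $\varepsilon$, an admissible path in $\Gamma$ whose maximal $\mathcal{F}_{\varepsilon,\lambda}$-value is too small, we contradict $\beta_\varepsilon = \inf_\gamma \sup \mathcal{F}_{\varepsilon,\lambda}$ being the critical value realised by $u_\varepsilon$. Concretely, the energy of the limit interface $V = V_\lambda + V_0$ is computed from $\mu(N) = \lim \mu_i(N)$, which by the min-max construction equals $(2\sigma)^{-1}$ times (essentially) $\lim \beta_{\varepsilon_i}$ up to the linear term; so $\|V\|(N) = \|V_\lambda\|(N) + \|V_0\|(N)$ with $\|V_0\|(N) \geq 2\mathcal{H}^n(\text{supp}\|V_0\|)$ if $V_0 \neq 0$ (even multiplicity). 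The strategy is to build a \emph{single} comparison path, associated to the already-known regular hypersurface $M_\lambda = \text{gen-reg}\,V_\lambda$, whose energy profile peaks (at $\varepsilon$-level, in the $\varepsilon\to 0$ limit) at exactly the $\lambda$-isoperimetric-type energy of $M_\lambda$ alone — with no room for an extra minimal piece. This forces $\|V_0\|(N) = 0$.

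The core construction is the ``sliding'' path sketched in the introduction. First I would set up the Abstract Cylinder: remove from $N$ the closed $n$-rectifiable ``problematic set'' (the singular set $\overline{M_\lambda}\setminus M_\lambda$ together with the quasi-embedded/tangency locus), and identify the open complement with an open subset $\mathcal{C}$ of $\tilde M \times \mathbb{R}$, where $\tilde M$ is the abstract domain of the immersion onto $M_\lambda$, with coordinate $s$ the signed distance to $M_\lambda$ and metric pulled back from $N$. On the full product $\tilde M \times \mathbb{R}$, define $\gamma_\varepsilon(t)(y,s) = \bar{\mathbb{H}}\big((s - T(t))/\varepsilon\big)$ for a suitable reparametrisation $T : [-1,1] \to \mathbb{R}$ sweeping from $-\infty$ to $+\infty$ (cut off so the endpoints are the constants $a_\varepsilon, b_\varepsilon$), where $\bar{\mathbb{H}}$ is the one-dimensional Allen--Cahn heteroclinic. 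Restrict to $\mathcal{C}$, push forward to $N$ (the problematic set has measure zero, so $W^{1,2}$-continuity is unaffected), and check the path is continuous in $W^{1,2}(N)$ and admissible. The key energy computation — performed on $\mathcal{C}$, i.e. on $\tilde M \times \mathbb{R}$ — uses the coarea/Fubini structure: $\mathcal{F}_{\varepsilon,\lambda}(\gamma_\varepsilon(t)) \approx 2\sigma \int_{\{s = T(t)\}} 1 \, - \, \sigma\lambda \int (\text{signed volume})$, and differentiating in $t$ produces the first variation of area of the level set $\{s = T(t)\}$ minus the $\lambda$-weighted volume derivative, i.e. the $\lambda$-CMC functional's derivative. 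The profile is stationary at $t$ with $T(t) = 0$ (where the level set is $M_\lambda$ itself, which is $\lambda$-CMC), and the \emph{second} derivative there involves $-\int_{M_\lambda}(|A|^2 + \Ric(\nu,\nu))$ which is strictly negative by $\Ric_g > 0$ — so $T(t)=0$ is a strict local max, and one checks it is the global max along the path. Hence $\sup_t \mathcal{F}_{\varepsilon,\lambda}(\gamma_\varepsilon(t)) \to 2\sigma \mathcal{H}^n(M_\lambda) - \sigma\lambda \,\text{vol}(E_\lambda)$, which equals $\|V_\lambda\|(N)$ up to the constants; since $\beta_{\varepsilon}$ is bounded above by this, $\|V_0\|(N)=0$, so $V_0 = 0$.

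For conclusion (1), embeddedness: assuming for contradiction that $M_\lambda$ has a genuine (tangential, one-sided) self-touching point $p$ — the only non-embedded possibility left by \cite{BW-inhomogeneous-allen-cahn} — I would edit the above path near its maximum. Near $p$, $M_\lambda$ looks like two graphs touching at a point; the idea is to replace the ``flat'' profile near $t = 0$ by a family of hypersurfaces that detach/smooth the tangency (a local deformation of $M_\lambda$ reducing its area to first order, or keeping area but strictly decreasing the $\lambda$-weighted functional, exploiting that the two sheets at a tangential one-sided contact can be pushed apart at a strict energy gain — this is where the quantitative estimate must be extracted, e.g. via a strong maximum principle / Hopf-lemma-type strict inequality at the contact point). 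This yields a modified path whose maximal energy is \emph{strictly} below $\|V_\lambda\|(N) = 2\sigma\beta$-limit, contradicting the min-max value. Connectedness and index $1$ then follow (the latter via \cite{Mantoulidis_2022} or \cite{Fritz-index-paper}, as stated).

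The main obstacle I anticipate is twofold and both parts are geometric-analytic rather than formal: (a) rigorously setting up the Abstract Cylinder diffeomorphism and verifying $W^{1,2}(N)$-continuity of the sliding path \emph{across} the problematic $n$-rectifiable set — one must ensure the restricted-then-pushed-forward functions genuinely lie in $W^{1,2}(N)$ with no distributional contribution on the removed set, which requires that set to be not just measure-zero but ``$W^{1,2}$-removable'' (capacity/codimension considerations, using that it is $n$-rectifiable in an $(n{+}1)$-manifold); and (b) in the embeddedness step, producing the \emph{strict} energy decrease from the local deformation at the tangential contact point in a way that is uniform in $\varepsilon$ and compatible with gluing back into the global path. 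The second-variation sign from $\Ric_g>0$ is the clean conceptual heart, but making the comparison path legitimately admissible (continuous, correct endpoints, measurable families of hypersurfaces with controlled energy) is where the real work lies.
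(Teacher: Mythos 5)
Your overall scheme is the same as the paper's: a sliding path of one–dimensional Allen--Cahn profiles placed normal to the level sets $\Gamma_t$ of the signed distance, set up on the Abstract Cylinder $\tilde M \times \mathbb{R}$ so that the measure-zero problematic set is removed, pushed back to $N$, and used as a min-max competitor. The embeddedness step (editing the path near its max assuming a tangential self-touching) is also aligned with the paper's strategy. However, there is a genuine gap in the mechanism you invoke to conclude the path's maximum is at $t=0$. You argue via the \emph{second} variation of $t \mapsto \mathcal{F}_{\lambda}(E_t)$ at $t=0$, identifying it with $-\int_{M}\left(|A_M|^2 + \Ric(\nu,\nu)\right) < 0$, and then assert ``one checks it is the global max.'' A negative second derivative at a critical point only gives a \emph{strict local} maximum and is entirely compatible with $\mathcal{F}_\lambda(E_t)$ dipping and then exceeding $\mathcal{F}_\lambda(E)$ at some larger $|t|$, in which case the path would \emph{not} have max energy $A_2$ and the min-max obstruction would fail. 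The ``one checks'' step is precisely the crux, and it requires a global, not infinitesimal, input. What the paper actually uses is the Riccati inequality $\partial_t H_t = |A_{\Gamma_t}|^2 + \Ric(\nabla p,\nabla p) \ge m = \min_{|X|=1}\Ric(X,X) > 0$, valid at \emph{every} $(x,t)$ in the cylinder (Proposition~\ref{prop: derivative of H t is negative}), integrated to give $H_{\Gamma_t} \ge \lambda + mt$ for $t\ge 0$ (and $\le \lambda + mt$ for $t\le 0$); this forces $\frac{d}{dt}\mathcal{F}_\lambda(E_t) = \int_{\Gamma_t}(\lambda - H_{\Gamma_t}) \le 0$ for all $t>0$ (and $\ge 0$ for all $t<0$), which is the global monotonicity you need. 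The two statements are related (the second variation at $t=0$ is exactly $-\int_M \partial_t H_t|_{t=0}$), but only the global pointwise Riccati bound yields the global comparison.

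A smaller remark on the embeddedness step: your sketch invokes a Hopf-lemma-type strict inequality at the contact point to produce a strict energy decrease. That is not the mechanism in the paper, and it is not clear how a Hopf argument would give an $\varepsilon$-uniform, quantitative energy drop compatible with reattaching to the endpoint paths. The paper instead pushes the two tangential disks together by a fixed amount $\rho$ on a fixed geodesic ball about $z_0$ and \emph{deletes} the overlap; this removes a fixed area $\mathcal{H}^n(B_l\cap M)$ and (after the multi-stage reconnection to $a_\varepsilon$ and $b_\varepsilon$ detailed in Sections~\ref{sec: Path From Disks to Crab}--\ref{sec: Pieceing Path Together}) yields an explicit $\varsigma>0$ by which every admissible path value drops below $A_2$. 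Your instinct that the reconnection is the hard part is correct; the explicit area-deletion construction is what makes that quantitative.
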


This says that only case \ref{item: single embedded CMC disk} can occur.

\subsection{One Dimensional Allen--Cahn Solution}\label{subsec: one dim allen cahn solution}

We refer to \cite[Section 2.2]{bellettini2020multiplicity1} as a reference for this section. 

\bigskip 

We define the function $\mathbb{H}$ on $\mathbb{R}$ to denote the monotonically increasing solution to the ODE $u'' - W'(u) = 0$, with the conditions $\mathbb{H} (0) = 0$ and $\lim_{t \rightarrow \pm \infty} \mathbb{H} (t) = \pm 1$.
We then define $\mathbb{H}_{\varepsilon} (\, \cdot \,) = \mathbb{H} (\varepsilon^{-1} \, \cdot)$, which solves the ODE $\varepsilon u'' - \varepsilon^{-1} W'(u) = 0$. 

\bigskip 

We define an approximation for $\mathbb{H}_{\varepsilon}$. 
Start by considering the following bump function 
\begin{equation*}
    \begin{cases}
        \chi \in C_{c}^{\infty} (\mathbb{R}), \\
        \chi (t) = 1, & t \in (-1,1), \\
        \chi (t) = 0, & t \in \mathbb{R} \setminus (-2,2), \\
        \chi(t) = \chi (-t), & t \in \mathbb{R}, \\
        \chi'(t) \leq 0, & t \geq 0. \\
    \end{cases}
\end{equation*}
For $\varepsilon \in (0, 1)$, we define the truncation of $\mathbb{H}_{\varepsilon}$ by 
\begin{equation*}
    \overHe_{\varepsilon} (t) \coloneqq \begin{cases}
        \chi ((\varepsilon \Lambda)^{-1} t) \mathbb{H}_{\varepsilon} (t) + 1 - \chi ((\varepsilon \Lambda)^{-1} t), & t > 0, \\
        \chi ((\varepsilon \Lambda)^{-1} t) \mathbb{H}_{\varepsilon} (t) - 1 + \chi ((\varepsilon \Lambda)^{-1} t), & t < 0,
    \end{cases}
\end{equation*}
where $\Lambda = 3 |\log \varepsilon|$.
There exists a constant $\beta = \beta (W) < + \infty$, such that for all $\varepsilon \in (0,1/4)$, 
\begin{equation*}
    2 \sigma - \beta \varepsilon^{2} < \int_{\mathbb{R}} \frac{\varepsilon}{2} |(\overHe_{\varepsilon})' (t)|^{2} + \frac{W(\overHe_{\varepsilon} (t))}{\varepsilon} \, dt < 2 \sigma + \beta \varepsilon^{2}.
\end{equation*}

\section{Idea of Proof} 

We first prove Theorem \ref{thm: Main Theorem} for the case $\lambda > 0$. 
To then prove for $\lambda < 0$, we take $\tilde{\lambda} = - \lambda > 0$, and reverse the direction of the unit normal on the resulting $\tilde{\lambda}$-CMC hypersurface. 
From here on we take $\lambda > 0$.

\bigskip 

For Caccioppoli sets $\Omega \subset N$, we define the following functional, 
\begin{equation*}
    \mathcal{F}_{\lambda} (\Omega) \coloneqq \mathcal{H}^{n} (\partial^{*} \Omega) - \lambda \mu_{g} (\Omega).
\end{equation*}
Recall our converging sequence of critical points $\{u_{\varepsilon_{j}}\}$, along with our limiting varifold $V = V_{\lambda} + V_{0}$, and Caccioppoli set $E$ from Section \ref{subsec: Allen-Cahn and CMC preliminaries}.
We have, as $\varepsilon_{j} \rightarrow 0$, 
\begin{equation*}
    \mathcal{F}_{\varepsilon_{j}, \lambda} (u_{\varepsilon_{j}}) \rightarrow 2 \sigma \mathcal{F}_{\lambda} (E) + 2 \sigma \mathbb{M} (V_{0}) + \sigma \lambda \mu_{g} (N)
\end{equation*}
Therefore, constructing minimising paths between $\emptyset$ and $N$ for $\mathcal{F}_{\lambda}$, may provide insight to minimising paths from $a_{\varepsilon}$ to $b_{\varepsilon}$ for $\mathcal{F}_{\varepsilon, \lambda}$.

\bigskip 

As $N$ is compact, one obvious path that includes $E$, is $\{ E_{t} \}$ for $t \in [-2 \, \text{diam} (N), 2 \, \text{diam} (N)]$, where, 
\begin{equation*}
    E_{t} \coloneqq \{ y \colon \tilde{d} (y) > t\}.
\end{equation*}
Here $\tilde{d}$ is the signed distance function to $M \coloneqq \partial^{*} E$, taking positive values in $E$, and negative values in $N \setminus E$. 
We also denote, 
\begin{equation*}
    \Gamma_{t} \coloneqq \{ y \colon \tilde{d} (y) = t \} = \partial E_{t}.
\end{equation*}
Assuming sufficient regularity on the sets $\Gamma_{t}$ and $E_{t}$, and the functions $t \mapsto \mathcal{H}^{n} (\Gamma_{t})$ and $t \mapsto \mu_{g} (E_{t})$, we have for $t > 0$, 
\begin{equation}\label{eqn: F lambda E t minus F lambda E}
    \begin{split} 
    \mathcal{F}_{\lambda} (E_{t}) - \mathcal{F}_{\lambda} (E) &= \int_{0}^{t} \frac{d}{ds} \mathcal{H}^{n} (\Gamma_{s}) \, ds - \lambda \int_{0}^{t} \frac{d}{ds} \mu_{g} (E_{s}) \, ds, \\
    &= \int_{0}^{t} \int_{\Gamma_{s}} \lambda - H_{\Gamma_{s}} (x) \, d \mathcal{H}^{n} (x) \, ds,
    \end{split}
\end{equation}
where $H_{\Gamma_{s}}$ is the scalar mean curvature of $\Gamma_{s}$ with respect to unit normal $\nabla \tilde{d}$.
Recalling that $H_{\Gamma_{0}} = \lambda$, a straightforward calculation yields the following inequalities.
\begin{equation*}
    \begin{cases}
        H_{\Gamma_{t}} \geq \lambda + m t, & t \geq 0, \\
        H_{\Gamma_{t}} \leq \lambda + m t, & t \leq 0,
    \end{cases} 
\end{equation*}
where $m = \min_{|X| = 1} \, Ric_{g} (X, X) > 0$. 
Therefore, by (\ref{eqn: F lambda E t minus F lambda E}) for $t \geq 0$, 
\begin{equation*}
    \mathcal{F}_{\lambda} (E_{t}) \leq \mathcal{F}_{\lambda} (E).
\end{equation*}
The same inequality holds for $t \leq 0$. 
Here we see the importance of the assumption on the Ricci curvature.
Therefore,
\begin{equation*}
    \gamma \colon t \in [-2 \, \text{diam} (N), 2 \, \text{diam} (N)] \mapsto E_{-t} \in \{ \text{Caccioppoli sets of} \, N \},
\end{equation*}
is a path from $\emptyset$ to $N$, that has maximum height $\mathcal{F}_{\lambda} (E)$. 

\bigskip 

We look to replicate this path in $W^{1,2} (N)$.
Consider the Lipschitz function on $N$, 
\begin{equation*}
    v_{\varepsilon}^{t} = \overHe_{\varepsilon} (\tilde{d} (x) - t),
\end{equation*} 
which can be thought of as placing the truncated one dimensional Allen-Cahn solution from Section \ref{subsec: one dim allen cahn solution} along the normal profile of $\Gamma_{t}$.
By the Co-Area formula, we have, 
\begin{equation*}
    \mathcal{F}_{\varepsilon, \lambda} (v_{\varepsilon}^{t}) = \int_{\mathbb{R}} Q_{\varepsilon} (s - t) \mathcal{H}^{n} (\Gamma_{s}) \, ds - \sigma \lambda \int_{\mathbb{R}} \overHe_{\varepsilon} (s - t) \mathcal{H}^{n} (\Gamma_{s}) \, ds, 
\end{equation*}
were, 
\begin{equation*}
    Q_{\varepsilon} (t) = \frac{\varepsilon}{2} | \left( \overHe_{\varepsilon} \right)' (t) |^{2} + \frac{W\left( \overHe_{\varepsilon} (t) \right)}{\varepsilon}
\end{equation*}
The functions
\begin{equation*}
    t \mapsto \int_{\mathbb{R}} Q_{\varepsilon} (s - t) \mathcal{H}^{n} (\Gamma_{s}) \, ds, \hspace{0.5cm} and \hspace{0.5cm} t \mapsto \sigma \lambda \int_{\mathbb{R}} \overHe_{\varepsilon} (s - t) \mathcal{H}^{n} (\Gamma_{s}) \, ds,
\end{equation*}
act as smooth approximations to $t \mapsto 2 \sigma \mathcal{H}^{n} (\Gamma_{t})$, and $t \mapsto 2 \sigma \lambda \mu_{g} (E_{t}) - \sigma \lambda \mu_{g} (N)$, respectively.

\bigskip 

We say that $v_{\varepsilon}^{0}$ is an Allen--Cahn approximation of $M$ as, 
\begin{equation*}
    \mathcal{F}_{\varepsilon, \lambda} (v_{\varepsilon}^{0}) \rightarrow 2 \sigma \mathcal{H}^{n} (M) - \sigma \lambda \mu_{g} (E) + \sigma \lambda \mu_{g} (N \setminus E) \eqqcolon A_{2},
\end{equation*}
as $\varepsilon \rightarrow 0$, Section \ref{subsec: Approximating function for CMC}.
Carrying out a calculation which replicates the previous one, we deduce that for all $\tau > 0$, there exists an $\varepsilon_{\tau} > 0$, such that for all $\varepsilon \in (0, \varepsilon_{\tau})$, 
\begin{equation*}
    \max_{t \in [- 2 \, \text{diam} (N), 2 \, \text{diam} (N)]}\mathcal{F}_{\varepsilon, \lambda} (v_{\varepsilon}^{t}) < A_{2} + \tau = A_{1} - 2 \sigma \mathbb{M} (V_{0}) + \tau,
\end{equation*}
where $A_{1} \coloneqq 2 \sigma \mathcal{H}^{n} (M) + 2 \sigma \mathbb{M} (V_{0}) - \sigma \mu_{g} (E) + \sigma \mu_{g} (N \setminus E)$.
Connecting $v_{\varepsilon}^{2 \, \text{diam}(N)} = -1$ to $a_{\varepsilon}$, and $v_{\varepsilon}^{- 2 \, \text{diam} (N)} = + 1$ to $b_{\varepsilon}$, by constant functions, we see that we have an appropriate min-max path in $W^{1,2} (N)$.

\bigskip 

This path proves that we cannot have a minimal piece $V_{0}$. 
We also get criterion for $M$. 
Indeed, as there exists a 'Wall', \cite[Lemma 5.1]{BW-inhomogeneous-allen-cahn}, that all min-max paths must climb over, we have that
\begin{equation*}
    2 \sigma \lambda \mathcal{H}^{n} (M) - \sigma \lambda \mu_{g} (E) + \sigma \lambda \mu_{g} (N \setminus E) > \sigma \lambda \mu_{g} (N).
\end{equation*}
Rearranging yields, 
\begin{equation*}
    \mathcal{H}^{n} (M) > \lambda \mu_{g} (E).
\end{equation*}
We note that the above path can be constructed for any suitable $\lambda$-CMC hypersurface which encloses a volume. 
Therefore, for any such pair $(M, E)$, the above inequality holds, and our min-max must choose the pair that minimises the positive quantity $\mathcal{H}^{n} (M) - \lambda \mu_{g} (E)$.
From this, we can deduce that $E$ must be connected.

\bigskip 

We turn our attention to proving that $M$ is embedded.
We prove by contradiction, exploiting the min-max characterisation of $M$.
We now know that, given our sequence of critical points $\{ u_{\varepsilon_{j}} \}$, and potentially after taking a subsequence, 
\begin{equation*}
    \mathcal{F}_{\varepsilon_{j}, \lambda} (u_{\varepsilon_{j}}) \rightarrow 2 \sigma \mathcal{H}^{n} (M) - \sigma \lambda \mu_{g} (E) + \sigma \lambda \mu_{g} (N \setminus E) = A_{2},
\end{equation*}
as $\varepsilon_{j} \rightarrow 0$.
Assume that $M$ has a non-embedded point $z_{0}$. 
Then for every $\varepsilon_{j} > 0$, we construct a continuous path, 
\begin{equation*}
    \gamma_{\varepsilon_{j}} \colon [-1,1] \rightarrow W^{1,2} (N),
\end{equation*}
where, $\gamma_{\varepsilon_{j}} (-1) = a_{\varepsilon_{j}}$, and $\gamma_{\varepsilon_{j}} (1) = b_{\varepsilon_{j}}$. 
This path satisfies the following, there exists a $J$ in $\mathbb{N}$, and $\varsigma > 0$, independent of $j$, such that for all $j \geq J$,  
\begin{equation*}
    \max_{t \in [-1,1]} \mathcal{F}_{\varepsilon_{j}, \lambda} (\gamma_{\varepsilon_{j}} (t)) < 2 \sigma \mathcal{H}^{n} (M) - \sigma \lambda \mu_{g} (E) + \sigma \lambda \mu_{g} (N \setminus E) - \varsigma,
\end{equation*}
This is a contradiction of the min-max characterisation of $u_{\varepsilon_{j}}$. 

\bigskip 

We sketch the main ideas of the path in the $\varepsilon$-limit, Figure \ref{fig: The Path}.

\bigskip 

The picture at $z_{0}$ is Figure \ref{subfig: point 1 of path, Disks}. 
The limiting energy for this structure is $A_{2}$.
The starting point for building this path is to construct a competitor with lower limiting energy. 
Then we wish to connect this competitor to $+1$ and $-1$, with energy always remaining a fixed amount below $A_{2}$. 

\bigskip 

\textbf{\textit{Step 1}:} Construction of Competitor, (1) $\rightarrow$ (2) in Figure \ref{fig: The Path}, Section \ref{sec: Path From Disks to Crab}

\bigskip 

The structure at $z_{0}$ is two smooth, embedded CMC disks, that touch tangentially at $z_{0}$ and lie either side of each other. 
To construct the competitor, we push these disks together, and delete portions of the disks that are pushed past each other. 
This reduces the area of our structure while also increasing the size of $E$, leading to a drop in energy. 

\bigskip 

\textbf{\textit{Idea 1}:} Push the whole of $M$ by some fixed distance $\rho$. 

\bigskip 

This equates to pushing $M$ to the level $\Gamma_{- \rho}$. 
As seen previously, this will lead to a drop in energy. 
Furthermore, there is an obvious path to $+ 1$, namely we keep pushing along level sets, $\Gamma_{- r}$ for $r$ in $[\rho, 2 \, \text{diam} (N)]$.
However, there is no obvious path to $-1$. 
Pushing $\Gamma_{- \rho}$ in the direction of $E$, will increase the energy and bring us back to $M$, undoing the energy drop that the competitor created. 

\bigskip 

\textbf{\textit{Idea 2}:} Push the disks together locally. 

\bigskip 

Consider open balls $B_{1} \subset \subset B_{2}$ about $z_{0}$. 
We smoothly bump the disks at $z_{0}$ such that inside $B_{1}$ we move the disks of distance $\rho > 0$, and outside $B_{2}$ we remain fixed. 
The balls $B_{1}$ and, $B_{2}$ along with $\rho$, are chosen so that the area inside $B_{1}$ gets deleted, Figure \ref{subfig: point 2 of path, Crab}.
Letting, 
\begin{equation*}
    \varsigma = \frac{\sigma}{2} \mathcal{H}^{n} (B_{1} \cap M),
\end{equation*}
we see that our competitor has energy lying below, $A_{2} - \varsigma$.  

\bigskip

\textbf{\textit{Step 2}:} Path to +1, Section \ref{sec: path to plus 1}

\bigskip

To connect to the competitor, $+ 1$ we look to copy the successful path to $+ 1$ of the competitor in \textbf{\textit{Idea 1}}. 
To construct the competitor, we only edited $M$ locally about $z_{0}$. 
Therefore, pushing the competitor to the level set $\Gamma_{- \rho}$ will correspond to a similar drop in energy from pushing $M$ to $\Gamma_{- \rho}$. 
This is (2) $\rightarrow$ (6) in Figure \ref{fig: The Path}.
See Figures \ref{subfig: point 2 of path, Crab} and \ref{subfig: point 6 of path} for local pictures about $z_{0}$.
From $\Gamma_{- \rho}$ we can easily connect to $+ 1$ by pushing along level sets $\Gamma_{- r}$, as previously discussed.

\bigskip

\textbf{\textit{Step 3}:} Path to $-1$, Section \ref{sec: path to minus 1}

\bigskip 

We look to follow a similar method as in \textbf{\textit{Step 2}} by connecting our competitor to a level set $\Gamma_{r_{0}}$, for $r_{0} >0$, then push this along level sets $\Gamma_{r}$ for $r$ in $[r_{0}, 2 \, \text{diam} (N)]$ to connect it to $-1$.
By pushing our competitor straight to $\Gamma_{r_{0}}$ we run the risk of pushing through $M$ and increasing our energy back up to $A_{2}$. 
Therefore, we carry out our path in stages, again making use of the fact that our edit about $z_{0}$ was local. 

\bigskip 

The first stage is (2) $\rightarrow$ (3) in Figure \ref{fig: The Path}.
We fix our competitor in $B_{2}$, and outside we push forward, so that outside some larger ball $B_{3}$, we line up with $\Gamma_{r_{0}}$. 
See Figures \ref{subfig: point 2 of path, Crab} and \ref{subfig: point 3 of path} for local pictures about $z_{0}$.
Again, as our edit is local about $z_{0}$, this corresponds to a similar drop in energy of pushing $M$ to $\Gamma_{r_{0}}$, and the drop will be of order $r_{0}^{2}$. 
For a large enough $r_{0}$ this will give us a large enough energy drop to be able to undo the edit inside $B_{2}$, and still have our energy remain below $A_{2} - \varsigma$.
This is the second stage from (3) $\rightarrow$ (4), in Figure \ref{fig: The Path}.
See Figure \ref{subfig: point 4 of path}, for local picture about non-embedded point.
From here we push up inside $B_{3}$ to line up with $\Gamma_{r_{0}}$, (4) $\rightarrow$ (5) in Figure \ref{fig: The Path}, Figure \ref{subfig: point 5 of path}. 
Finally, we connect to $- 1$ by sliding along level sets as previously stated. 

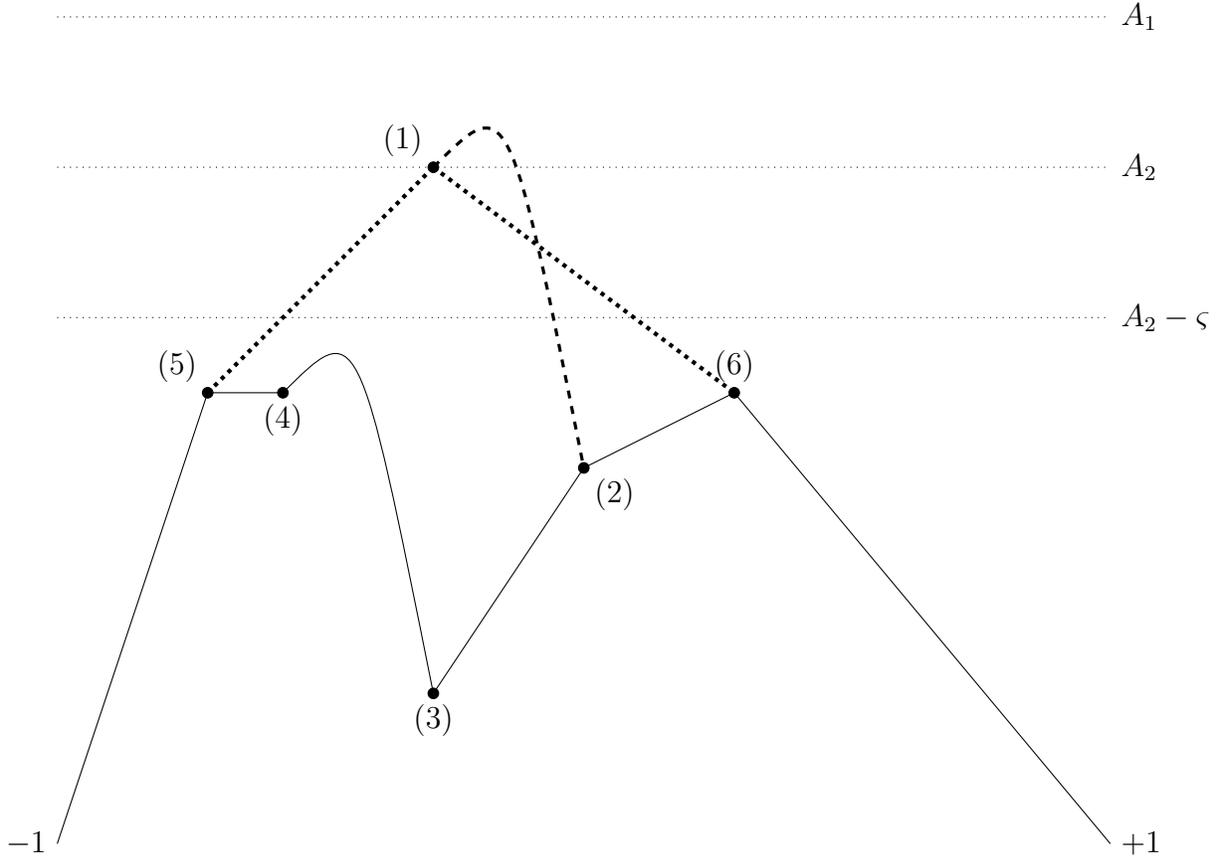
\begin{figure}
    \begin{center}
        \begin{tikzpicture}
            \draw[dotted] (-7,6) -- (7,6) node[anchor=west]{$A_{1}$}; 
            \draw[dotted] (-7,4) -- (7,4) node[anchor=west]{$A_{2}$}; 
            \draw[dotted] (-7,2) -- (7,2) node[anchor=west]{$A_{2} - \varsigma$};
            \filldraw[black] (-2,4) circle (2pt) node[above left]{(1)};
            \draw[dashed, very thick] (-2,4) .. controls (-1,5) .. (0,0);
            \filldraw[black] (0,0) circle (2pt) node[below right]{(2)};
            \draw (0,0) -- (-2,-3); 
            \filldraw[black] (-2,-3) circle (2pt) node[below]{(3)};
            \draw (-2, - 3) .. controls (-3, 2) .. (-4,1);
            \filldraw[black] (-4,1) circle (2pt) node[below]{(4)};
            \draw (-4, 1) -- (-5, 1); 
            \filldraw[black] (-5,1) circle (2pt) node[above left]{(5)};
            \draw (-5,1) -- (-7, -5);
            \node[left] at (-7,-5) {$- 1$};
            \draw (0,0) -- (2,1); 
            \filldraw[black] (2,1) circle (2pt) node[above]{(6)};
            \draw (2,1) -- (7, -5);
            \node[right] at (7, -5) {$+ 1$};
            \draw[dotted, ultra thick] (-5,1) -- (-2, 4); 
            \draw[dotted, ultra thick] (-2,4) -- (2,1);
        \end{tikzpicture}
        \caption{The Paths.
        To prove $V_{0} = 0$, we follow the path from $-1$ to (5), then the dotted line to (1), dotted line to (6), then complete the path to $+1$.
        The dashed line from (1) to (2) is the construction of the competitor.
        Then, to prove that $M$ is embedded, we follow the path from $-1$ to $+1$ given by the solid lines.
        Refer to Figure \ref{fig: Local pictures about non-emebedded point in the Path} for the local picture about the non-embedded point $z_{0}$ at each numbered stage on the paths.}
        \label{fig: The Path}
    \end{center}
    \end{figure}
    
    \bigskip
    
    \begin{figure}
        \begin{center}
            \begin{subfigure}{0.4\textwidth}
                \begin{center}
                {
                \begin{tikzpicture}[scale=1.5]
                    \draw plot [smooth] coordinates {(-2,2) (-1,0.5) (0,0) (1,0.5) (2,2)};
                    \draw plot [smooth] coordinates {(-2,-2) (-1,-0.5) (0,0) (1,-0.5) (2,-2)};
                    \node at (0,1) {$+1$};
                    \node at (0,-1) {$+1$};
                    \node[left] at (-1,0) {$-1$};
                    \node[right] at (1,0) {$-1$};
                \end{tikzpicture}
                }
                \caption{(1): Non-embedded point $z_{0}$}
                \label{subfig: point 1 of path, Disks}
            \end{center}
                
            \end{subfigure}
            \hspace{2cm}
            \begin{subfigure}{0.4\textwidth}
                \begin{center}
                \begin{tikzpicture}[scale=1.5]
                    \draw plot [smooth] coordinates {(-2,2) (-1, 0.5) (-0.5, 0)};
                    \draw plot [smooth] coordinates {(0.5,0) (1,0.5) (2,2)};
                    \draw plot [smooth] coordinates {(-2,-2) (-1,-0.5) (-0.5, 0)}; 
                    \draw plot [smooth] coordinates {(0.5,0) (1,-0.5) (2,-2)};
                    \draw [dashed] plot [smooth] coordinates {(-1,0.5) (0,0) (1,0.5)};
                    \draw [dashed] plot [smooth] coordinates {(-1,-0.5) (0,0) (1,-0.5)};
                    \node at (0,1) {$+1$};
                    \node at (0,-1) {$+1$};
                    \node[left] at (-1,0) {$-1$};
                    \node[right] at (1,0) {$-1$};
                \end{tikzpicture}
                \caption{(2): Competitor}
                \label{subfig: point 2 of path, Crab}
            \end{center}
            \end{subfigure}
    
            \hfill 
    
            \begin{subfigure}{0.4\textwidth}
                \begin{center}
                \begin{tikzpicture}[scale=1.5]
                    \draw plot [smooth] coordinates {(-1.5,2) (-1, 0.5) (-0.5, 0)};
                    \draw plot [smooth] coordinates {(0.5,0) (1,0.5) (1.5,2)};
                    \draw plot [smooth] coordinates {(-1.5,-2) (-1,-0.5) (-0.5, 0)}; 
                    \draw plot [smooth] coordinates {(0.5,0) (1,-0.5) (1.5,-2)};
                    \draw [dashed] plot [smooth] coordinates {(-2,2) (-1,0.5) (0,0) (1,0.5) (2,2)};
                    \draw [dashed] plot [smooth] coordinates {(-2,-2) (-1,-0.5) (0,0) (1,-0.5) (2,-2)};
                    \node at (0,1) {$+1$};
                    \node at (0,-1) {$+1$};
                    \node[left] at (-1,0) {$-1$};
                    \node[right] at (1,0) {$-1$};
                \end{tikzpicture}
                \caption{(3): Move competitor to $\Gamma_{r_{0}}$ outside ball $B_{3}$ centred at $z_{0}$.}
                \label{subfig: point 3 of path}
            \end{center}
            \end{subfigure}
            \hspace{2cm}
        \begin{subfigure}{0.4\textwidth}
            \begin{center}
                \begin{tikzpicture}[scale=1.5]
                    \draw plot [smooth] coordinates {(-1.5,2) (-1,0.5) (0,0) (1,0.5) (1.5,2)};
                    \draw plot [smooth] coordinates {(-1.5,-2) (-1,-0.5) (0,0) (1,-0.5) (1.5,-2)};
                    \draw [dashed] plot [smooth] coordinates {(-2,2) (-1,0.5) (0,0) (1,0.5) (2,2)};
                    \draw [dashed] plot [smooth] coordinates {(-2,-2) (-1,-0.5) (0,0) (1,-0.5) (2,-2)};
                    \node at (0,1) {$+1$};
                    \node at (0,-1) {$+1$};
                    \node[left] at (-1,0) {$-1$};
                    \node[right] at (1,0) {$-1$};
                \end{tikzpicture}
                \caption{(4): Undo the edit inside $B_{2}$.}
                \label{subfig: point 4 of path}
            \end{center}
        \end{subfigure}
    
        \hfill 
    
        \begin{subfigure}{0.4\textwidth}
            \begin{center}
                \begin{tikzpicture}[scale=1.5]
                    \draw plot [smooth] coordinates {(-1.5,2) (-1,1) (0,0.5) (1,1) (1.5,2)};
                    \draw plot [smooth] coordinates {(-1.5,-2) (-1,-1) (0,-0.5) (1,-1) (1.5,-2)};
                    \draw [dashed] plot [smooth] coordinates {(-2,2) (-1,0.5) (0,0) (1,0.5) (2,2)};
                    \draw [dashed] plot [smooth] coordinates {(-2,-2) (-1,-0.5) (0,0) (1,-0.5) (2,-2)};
                    \node[above] at (0,1) {$+1$};
                    \node[below] at (0,-1) {$+1$};
                    \node[left] at (-1,0) {$-1$};
                    \node[right] at (1,0) {$-1$};
                \end{tikzpicture}
                \caption{(5): Push up in $B_{3}$ to come into line with $\Gamma_{r_{0}}$.}
                \label{subfig: point 5 of path}
            \end{center}
        \end{subfigure}
        \hspace{2cm}
        \begin{subfigure}{0.4\textwidth}
            \begin{center}
                \begin{tikzpicture}[scale=1.5]
                    \draw plot [smooth] coordinates {(-2,1.5) (-1, 0.25) (-0.5, 0)};
                    \draw plot [smooth] coordinates {(0.5,0) (1,0.25) (2,1.5)};
                    \draw plot [smooth] coordinates {(-2,-1.5) (-1,-0.25) (-0.5, 0)}; 
                    \draw plot [smooth] coordinates {(0.5,0) (1,-0.25) (2,-1.5)};
                    \draw [dashed] plot [smooth] coordinates {(-2,2) (-1,0.5) (0,0) (1,0.5) (2,2)};
                    \draw [dashed] plot [smooth] coordinates {(-2,-2) (-1,-0.5) (0,0) (1,-0.5) (2,-2)};
                    \node at (0,1) {$+1$};
                    \node at (0,-1) {$+1$};
                    \node[left] at (-1,0) {$-1$};
                    \node[right] at (1,0) {$-1$};
                \end{tikzpicture}
                \caption{(6): Push Competitor to come in line with $\Gamma_{- \rho}$.}
                \label{subfig: point 6 of path}
            \end{center}
        \end{subfigure}
        \caption{Stages of the Path at the non-embedded point $z_{0}$. 
        In each image, the dashed lines represent the original $\lambda$-CMC disks, as a reference to what we are changing at each step.
        Furthermore, in each image, it is the solid lines that are the boundaries between the '$+1$' and '$-1$' regions.}
        \label{fig: Local pictures about non-emebedded point in the Path}
    \end{center}
    \end{figure}
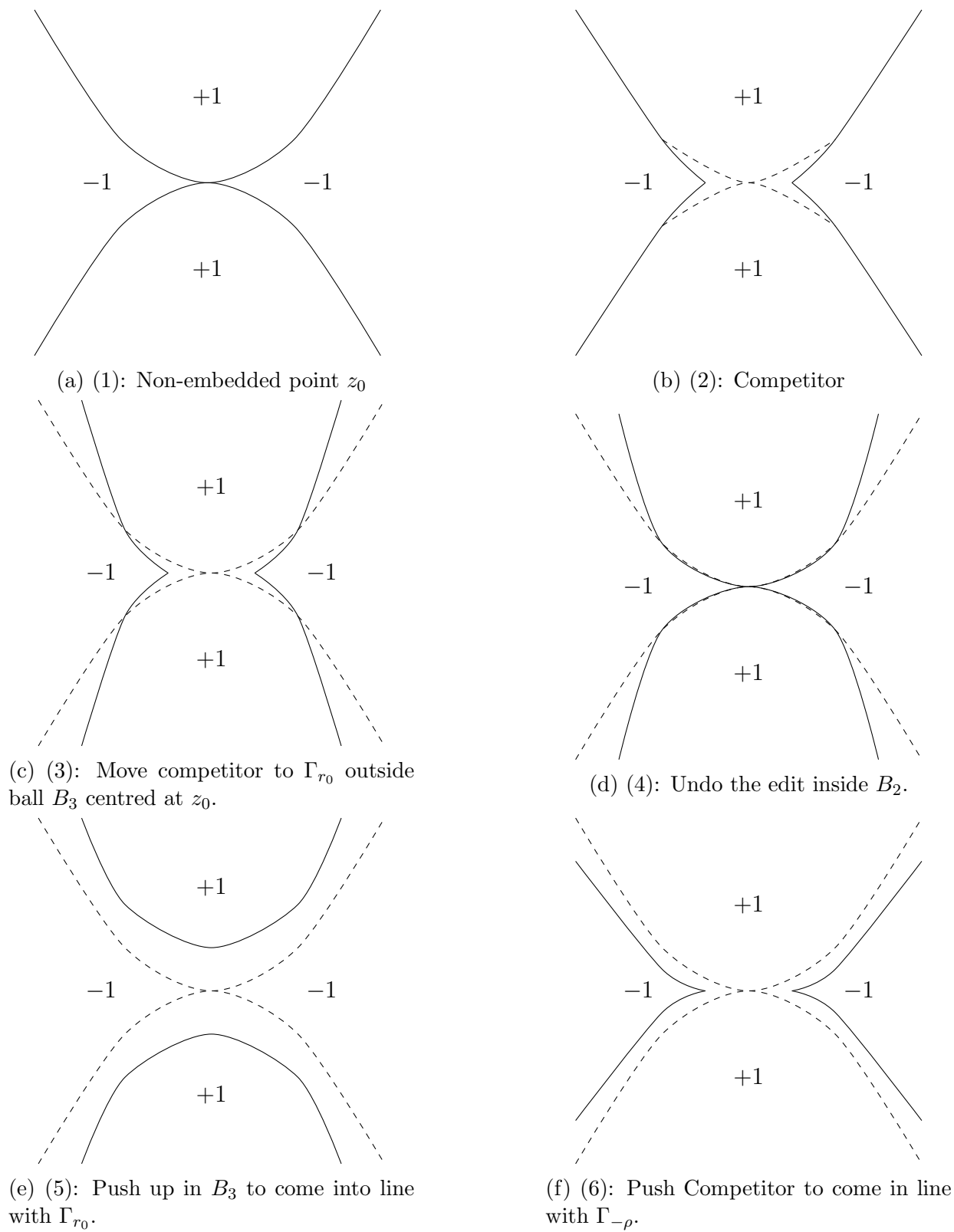

\bigskip 

\textbf{\textit{Path at $\varepsilon$ Level}}

\bigskip 

We carry out this 'pushing', on what we refer to as our abstract cylinder, $\tilde{M} \times \mathbb{R}$.
See Section \ref{subsec: abstract cylinder}.
Here $\tilde{M}$ is an n-dimensional manifold and $\iota \colon \tilde{M} \rightarrow M$ is a smooth immersion.
We define the following map, 
\begin{eqnarray*}
    F \colon \tilde{M} \times \mathbb{R} &\rightarrow& N, \\
    (x,t) &\mapsto& \exp_{\iota(x)} (t \nu(x))),
\end{eqnarray*}
with $\nu$ being a smooth choice of unit normal to immersion, pointing into $E$.
Therefore, we view points $(x,t)$ on our cylinder $\tilde{M} \times \mathbb{R}$ as having base point $\iota(x)$ and moving length $t$ along the geodesic with initial direction $\nu(x)$.
See Figure \ref{fig: Local pictures of M and the corresponding local pictures in abstract cylinder}.

\bigskip 

Recall our function $v_{\varepsilon}^{0} = \overHe_{\varepsilon} \circ \tilde{d}$, then by the Co-Area formula, 
\begin{eqnarray*}
    \mathcal{F}_{\varepsilon, \lambda} (v_{\varepsilon}^{0}) &=& \int_{\mathbb{R}} \left( \frac{\varepsilon}{2} |(\overHe_{\varepsilon})' (t)|^{2} + \frac{W(\overHe_{\varepsilon} (t))}{\varepsilon} - \sigma \lambda \overHe_{\varepsilon} (t) \right) \, \mathcal{H}^{n} (\Gamma_{t}) \, dt, \\
    &=& \int_{\mathbb{R}} \int_{\tilde{M}} \left( \frac{\varepsilon}{2} |(\overHe_{\varepsilon})' (t)|^{2} + \frac{W(\overHe_{\varepsilon} (t))}{\varepsilon} - \sigma \lambda \overHe_{\varepsilon} (t) \right) \, \theta_{t} (x) \, d \mathcal{H}^{n} (x) \, dt,
\end{eqnarray*}
where $\theta_{t} \colon \tilde{M} \rightarrow \mathbb{R}$, is defined by the Area Formula to be such that for a.e $t \in \mathbb{R}$, and any $\mathcal{H}^{n}$-measurable function on $N$, 
\begin{equation*}
    \int_{\Gamma_{t}} g \, d \mathcal{H}^{n} = \int_{\tilde{M}} (g \circ F_{t}) \, \theta_{t} \, d \mathcal{H}^{n},
\end{equation*}
with $F_{t} ( \, \cdot \, ) = F ( \, \cdot \, , t)$.
Then we carry out the relevant 'pushings' by considering a continuous family of functions $\{g_{r}\}_{r \in [0,r']}\subset C(\tilde{M})$,   
\begin{eqnarray*}
    \mathcal{F}_{\varepsilon, \lambda} (v_{\varepsilon}^{r}) &=& \int_{\mathbb{R}} \int_{\tilde{M}} \biggl( \frac{\varepsilon}{2} |(\overHe_{\varepsilon})' (t - g_{r}(x))|^{2} + \frac{W(\overHe_{\varepsilon} (t - g_{r}(x)))}{\varepsilon} \\
    && \hspace{3cm} - \sigma \lambda \overHe_{\varepsilon} (t - g_{r}(x)) \biggr) \, \theta_{t} (x) \, d\mathcal{H}^{n} (x,t) \, dt.
\end{eqnarray*}
See Figure \ref{fig: local pictures showing construction of competitor in abstract cylinder}.

\begin{figure}
    \begin{center}
        \begin{subfigure}{0.7\textwidth}
                \begin{tikzpicture}
                \draw plot [smooth] coordinates {(-2,2) (-1,0.5) (0,0) (1,0.5) (2,2)};
                \node[left] at (-2,2) {$D_{1}$};
                \node[left] at (-2,-2) {$D_{2}$};
                \draw plot [smooth] coordinates {(-2,-2) (-1,-0.5) (0,0) (1,-0.5) (2,-2)};
                \draw[dotted, ultra thick] (-2,0) -- (2,0);
                \draw[->, thick] (0,0) -- (0, 0.5);
                \node[right] at (0,0.5) {$\nu_{1}$};
                \node[left] at (0,-0.5) {$\nu_{2}$};
                \draw[->, thick] (0,0) -- (0, -0.5);
                \node at (0,1.5) {$+1$};
                \node at (0,-1.5) {$+1$};
                \node at (-1.5,0.5) {$-1$}; 
                \node at (-1.5, -0.5) {$-1$};
                \node at (1.5,0.5) {$-1$}; 
                \node at (1.5, -0.5) {$-1$};
                \draw[->] (4,0) -- (3,0);
                \node[above] at (3.5,0) {$F$};
                \draw[dotted, ultra thick] plot [smooth] coordinates {(5, 0.5) (7,1.5) (9, 0.5)};
                \draw (5,1.5) -- (9,1.5); 
                \draw[dotted, ultra thick] plot [smooth] coordinates {(5, -0.5) (7, -1.5) (9, -0.5)};
                \draw (5, -1.5) -- (9, -1.5); 
                \draw[->, thick] (7,1.5) -- (7,2);
                \node[right] at (7,2) {$\partial_{t}$};
                \draw[->, thick] (7,-1.5) -- (7,-2);
                \node[right] at (7,-2) {$\partial_{t}$};
                \draw[->] (9.5, 2) -- (9.5,2.5);
                \node[right] at (9.5,2.5) {$t$};
                \draw[->] (9.5, -2) -- (9.5,-2.5);
                \node[right] at (9.5,-2.5) {$t$};
                \node[right] at (9,1.5) {$t = 0$};
                \node[right] at (9,-1.5) {$t = 0$};
                \node[left] at (5,1.5) {$\tilde{D}_{1}$};
                \node[left] at (5,-1.5) {$\tilde{D}_{2}$};
                \node[below] at (7, 1.5) {$x_{0}^{1}$}; 
                \node[above] at (7,-1.5) {$x_{0}^{2}$};
                \end{tikzpicture}        
                \caption{On the left we have a local picture about a non-emebedded point $z_{0}$ of $M$. 
                On the right the two local pictures about $x_{0}^{1}$ and $x_{0}^{2}$ in $\tilde{M} \times \mathbb{R}$, where $\iota(x_{0}^{1}) = z_{0} = \iota(x_{0}^{2})$. 
                We have, $F(\tilde{D}_{i}) = D_{i}$, and $dF_{x_{0}^{i}} (\partial_{t}) = \nu_{i}$, for $i = 1$ and $2$.
                The dotted line on the left picture represents points in $N$ which are of equal distance to $D_{1}$ and $D_{2}$. 
                The dotted lines on the right-hand picture are the preimages of the dotted line on the left, under the map $F$, and these can be seen as acting as the boundary to the open set $\tilde{T}$ in $\tilde{M} \times \mathbb{R}$.}
                \label{subfig: local picture about non-embedded point}
        \end{subfigure}

        \bigskip 

        \begin{subfigure}{0.7\textwidth} 
            \begin{tikzpicture}
            \draw plot [smooth] coordinates {(-2,2) (-1,0.5) (0,0) (1,0.5) (2,2)}; 
            \node[below left] at (-2,2) {$D$}; 
            \node at (0,1.5) {$+1$}; 
            \node at (0,-0.75) {$-1$}; 
            \draw[->, thick] (0,0) -- (0,0.5); 
            \node[right] at (0,0.5) {$\nu$}; 
            \draw[->] (4,1) -- (3,1);
            \node[above] at (3.5,1) {$F$};
            \draw (5,1) -- (9,1); 
            \node[left] at (5,1) {$\tilde{D}$}; 
            \node[right] at (9,1) {$t = 0$}; 
            \draw[->] (9, 1.5) -- (9,2); 
            \node[right] at (9,2) {$t$}; 
            \draw[->, thick] (7,1) -- (7,1.5); 
            \node[right] at (7,1.5) {$\partial_{t}$};
            \end{tikzpicture}
            \caption{On the left, a local picture about an embedded point of $M$.
            On the right is its preimage in $\tilde{T}$ under the map $F$.}
            \label{subfig: Local picture about embedded point}
        \end{subfigure}
        \caption{Local pictures about points in $M$}
        \label{fig: Local pictures of M and the corresponding local pictures in abstract cylinder}
    \end{center}
\end{figure}

\begin{figure}
    \begin{center}
        \begin{tikzpicture}
            \draw plot [smooth] coordinates {(-2,2) (-1, 0.5) (-0.5, 0)};
            \draw plot [smooth] coordinates {(0.5,0) (1,0.5) (2,2)};
            \draw plot [smooth] coordinates {(-2,-2) (-1,-0.5) (-0.5, 0)}; 
            \draw plot [smooth] coordinates {(0.5,0) (1,-0.5) (2,-2)};
            \draw [dashed] plot [smooth] coordinates {(-1,0.5) (0,0) (1,0.5)};
            \draw [dashed] plot [smooth] coordinates {(-1,-0.5) (0,0) (1,-0.5)};
            \node[left] at (-2,2) {$D_{1}$};
            \node[left] at (-2,-2) {$D_{2}$};
            \draw[dotted, ultra thick] (-2,0) -- (2,0);
            \node at (0,1.5) {$+1$};
            \node at (0,-1.5) {$+1$};
            \node at (-1.5,0.5) {$-1$}; 
            \node at (-1.5, -0.5) {$-1$};
            \node at (1.5,0.5) {$-1$}; 
            \node at (1.5, -0.5) {$-1$};
            \draw[->] (4,0) -- (3,0);
            \node[above] at (3.5,0) {$F$};
            \draw[dotted, ultra thick] plot [smooth] coordinates {(5, 0.5) (7,1.5) (9, 0.5)};
            \draw plot [smooth] coordinates {(5,1.5) (5.75, 1.5) (6.5,1) (7.5,1) (8.25, 1.5) (9,1.5)}; 
            \draw[dashed] (5.75, 1.5) -- (8.25,1.5);
            \draw[dotted, ultra thick] plot [smooth] coordinates {(5, -0.5) (7, -1.5) (9, -0.5)};
            \draw plot [smooth] coordinates {(5,-1.5) (5.75, -1.5) (6.5,-1) (7.5,-1) (8.25, -1.5) (9,-1.5)}; 
            \draw[dashed] (5.75, -1.5) -- (8.25, -1.5);
            \draw[->] (9.5, 2) -- (9.5,2.5);
            \node[right] at (9.5,2.5) {$t$};
            \draw[->] (9.5, -2) -- (9.5,-2.5);
            \node[right] at (9.5,-2.5) {$t$};
            \node[right] at (9,1.5) {$t = 0$};
            \node[right] at (9,-1.5) {$t = 0$};
            \node[left] at (5,1.5) {$\tilde{D}_{1}$};
            \node[left] at (5,-1.5) {$\tilde{D}_{2}$};
            \end{tikzpicture}   
            \caption{How the competitor is constructed as the graph of bump functions about points $x_{0}^{1}$ and $x_{0}^{2}$ over $\tilde{M}$. 
            Whatever is bumped out beyond the dotted line, on the right-hand side, is not considered in $N$. 
            In other words, it is deleted.}
            \label{fig: local pictures showing construction of competitor in abstract cylinder}
    \end{center}
\end{figure}
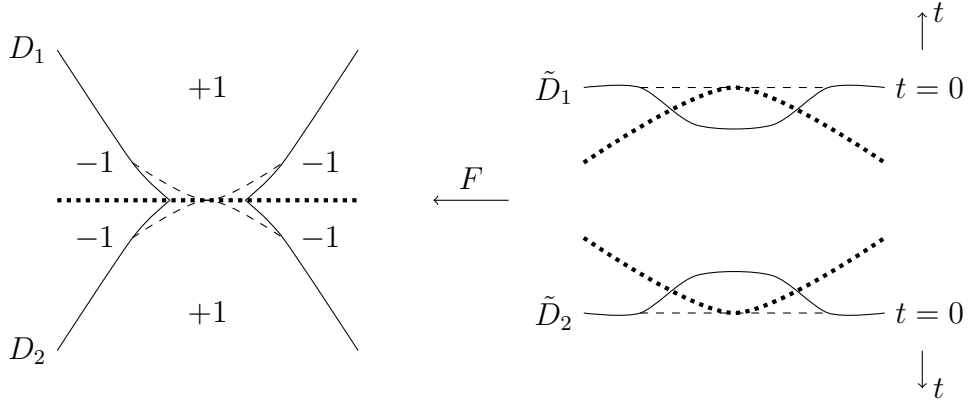

\bigskip 

\subsection{Structure of the Paper}

The paper is organised as follows.
We start with setup:
\begin{itemize}
    \item Section \ref{sec: Construction of Objects} is devoted to set up of objects used in the main computation. 
    \item In Section \ref{sec: Baseline Computation} we carry out the main computation.
    The constructions that follow are carried out by plugging explicitly defined functions into this computation.
\end{itemize}
To prove Theorem \ref{thm: Allen-Cahn minmax limit}:
\begin{itemize}
    \item In Section \ref{subsec: Recovery Path} we build the dotted path (5) $\rightarrow$ (1) $\rightarrow$ (6) in Figure \ref{fig: The Path}.
    Theorem \ref{thm: Allen-Cahn minmax limit} then follows upon combining this with computations in Sections \ref{sec: Completing Path to -1} and \ref{sec: Completing Path to +1}; in these sections we build the paths (5) $\rightarrow$ $-1$, and (6) $\rightarrow$ $+1$, in Figure \ref{fig: The Path}.
\end{itemize}
To prove Theorem \ref{thm: Main Theorem} we argue by contradiction, assuming that $M$ has a non-embedded point $z_{0}$:
\begin{itemize}
    \item In Section \ref{sec: Path From Disks to Crab} we construct our competitor about $z_{0}$. 
    This is the dashed path (1) $\rightarrow$ (2) in Figure \ref{fig: The Path}.
    \item In Section \ref{sec: path to minus 1} we construct a path from the competitor to the stable constant $a_{\varepsilon}$. 
    This is the solid path (2) $\rightarrow$ (6) $\rightarrow$ $+1$, in Figure \ref{fig: The Path}.
    \item In Section \ref{sec: path to plus 1} we construct a path from the competitor to the stable constant $b_{\varepsilon}$.
    This is the solid path (2) $\rightarrow$ (3) $\rightarrow$ (4) $\rightarrow$ (5) $\rightarrow$ $-1$ in Figure \ref{fig: The Path}.
    \item In Section \ref{sec: Pieceing Path Together} we piece together this continuous path from $a_{\varepsilon}$ to $b_{\varepsilon}$, in $W^{1,2}(N)$.
    The energy $\mathcal{F}_{\varepsilon, \lambda}$, is less than $A_{2} - \varsigma$ for every point along this path, Figure \ref{fig: The Path}.
    This contradicts the min-max construction, proving that $M$ is embedded.
\end{itemize}
Finally, in Section \ref{sec: index} we prove Corollary \ref{cor:index} (the Morse index of $M$ is equal to $1$, which also implies that $M$ must be connected).

\subsection{A Note on Choice of Constants}

The biggest subtlety in the Construction of the path in Sections \ref{sec: Path From Disks to Crab}, \ref{sec: path to minus 1} and \ref{sec: path to plus 1} is the choice of constants, and the order that we choose them in. 
We explicitly list the order of choices here, and reference where they have been chosen. 
\begin{enumerate}
    \item We first choose a non-embedded point $z_{0}$
    \item We choose $\delta = \delta (z_{0}, N, M, g, \lambda, W) > 0$, in Remarks \ref{rem: defining tilde d 1 and tilde d 2}, \ref{rem: diffeos F i}, \ref{rem: choice of delta for set d 1 = d 2}, \ref{rem: choice of delta so that sigma minus has lower quadratic bound}, \ref{rem: choice of delta for upper and lower mean curvature bounds in ball about z 0}. 
    \item We choose $L = L (z_{0}, N, M, g, \delta, \lambda, W) > 0$, in Remarks \ref{rem: Choice of L and r 0 to fit in ball B delta}, \ref{rem: second choice of L}.
    \item We choose $k = k (z_{0}, N, M, g, \delta, L, \lambda, W)$, in Remark \ref{rem: Choice of gamma and k}.
    \item We choose $r_{0} = r_{0} (z_{0}, N, M, g, \delta, L, k, \lambda, W) > 0$, in Remarks \ref{rem: Choice of L and r 0 to fit in ball B delta}, \ref{rem: second choice of L}, \ref{rem: Choice of r 0 from disks push up claim}.
    \item We choose $\rho = \rho (z_{0}, N, M, g, \delta, L, k, r_{0}, \lambda, W) > 0$, in Remarks \ref{rem: choice of rho so that B l fits into D 1}, \ref{rem: choice of rho so that B 2 l x 0 1 lies in W 1}, \ref{rem: choice of rho so that B l times -rho rho fits into B delta}, \ref{rem: choice of rho based on r 0 and L}, \ref{rem: choice of rho for path from disks to crab}, \ref{rem: choice of rho based on r 0}. 
    \item We define $l = l (\rho)$ in (\ref{eqn: def of l}).
    \item We choose $\tau > 0$. 
    \item We finally choose $\varepsilon_{\tau} = \varepsilon_{\tau} (z_{0}, N, M, g, \delta, L, k, r_{0}, \rho, \tau, \lambda, W) > 0$, in Remarks \ref{rem: choice in epsilon 1}, \ref{rem: Choice of epsilon 2 for function P epsilon}, \ref{rem: Choice of epsilon 3} and Sections \ref{subsec: Recovery Path} and \ref{sec: Pieceing Path Together}.
\end{enumerate}

\section{Construction of Objects}\label{sec: Construction of Objects}

\subsection{Signed Distance Function}

Let $d_{\overline{M}} \colon N \rightarrow \mathbb{R}$ be the distance function to the closed set $\overline{M} \subset N$.
As $\overline{M}$ is closed, and $N$ is complete, Hopf--Rinow tells us that, for each $z$ in $N$, the value, $d_{\overline{M}}(z)$, is obtained by a geodesic from $z$ to a point on $\overline{M}$.
Furthermore, $d_{\overline{M}}$ is Lipschitz, with Lipschitz constant 1.

\bigskip

The set $E = \{u_{0} = 1\}$ is an open in $N$, and $\overline{M} = \partial E$. 
This allows us to define the signed distance function, $\tilde{d} \colon N \rightarrow \mathbb{R}$, to $\overline{M}$, which takes positive values in $E$, and negative values in $N \setminus E$,
\begin{equation*}
    \tilde{d}(y) = \begin{cases}
        d_{\overline{M}} (z), & x \in E, \\
        - d_{\overline{M}} (z), & x \not\in E.
    \end{cases}
\end{equation*}
This is a $1$-Lipschitz function on $N$.

\subsection{Abstract Surface} 

$M$ is a quasi-embedded $\lambda$-CMC hypersurface, \cite[Definition 8]{BW-inhomogeneous-allen-cahn}.

\begin{rem}\label{rem: graphical characterisation of points on M}
    For a point $z \in M$, there exists an $n$-dimensional linear subspace $T = T_{z} \subset T_{z} N$, and a unit vector $\nu_{z} \in T^{\perp}$, along with $r = r(z) >0$, $s = s(z) > 0$, and $S = S (z) >0$, such that $S < \text{inj} \, (N)$.
    We define the cylinder
    \begin{equation*}
        C_{z, T, r, s} \coloneqq \exp_{z} \left( \{ X + t \nu_{z} \colon X \in B_{r}^{T_{z} N} (0) \cap T, \, t \in (-s,s) \} \right) \subset B_{S}^{N} (z),
    \end{equation*}
    and, one of the following holds:
    \begin{enumerate}
        \item \label{rem point: Graphical rep of embedded point} 
        (See Figure \ref{subfig: Local picture about embedded point})
        There exists a smooth function, 
        \begin{equation*}
            f \colon B_{z, T, r} \coloneqq B_{r}^{T_{z} N} (0) \cap T \rightarrow (-s,s), 
        \end{equation*}
        which satisfies, 
        \begin{equation*}
            \begin{cases}
                f(0) = 0, \\
                \nabla^{T} f (0) = 0, \\
                \Delta_{T} f (0) = \lambda,
            \end{cases}
        \end{equation*}
        and, 
        \begin{equation*}
            \overline{M} \cap C_{z, T, r, s} = \exp_{z} ( \text{Graph} \, (f) ) = \exp_{z} ( \{ X + f(X) \nu_{z} \colon X \in B_{z,T,r} \} )
        \end{equation*}
        Furthermore, we have that, 
        \begin{equation*}
            E \cap C_{z, T, r, s} = \exp_{z} ( \{ X + t \nu_{z} \colon X \in B_{z, T, r}, \, f(X) < t < s \} ),
        \end{equation*}
        and we can define a smooth choice of unit normal to $\exp_{z} (\text{Graph} \, (f))$, 
        \begin{equation*}
            \nu \colon \exp_{z} (\text{Graph} \, (f)) \rightarrow T (\exp_{z} (\text{Graph} \, (f)))^{\perp},
        \end{equation*}
        such that $\nu (z) = \nu_{z}$.
        \item \label{rem point: graphical rep of non-embedded point} 
        (See Figure \ref{subfig: local picture about non-embedded point})
        There exists two smooth functions, 
        \begin{equation*}
            f_{1}, \, f_{2} \colon B_{z, T, r} \rightarrow (-s,s),
        \end{equation*}
        which satisfy, 
        \begin{equation*}
            \begin{cases}
                f_{1} (0) = 0 = f_{2} (0), \\
                f_{1} \geq f_{2}, \\
                \nabla^{T} f_{1} (0) = 0 = \nabla^{T} f_{2} (0), \\
                \Delta_{T} f_{1} (0) = \lambda = - \Delta_{T} f_{2} (0),
            \end{cases}
        \end{equation*}
        and, 
        \begin{equation*}
            \overline{M} \cap C_{z,T, r, s} = \bigcup_{i= 1,2} \exp_{z} (\text{Graph} \, (f_{i})) = \bigcup_{i = 1, 2} \exp_{z} (\{ X + f_{i} (X) \nu_{z} \colon X \in B_{z, T, r} \}).
        \end{equation*}
        Furthermore, we have that, 
        \begin{eqnarray*}
            E \cap C_{z, T, r, s} &=& \exp_{z} (\{ X + t \nu_{z} \colon X \in B_{z, T, r}, \, f_{1} (X) < t < s \}) \\
            && \hspace{1cm} \cup \, \exp_{z} (\{ X + t \nu_{z} \colon X \in B_{z, T, r}, \, - s < t < f_{2} (X) \}),
        \end{eqnarray*}
        and we can define smooth choices of unit normals, 
        \begin{equation*}
            \nu_{i} \colon \exp_{z} (\text{Graph} \, (f_{i})) \rightarrow T (\exp_{z} (\text{Graph} \, (f)))^{\perp},
        \end{equation*}
        such that $\nu_{1} (z) = \nu_{z}$, and $\nu_{2} (z) = - \nu_{z}$.
    \end{enumerate}
    If Case \ref{rem point: Graphical rep of embedded point} holds, then we say that $z$ is an embedded point of $M$. 
    Alternatively, if Case \ref{rem point: graphical rep of non-embedded point} holds, we say that $z$ is a non-embedded point of $M$.
    In either case, the tangent space of $M$ at $z$ is given by, $T_{z} M \coloneqq T_{z}$.
\end{rem}

\begin{claim}\label{claim: CMC disks intersect on a set of measure 0 M, and both unit normals point into E}
    (Remark 2.6 of \cite{BW-Stable-CMC})
    The set of non-embedded points of $M$ has $\mathcal{H}^{n}$-measure $0$.
\end{claim}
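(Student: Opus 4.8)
The plan is to show that the set of non-embedded points of $M$ is not just small but in fact \emph{relatively closed} in $M$ and has empty interior in $M$, and then upgrade this to the stated $\mathcal{H}^n$-null conclusion via a covering argument; however, the cleanest route is to argue directly from the local structure in Remark \ref{rem: graphical characterisation of points on M} together with the fact that $M$ is $C^2$ and quasi-embedded. First I would recall that by Remark \ref{rem: graphical characterisation of points on M}, around a non-embedded point $z$ the set $\overline{M} \cap C_{z,T,r,s}$ is the union of two graphs $\mathrm{Graph}(f_1)$, $\mathrm{Graph}(f_2)$ with $f_1 \geq f_2$, $f_1(0) = f_2(0) = 0$, equal gradients at $0$, but \emph{opposite} Laplacians: $\Delta_T f_1(0) = \lambda = -\Delta_T f_2(0)$. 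Set $w \coloneqq f_1 - f_2 \geq 0$; then $w(0) = 0$ and $w$ attains an interior minimum at $0$, while $\Delta_T w(0) = 2\lambda > 0$. Hence near $0$ the function $w$ is strictly positive away from its (small) zero set, and the key point is that the coincidence set $\{w = 0\}$ — which, within the cylinder, is precisely (the preimage under $\exp_z$ of) the set of non-embedded points — is forced to be $\mathcal{H}^n$-negligible.

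The main step is therefore to control $\{w = 0\}$. Since each $f_i$ solves a uniformly elliptic quasilinear PDE (the $\lambda$-CMC graph equation), $w$ satisfies a linear uniformly elliptic equation of the form $L w = 2\lambda + (\text{lower-order terms vanishing with }\nabla w)$, so in a possibly smaller ball $\mathcal{L} w \geq \lambda > 0$ in the strong (nondivergence) sense for a uniformly elliptic operator $\mathcal{L}$ with bounded measurable coefficients. A nonnegative function satisfying $\mathcal{L}w \geq c > 0$ cannot vanish on a set of positive measure: indeed the strong maximum principle (or the Alexandrov–Bakelman–Pucci estimate applied on small balls) forces $w > 0$ on a dense open set, and more quantitatively, at any point of the zero set one gets a contradiction with $w \geq 0$ attaining a minimum there unless that point is isolated-in-measure. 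The quickest rigorous version: $\{w = 0\} \subseteq \{\nabla w = 0\}$ (since $w \geq 0$ attains its min on the zero set), but on $\{w = \nabla w = 0\}$ the equation reads $\Delta_T w = \lambda + 0 \ne 0$, which is incompatible with $w$ having a second-order vanishing there; one concludes $\mathcal{H}^n(\{w=0\}) = 0$ by the standard fact that the zero set of a $C^2$ function whose Hessian is nondegenerate wherever the function and its gradient vanish has measure zero (in fact it is contained in a countable union of submanifolds of dimension $\leq n-1$).

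Having shown the non-embedded points contribute nothing locally, I would finish with a covering argument: by Remark \ref{rem: graphical characterisation of points on M} every point of $M$ (embedded or not) has a cylinder neighborhood $C_{z,T,r,s}$; the non-embedded points of $M$ lying in such a cylinder are contained in $\exp_z(\mathrm{Graph}(f_1)) \cap \exp_z(\mathrm{Graph}(f_2))$, which projects diffeomorphically onto $\{w = 0\} \subset B_{z,T,r}$, an $\mathcal{H}^n$-null set by the previous paragraph. Since $M$ is a separable metric space it is covered by countably many such cylinders, and a countable union of $\mathcal{H}^n$-null sets is $\mathcal{H}^n$-null; the diffeomorphism $\exp_z$ (restricted to a compact piece) is bi-Lipschitz, so it preserves $\mathcal{H}^n$-nullity. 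This yields $\mathcal{H}^n(\{\text{non-embedded points of }M\}) = 0$.

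The hard part is the local claim that $\{w = 0\}$ is $\mathcal{H}^n$-null: one must be careful that $w$ is only $C^{2}$ (or $C^{1,1}$ from the CMC regularity theory, depending on how much regularity one invokes) rather than analytic, so the zero set argument must genuinely use the sign of $\Delta_T w$ at points where $\nabla w = 0$ rather than any unique continuation. If one wanted to be fully self-contained one could instead invoke that $\partial^*E$ has a well-defined measure-theoretic unit normal $\mathcal{H}^n$-a.e.\ and that at a non-embedded point the two sheets induce opposite normals pointing into $E$ — contradicting a.e.\ uniqueness of the reduced-boundary normal except on a null set — which is in fact the route indicated by the cited Remark 2.6 of \cite{BW-Stable-CMC}; I would present the elliptic argument above as the primary proof and remark that this measure-theoretic argument gives an alternative.
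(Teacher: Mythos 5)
The paper itself gives no proof of this claim; it is quoted as Remark 2.6 of \cite{BW-Stable-CMC}, a reference I cannot compare against directly, so I will assess your argument on its own merits.

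Your primary argument is correct and complete. Writing $w=f_1-f_2\geq 0$, you observe that both $f_1$ and $f_2$ solve the (quasilinear, uniformly elliptic) CMC graph equation with prescribed mean curvature $\lambda$ but with respect to \emph{opposite} normals (since the unit normal must point into $E$, and $E$ lies above $\mathrm{Graph}(f_1)$ and below $\mathrm{Graph}(f_2)$), so subtracting gives a linear uniformly elliptic equation $a^{ij}\partial_{ij}w + b^i\partial_i w = 2\lambda$ with a strictly positive right-hand side. Since $w\geq 0$, every zero is an interior minimum, hence $\{w=0\}\subseteq\{\nabla w=0\}$; there the lower-order terms drop out and the PDE forces $a^{ij}\partial_{ij}w = 2\lambda>0$, so $\mathrm{Hess}\,w\neq 0$ everywhere on $\{w=0\}$. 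On the other hand, for a $W^{2,1}$ (in particular $C^2$) function one has $\mathrm{Hess}\,w=0$ $\mathcal{L}^n$-a.e.\ on $\{\nabla w = 0\}$, which gives $\mathcal{H}^n(\{w=0\})=0$. The covering argument that finishes the proof is standard and fine. One minor slip: you write ``$\Delta_T w = \lambda + 0 \neq 0$''; the forcing term is $2\lambda$ (or, more precisely, $a^{ij}\partial_{ij}w=2\lambda$), though this changes nothing.

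The alternative argument you mention at the end is, however, not correct as stated. You propose to derive a contradiction from the a.e.-uniqueness of the measure-theoretic unit normal on $\partial^*E$, on the grounds that a non-embedded point has two sheets with opposite normals. But a non-embedded point $z$ is a point where the two tangent disks bound $E$ from opposite sides, so that $E$ fills the complement of a thin lens near $z$; a direct computation shows the density of $E$ at $z$ is $1$, i.e.\ $z\in E^{(1)}$. Hence non-embedded points are not in $\partial^*E$ (nor even in the essential boundary), and there is no conflict with uniqueness of the reduced-boundary normal to exploit. What this alternative route would actually need is the statement that $\mathcal{H}^n(\partial E\setminus\partial^*E)=0$ in this setting — which is not a general measure-theoretic fact and is, in effect, the regularity assertion one is trying to prove. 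So keep the PDE argument as the proof; the density observation can be retained as a complementary remark (it shows that non-embedded points lie in $E^{(1)}\cap\partial E$), but it does not by itself yield the $\mathcal{H}^n$-null conclusion.
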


We define our abstract surface $\tilde{M}$ by
\begin{equation*}
    \tilde{M} = \{ (z, \nu) \colon z \in M, \, \nu \in T_{z} M^{\perp}, \, \text{with} \, |\nu| = 1, \, \text{and points into} \, E \}.
\end{equation*}
Locally $\tilde{M}$ is a smooth, embedded CMC disk in $N$, therefore, $\tilde{M}$ is a smooth $n$-dimensional manifold.

\subsection{Abstract Cylinder}\label{subsec: abstract cylinder}

Consider $x$ in $\tilde{M}$, then $x = (z, X)$, for some $z$ in $M$ and $X$ in $T_{z}M^{\perp}$. 
We define two, smooth projections, first from $\tilde{M}$ to $TM^{\perp}$, 
\begin{equation*}
    \nu \colon (z,X) \mapsto X,
\end{equation*}
and secondly, from $\tilde{M}$ to $M$, 
\begin{equation*}
    \iota \colon (z,X) \mapsto z.
\end{equation*}
From these we define the following map,
\begin{eqnarray*}
    F \colon \tilde{M} \times \mathbb{R} &\rightarrow& N, \\ 
    (x,t) &\mapsto& \exp_{\iota(x)} (t \nu(x)),
\end{eqnarray*}
which, as $N$ is complete, is well-defined.
For a fixed $x$ in $\tilde{M}$, $F$ is a unit parametrisation of a geodesic which, at time $0$, passes through $\iota(x)$, with velocity $\nu (x)$. 
The set $\{ t \colon d_{\overline{M}}(F(x,t)) = |t| \}$, is the set of times $t$, at which this geodesic achieves the shortest distance from $F(x,t)$ to $\overline{M}$.
Consider the subset $\{t \colon \tilde{d} (F(x,t)) = t\} \subset \{ t \colon d_{\overline{M}}(F(x,t)) = |t| \}$, and its endpoints,
\begin{eqnarray*}
    \sigma^{+}(x) &=& \sup \{ t \colon \tilde{d}(F(x,t)) = t \} \geq 0, \\ 
    \sigma^{-}(x) &=& \inf \{ t \colon \tilde{d}(F(x,t)) = t \} \leq 0.
\end{eqnarray*}

These are uniformly bounded functions on $\tilde{M}$, and in fact as the next claim shows, $\{ t \colon \tilde{d}(F(x,t)) = t \}$ is a closed and connected interval on $\mathbb{R}$.

\begin{claim}\label{claim: interval from sigma plus to sigma minus contains all t which equality holds for}
    We have that 
    \begin{equation*}
        [\sigma^{-}(x), \sigma^{+}(x)] = \{ t : \tilde{d}(F(x,t)) = t\}.
    \end{equation*}
\end{claim}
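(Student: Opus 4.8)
The plan is to show the two containments separately, the non-trivial one being $\{t : \tilde{d}(F(x,t)) = t\} \subseteq [\sigma^{-}(x), \sigma^{+}(x)]$, which amounts to proving that the set on the left is an interval (it is clearly bounded, and contains $0$ since $\tilde d(F(x,0)) = \tilde d(\iota(x)) = 0$ as $\iota(x) \in M = \partial^* E \subseteq \overline M$; the endpoints $\sigma^{\pm}(x)$ lie in the set once we know it is closed). So the heart of the matter is: if $\tilde d(F(x,t_1)) = t_1$ and $\tilde d(F(x,t_2)) = t_2$ with $t_1 < t_2$, then $\tilde d(F(x,t)) = t$ for every $t \in [t_1, t_2]$.

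First I would record the elementary ``one-sided'' monotonicity fact: since $F(x,\cdot)$ is a unit-speed geodesic, for any $a, b$ we have $|\tilde d(F(x,a)) - \tilde d(F(x,b))| \le d_{\overline M}$-Lipschitz-estimate $\le |a - b|$, because $\tilde d$ is $1$-Lipschitz on $N$ and $t \mapsto F(x,t)$ is $1$-Lipschitz into $N$. Consequently $t \mapsto \tilde d(F(x,t)) - t$ is non-increasing and $t \mapsto \tilde d(F(x,t)) + t$ is non-decreasing. Now suppose $\tilde d(F(x,t_1)) = t_1$ and $\tilde d(F(x,t_2)) = t_2$ with $t_1 < t_2$. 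Fix $t \in (t_1, t_2)$. From $\tilde d(F(x,t)) - t \le \tilde d(F(x,t_1)) - t_1 = 0$ we get $\tilde d(F(x,t)) \le t$, and from $\tilde d(F(x,t)) + t \ge \tilde d(F(x,t_2)) + t_2$... wait, that direction gives $\tilde d(F(x,t)) + t \le \tilde d(F(x,t_2)) + t_2$, i.e. $\tilde d(F(x,t)) \le t_2 - t + t_2$; the useful inequality is the other one: $\tilde d(F(x,t)) + t \ge \tilde d(F(x,t_1)) + t_1 = 2t_1$? That is too weak. Instead use non-increasingness of $\tilde d(F(x,\cdot)) - \cdot$ backwards from $t_2$: $\tilde d(F(x,t)) - t \ge \tilde d(F(x,t_2)) - t_2 = 0$, so $\tilde d(F(x,t)) \ge t$. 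Combining, $\tilde d(F(x,t)) = t$. This shows the set is an interval.

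Next I would check closedness: $t \mapsto \tilde d(F(x,t))$ is continuous (composition of the continuous $\tilde d$ and the continuous $F(x,\cdot)$), so $\{t : \tilde d(F(x,t)) = t\}$ is closed as the preimage of $\{0\}$ under the continuous map $t \mapsto \tilde d(F(x,t)) - t$; being also bounded, it is a compact interval, hence exactly $[\sigma^{-}(x), \sigma^{+}(x)]$ with $\sigma^{\pm}(x)$ attained. The reverse containment $[\sigma^{-}(x), \sigma^{+}(x)] \subseteq \{t : \tilde d(F(x,t)) = t\}$ is then immediate, since the endpoints belong to the (closed) set and we have just shown the set is an interval between them. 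I would also spell out once and for all why $\sigma^+(x) \ge 0 \ge \sigma^-(x)$: $t = 0$ lies in the set, as noted. Then the displayed equality follows.

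The only genuine obstacle is the sign bookkeeping in the monotonicity argument: one must correctly identify that $1$-Lipschitzness of $\tilde d$ along the unit-speed geodesic forces $\tilde d(F(x,t)) - t$ to be non-increasing and $\tilde d(F(x,t)) + t$ non-decreasing, and then apply the right one of these at the right endpoint (from below at $t_1$ to get $\tilde d(F(x,t)) \le t$, and from above at $t_2$ via non-increasingness of $\tilde d(F(x,\cdot)) - \mathrm{id}$ to get $\tilde d(F(x,t)) \ge t$). Everything else is a routine continuity/compactness argument; no curvature hypothesis or regularity of $M$ is needed here, only that $\tilde d$ is $1$-Lipschitz and $F(x,\cdot)$ is a unit-speed geodesic.
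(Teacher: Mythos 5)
Your proof is correct and rests on the same key fact the paper uses, namely that $f(t) := \tilde d(F(x,t))$ is $1$-Lipschitz with $f(0)=0$; you package it as ``$f - \mathrm{id}$ is non-increasing, so its zero set is a closed bounded interval,'' while the paper pivots at $t=0$ and shows separately that $f(t_0) < t_0$ for $t_0 \geq 0$ propagates forward (and the mirror statement for $t_0 \leq 0$), then invokes closedness at $\sigma^{\pm}(x)$. The two are the same Lipschitz estimate read off in slightly different ways; yours is a touch more symmetric and avoids the case split, though you have the roles of the two containments reversed in your opening paragraph (the inclusion $\{t: f(t)=t\} \subseteq [\sigma^-(x),\sigma^+(x)]$ is immediate from the definitions of $\sigma^{\pm}$ as $\inf$ and $\sup$; the substantive one is the reverse), a harmless slip since what you actually prove is that the set is a closed interval, which settles both.
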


\begin{proof}
    Consider the geodesic, $\gamma : t \mapsto F(x,t)$, and define the following function, 
    \begin{eqnarray*}
        f : t \mapsto \tilde{d}(F(x,t)).
    \end{eqnarray*}
    This is a $1$-Lipschitz function with $f(0) = 0$. 
    Indeed, 
    \begin{equation*}
        |f(t_{1}) - f(t_{2})| \leq |d(F(x,t_{1}), F(x,t_{2}))| \leq \text{Length} (\gamma_{|[t_{1}, t_{2}]}) = |t_{1} - t_{2}|. 
    \end{equation*}

    For $t_{0} \geq 0$, such that $f(t_{0}) \not= t_{0}$, we must have $f(t_{0}) < t_{0}$.
    By Lipschitz constant $1$, for any $t > t_{0}$,
    \begin{eqnarray*}
        f(t) &=& f(t) - f(t_{0}) + f(t_{0}), \\
        &\leq& t- t_{0} + f(t_{0}), \\
        &<&  t.
    \end{eqnarray*}
    Similarly, if we have $t_{0} \leq 0$, such that $f(t_{0}) \not= t_{0}$, then $f(t) \not= t$, for all $t < t_{0}$.

    \bigskip 

    By continuity, we have that $\tilde{d}(F(x, \sigma^{+} (x))) = \sigma^{+}(x)$, and therefore by above, for all $t \in [0, \sigma^{+}(x)]$, we must have that $\tilde{d}(F(x,t)) = t$.
    By definition of $\sigma^{+}(x)$, for all $t > \sigma^{+}(x)$, $\tilde{d}(F(x,t)) < t$. 
    Therefore, 
    \begin{equation*}
        [0, \sigma^{+}(x)] = \{t \geq 0 \colon \tilde{d}(F(x,t)) = t \}.
    \end{equation*}
    Similarly, $[\sigma^{-}(x), 0] = \{ t \leq 0 \colon \tilde{d}(F(x,t)) = t \}.$
\end{proof}

\bigskip 

We define the abstract cylinder, 
\begin{equation*}
    \tilde{T} = \{(x,t) \colon x \in \tilde{M}, \, t \in (\sigma^{-}(x), \sigma^{+}(x))\} \subset \tilde{M} \times \mathbb{R}.
\end{equation*}
Defining the projection map from $\tilde{M} \times \mathbb{R}$ onto $\mathbb{R}$, $p \colon (x,t) \mapsto t$, then on $\tilde{T}$ we have that $\tilde{d} \circ F = p$.

\bigskip 

We wish to work on $\tilde{T}$ instead of $N$.
The following Lemma is crucial in that respect.

\begin{lem}\label{lem: end points of length minimising geodesics to M are smooth points}
    (Geodesic Touching Lemma)
    For all $y$ in $N \setminus \overline{M}$, there exists a geodesic from $y$ to $\overline{M}$ that achieves the length of $d_{\overline{M}}(y)$. 
    The end point of this geodesic on $\overline{M}$ must in fact be a quasi-embedded point of $M$, and the geodesic will hit $M$ orthogonally.
\end{lem}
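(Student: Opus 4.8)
The statement has two parts: first, existence of a length-minimising geodesic from $y$ to $\overline M$, which is immediate from Hopf--Rinow since $N$ is complete and $\overline M$ is closed (this was already observed in the Signed Distance Function subsection); and second, the substantive claim that the foot point $z$ of \emph{any} such minimising geodesic is a quasi-embedded (i.e.\ smooth, $C^2$) point of $M$ — equivalently, $z \notin \overline M \setminus M$ — and that the geodesic meets $M$ orthogonally. The orthogonality is standard first-variation: if the geodesic $\gamma$ from $y$ realises $d_{\overline M}(y) = \mathrm{dist}(y,\overline M)$ and its foot point $z$ lay on a $C^2$ piece of $M$ with a nonzero tangential component of $\gamma'(z)$, one could decrease the distance by moving the foot point along $M$, contradicting minimality; so the real content is that $z$ is a regular point.

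The plan is to argue by contradiction. Suppose $z \in \overline M \setminus M$ lies in the singular set. Recall from the recollection of \cite{BW-inhomogeneous-allen-cahn} that the limit varifold $V = V_\lambda + V_0$ has a $(\lambda,0)$-CMC structure away from a closed set $\mathcal S$ of Hausdorff dimension at most $n-7$; the genuinely singular points are exactly those of $\mathcal S$. The key geometric input is a \textbf{maximum-principle / tangent-cone obstruction}: at a point $z$ realising the distance $d_{\overline M}(y) = s > 0$, the geodesic ball (or the level set $\Gamma_s = \{\tilde d = -s\}$, equivalently the sphere of radius $s$ around $y$) is a smooth hypersurface through $z$ that lies locally on one side of $\overline M$ and touches it at $z$. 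One then blows up at $z$: the tangent cone $C$ of $V$ (or of $\partial^* E$) at $z$ is a stationary/CMC cone (mean curvature scales away, so the blow-up limit is a stationary integral cone, since $V$ has bounded generalised mean curvature), and it must lie weakly on one side of the tangent plane (a half-space boundary) coming from the smooth touching sphere. By the strong maximum principle for stationary varifolds (Ilmanen / Solomon–White, or Wickramasekera's sheeting theorem applied to the one-sided situation), a stationary cone lying on one side of a hyperplane and touching it must itself be that hyperplane with some integer multiplicity; combined with the regularity theory already invoked in \cite{BW-inhomogeneous-allen-cahn} (Allard-type, or the sheeting theorem near multiplicity-one), this forces $z$ to be a regular point of $V$ — contradicting $z \in \mathcal S$. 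Hence the foot point lies in $\mathrm{gen\text{-}reg}\,V$, i.e.\ it is a quasi-embedded point of $M$.

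Concretely the steps I would carry out: (1) invoke Hopf--Rinow for existence of the minimiser $\gamma$ and set $s = d_{\overline M}(y)>0$, $z = \gamma(s) \in \overline M$; (2) record that the metric sphere $\partial B_s(y)$ is a smooth embedded hypersurface near $z$ (since $s < \mathrm{inj}(N)$ can be arranged locally, or by taking $y$ to be the first focal-type point — more simply, use that $\tilde d$ is differentiable along the interior of the minimising geodesic and its $s$-level set is smooth near $z$) and that $\overline M$ lies in the closed region $\{d_{\overline M}(y) \le s\}$, touching $\partial B_s(y)$ at $z$; (3) blow up $V$ at $z$, using the bounded generalised mean curvature to get a stationary integral tangent cone $C$ supported in a half-space with boundary hyperplane $P = T_z \partial B_s(y)$; (4) apply the strong maximum principle for stationary varifolds to conclude $\mathrm{spt}\,C \subset P$, then use the regularity results cited in the excerpt (the $(\lambda,0)$-CMC structure theorem from \cite{BW-inhomogeneous-allen-cahn}, resp.\ \cite[Theorem 9.1]{BW-Stable-PMC}) to upgrade this to: $z$ is a $\mathrm{gen\text{-}reg}$ point, so $z \in M$ and $M$ is a $C^2$ quasi-embedded hypersurface near $z$; (5) finally run the first-variation-of-length argument at the now-smooth foot point to get orthogonality (and, using quasi-embeddedness, note that even if $z$ is a tangential self-touching point the geodesic still hits orthogonally, since both sheets share the tangent plane).

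\textbf{Main obstacle.} The delicate point is step (3)--(4): justifying that the blow-up of the min-max interface at a putative singular point is a \emph{stationary} cone trapped on one side of a hyperplane, and that the one-sided strong maximum principle applies in this varifold (possibly higher-multiplicity, possibly with the $V_0$ minimal component present) setting. One must be careful that the touching hypersurface is genuinely smooth and genuinely one-sided near $z$ — this uses that $z$ realises a \emph{global} distance, so no other part of $\overline M$ can cross $\partial B_s(y)$ — and that the maximum principle is being applied in a form valid for stationary integral varifolds without assuming embeddedness a priori (e.g.\ the version of Ilmanen/Solomon--White, or via Wickramasekera's regularity theory, both already in the ambit of \cite{BW-inhomogeneous-allen-cahn}). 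Once that is in place, the rest is bookkeeping with the regularity dictionary already established in the cited works.
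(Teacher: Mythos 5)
Your proposal is a correct reconstruction of the argument the paper relies on: the paper's proof is a one-line reference to \cite[Lemma 3.1]{bellettini2020multiplicity1} (with Wickramasekera's Sheeting Theorem replaced by the CMC version from \cite{BW-Stable-CMC}), and that cited lemma uses exactly the existence-via-Hopf--Rinow, one-sided touching sphere, tangent-cone/maximum-principle, and sheeting-theorem regularity chain you outline. The one technical point you gesture at but should pin down is the smoothness of the comparison sphere at the foot point $z$: replace the centre $y$ by $\gamma(\epsilon)$ for small $\epsilon>0$ so that $z$ is not a cut point of the perturbed centre and $\partial B_{s-\epsilon}(\gamma(\epsilon))$ is genuinely smooth near $z$; with that in place, your sketch captures the intended argument.
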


\begin{proof}
    Identical argument to \cite[Lemma 3.1]{bellettini2020multiplicity1}, except we replace the Sheeting Theorem of \cite{Wickramasekera-general-regularity-varifolds} with the Sheeting Theorem of \cite{BW-Stable-CMC}.
\end{proof}

From this Lemma, the following result is immediate,

\begin{proposition}\label{prop: F is surjective onto N minus singular set of M}
    For all $y$ in $N \setminus (\overline{M} \setminus M)$, there exists an $x$ in $\tilde{M}$, such that $F(x, \tilde{d}(y)) = y$. 
\end{proposition}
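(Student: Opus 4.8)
The plan is to deduce this directly from the Geodesic Touching Lemma (Lemma \ref{lem: end points of length minimising geodesics to M are smooth points}) together with the definition of $\tilde{M}$ and the map $F$. First I would split into the two cases $y \in N \setminus \overline{M}$ and $y \in M$. In the second case, $y$ is by hypothesis a point of $M$ (hence, by Remark \ref{rem: graphical characterisation of points on M}, either an embedded or a non-embedded point of $M$, and in either case it has at least one unit normal $\nu_y$ pointing into $E$). Taking $x = (y, \nu_y) \in \tilde{M}$, we have $\iota(x) = y$ and, since $\tilde{d}(y) = 0$, $F(x, \tilde{d}(y)) = F(x, 0) = \exp_{\iota(x)}(0 \cdot \nu(x)) = y$, as required.

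For the main case $y \in N \setminus \overline{M}$, I would apply Lemma \ref{lem: end points of length minimising geodesics to M are smooth points} to obtain a length-minimising geodesic from $y$ to $\overline{M}$ whose endpoint $z$ on $\overline{M}$ is a quasi-embedded (hence smooth) point of $M$, and which meets $M$ orthogonally at $z$. Thus $d_{\overline{M}}(y) = |\tilde{d}(y)|$ is realised along the geodesic segment from $z$ to $y$, which leaves $z$ in a direction orthogonal to $T_z M$. Let $\nu_z$ be the unit normal at $z$ pointing into $E$, and set $x = (z, \nu_z) \in \tilde{M}$. The geodesic $t \mapsto F(x,t) = \exp_z(t\nu_z)$ is the unit-speed geodesic through $z$ in the direction $\nu_z$; the minimising geodesic from the lemma runs from $z$ to $y$ along this same geodesic line (up to sign of the direction), so $y = \exp_z(\tilde{d}(y)\,\nu_z) = F(x, \tilde{d}(y))$. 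The only point requiring a small argument is the sign: if $y \in E$ then $\tilde{d}(y) > 0$ and the minimising geodesic leaves $z$ into $E$, i.e.\ in the direction $+\nu_z$, so indeed $y = \exp_z(\tilde{d}(y)\nu_z)$; if $y \notin E$ then $\tilde{d}(y) < 0$ and the minimising geodesic leaves $z$ in the direction $-\nu_z$, so $y = \exp_z(|\tilde{d}(y)|(-\nu_z)) = \exp_z(\tilde{d}(y)\nu_z)$ again. (Here one uses the local structure of $M$ near the smooth point $z$ from Remark \ref{rem: graphical characterisation of points on M}, which identifies the two sides of $M$ near $z$ with $E$ and its complement and matches up the normal directions; the minimising property guarantees the geodesic cannot re-enter $\overline{M}$ before reaching $y$, so there is no ambiguity.)

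I expect the main (and only genuine) obstacle to be this bookkeeping of the sign of $\tilde{d}$ against the chosen normal orientation $\nu_z$ — i.e.\ checking carefully, using the graphical description of $M$ near the smooth endpoint $z$, that ``minimising geodesic into the side where $y$ lies'' corresponds to the correct sign of $t$ in $F(x,t)$. Everything else is a direct unwinding of definitions: the existence and smoothness of the touching point is exactly Lemma \ref{lem: end points of length minimising geodesics to M are smooth points}, orthogonality gives that the relevant direction is $\pm\nu_z \in T_zM^\perp$ so that $(z,\nu_z)$ genuinely lies in $\tilde{M}$, and $F$ is by construction the geodesic exponential in that normal direction.
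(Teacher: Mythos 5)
Your proof is correct and follows the same route as the paper: the paper simply remarks that the Proposition is immediate from the Geodesic Touching Lemma (Lemma \ref{lem: end points of length minimising geodesics to M are smooth points}), and your proposal fills in exactly the expected details — treating $y\in M$ trivially via $F((y,\nu_y),0)=y$, and for $y\in N\setminus\overline{M}$ invoking the Lemma to get an orthogonal, length-minimising geodesic to a quasi-embedded point $z\in M$, then matching the sign of $\tilde d(y)$ against the inward normal $\nu_z$ so that $y=\exp_z(\tilde d(y)\nu_z)=F((z,\nu_z),\tilde d(y))$. Your observation that the minimising property prevents the geodesic from re-entering $\overline M$, so that its interior stays entirely in $E$ or entirely in $N\setminus E$, is indeed the point that makes the sign bookkeeping work.
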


\begin{claim}
    The functions, $\sigma^{+}, \, \sigma^{-} \colon \tilde{M} \rightarrow \mathbb{R}$, are continuous.
\end{claim}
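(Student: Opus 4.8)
The plan is to prove continuity of $\sigma^{+}$ (the argument for $\sigma^{-}$ being symmetric) by combining lower and upper semicontinuity, both obtained from elementary properties of the $1$-Lipschitz function $\tilde d$ and the smooth dependence of the geodesic map $F$ on $(x,t)$. Fix $x_{0}\in\tilde M$ and a sequence $x_{k}\to x_{0}$ in $\tilde M$. Since $\sigma^{+}$ is uniformly bounded, after passing to a subsequence we may assume $\sigma^{+}(x_{k})\to\ell$ for some $\ell\ge 0$; it suffices to show $\ell=\sigma^{+}(x_{0})$ for every such subsequential limit.

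First I would establish $\ell\le\sigma^{+}(x_{0})$ (upper semicontinuity). By definition $\tilde d(F(x_{k},\sigma^{+}(x_{k})))=\sigma^{+}(x_{k})$. Since $F$ is continuous on $\tilde M\times\mathbb R$ and $\tilde d$ is continuous on $N$, passing to the limit gives $\tilde d(F(x_{0},\ell))=\ell$. By Claim~\ref{claim: interval from sigma plus to sigma minus contains all t which equality holds for}, the set $\{t:\tilde d(F(x_{0},t))=t\}$ equals $[\sigma^{-}(x_{0}),\sigma^{+}(x_{0})]$, so $\ell\le\sigma^{+}(x_{0})$. Conversely, for lower semicontinuity I would argue that for any fixed $t$ with $0\le t<\sigma^{+}(x_{0})$ one has $t\le\sigma^{+}(x_{k})$ for all large $k$, whence $t\le\ell$, and letting $t\uparrow\sigma^{+}(x_{0})$ yields $\sigma^{+}(x_{0})\le\ell$. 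To see $t\le\sigma^{+}(x_{k})$ for large $k$: the point is that $t<\sigma^{+}(x_{0})$ means $F(x_{0},t)$ lies in the \emph{interior} of the segment of $\gamma_{x_{0}}$ realising the distance to $\overline M$, so $F(x_{0},t)\notin\overline M$ and the geodesic from $\iota(x_{0})$ is genuinely minimising up to past $t$; I want to transfer this to nearby $x_{k}$.

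The technically delicate step — and what I expect to be the main obstacle — is exactly this transfer, because $\overline M$ is merely a closed set (with a possible singular set and quasi-embedded points) and the cut locus structure need not be uniform. Here I would proceed by contradiction: suppose along a subsequence $\sigma^{+}(x_{k})<t$ for a fixed $t<\sigma^{+}(x_{0})$. Then $\tilde d(F(x_{k},t))<t$, so by the $1$-Lipschitz bound on $s\mapsto\tilde d(F(x_{k},s))$ established in the proof of Claim~\ref{claim: interval from sigma plus to sigma minus contains all t which equality holds for}, there is $s_{k}\in[0,t]$ with $\tilde d(F(x_{k},s_{k}))=s_{k}$ and $\tilde d(F(x_{k},s))<s$ for $s\in(s_{k},t]$; moreover $F(x_{k},s_{k})\in\overline M$ (since $d_{\overline M}(F(x_{k},s_{k}))=|s_{k}|$ is attained, and by the Geodesic Touching Lemma \ref{lem: end points of length minimising geodesics to M are smooth points} the touching point is a quasi-embedded point hit orthogonally). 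Up to a further subsequence $s_{k}\to s_{\infty}\in[0,t]$, and by continuity $F(x_{0},s_{\infty})\in\overline M$ with $\tilde d(F(x_{0},s_{\infty}))=s_{\infty}$; but then $\gamma_{x_{0}}$ restricted to $[0,s_{\infty}]$ is a curve of length $s_{\infty}$ from $\iota(x_{0})$ to a point of $\overline M$, so $s_{\infty}=\tilde d(F(x_{0},0))=0$ would be forced only if $s_\infty$ realises the distance; the real content is that $F(x_0,s_\infty)\in\overline M$ with $s_\infty\le t<\sigma^+(x_0)$ contradicts Claim~\ref{claim: interval from sigma plus to sigma minus contains all t which equality holds for}, since every $s\in[0,\sigma^+(x_0)]$ satisfies $\tilde d(F(x_0,s))=s>0$ for $s>0$, forcing $F(x_0,s)\notin\overline M$ for $0<s\le t$, so necessarily $s_\infty=0$; but then for large $k$, $F(x_k,s_k)$ is a point of $\overline M$ arbitrarily close to $\iota(x_0)=F(x_0,0)$, while $\tilde d(F(x_k,0))\to\tilde d(F(x_0,0))=0$ as well — and one extracts a contradiction from the fact that $F(x_{0},\,\cdot\,)$ near $0$ leaves $\overline M$ transversally (the geodesic hits $M$ orthogonally at $\iota(x_0)$ by construction of $\tilde M$), so for small $s>0$ and $x$ near $x_{0}$ one has a definite lower bound $\tilde d(F(x,s))\ge s - o(s)>0$, ruling out $s_{k}\to 0$ with $F(x_{k},s_{k})\in\overline M$. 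This last transversality estimate is the crux, and it uses precisely that $M_\lambda$ is $C^2$ (regularity inherited from \cite{BW-inhomogeneous-allen-cahn}) so that $F$ is a local diffeomorphism near $\tilde M\times\{0\}$ onto a tubular neighbourhood, on which $\tilde d\circ F$ agrees with the coordinate $p$. Once both inequalities $\ell\le\sigma^{+}(x_{0})$ and $\sigma^{+}(x_{0})\le\ell$ are in hand, continuity of $\sigma^{+}$ follows; continuity of $\sigma^{-}$ is identical after reversing the sign of $t$.
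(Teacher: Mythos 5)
Your upper-semicontinuity argument is correct and essentially the same as the paper's, phrased slightly more directly: pass $k\to\infty$ in $\tilde d(F(x_k,\sigma^+(x_k)))=\sigma^+(x_k)$ and invoke Claim~\ref{claim: interval from sigma plus to sigma minus contains all t which equality holds for}. The paper instead fixes $t_0=\sigma^+(\hat x)+\delta$ and contradicts continuity of $\tilde d$ at $F(\hat x,t_0)$; the two versions are interchangeable.

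The lower-semicontinuity half, however, contains a genuine error at the step you yourself flag as delicate. You assert that $F(x_k,s_k)\in\overline M$ because ``$d_{\overline M}(F(x_k,s_k))=|s_k|$ is attained'' and by the Geodesic Touching Lemma. This is false whenever $s_k>0$: the identity $d_{\overline M}(F(x_k,s_k))=s_k$ says $F(x_k,s_k)$ is at distance $s_k$ \emph{from} $\overline M$, hence \emph{not} on $\overline M$. The Geodesic Touching Lemma speaks of the \emph{foot} of the minimising geodesic (a quasi-embedded point of $M$), which is a different point — here it is $\iota(x_k)$ itself, not $F(x_k,s_k)$. What is true is that $F(x_k,s_k)=F(x_k,\sigma^+(x_k))\in\text{Cut}(M)$, a set which is disjoint from $F(\tilde T)$. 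Once the claim $F(x_k,s_k)\in\overline M$ is removed, the conclusion $s_\infty=0$ has no basis, and the concluding transversality estimate (which is applied to rule out $F(x_k,s_k)\in\overline M$ with $s_k\to 0$) addresses a situation that doesn't arise, so the contradiction does not materialise.

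The missing idea is the one the paper uses: from $\sigma^+(x_n)<\alpha+\delta<\sigma^+(\hat x)$ and $z_n:=F(x_n,\alpha+\delta)$, apply Proposition~\ref{prop: F is surjective onto N minus singular set of M} to produce $\tilde x_n$ with $F(\tilde x_n,\tilde d(z_n))=z_n$ and $\tilde d(z_n)<\alpha+\delta$; combined with the uniqueness of the minimising geodesic from $M$ to $z:=F(\hat x,\alpha+\delta)$ (guaranteed by $\alpha+\delta<\sigma^+(\hat x)$ and the Geodesic Touching Lemma), one gets $\tilde x_n\to\hat x$, so $(x_n,\alpha+\delta)$ and $(\tilde x_n,\tilde d(z_n))$ are two \emph{distinct} preimages under $F$ of $z_n$ both converging to $(\hat x,\alpha+\delta)$. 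This contradicts the fact that $F$ is a local diffeomorphism at $(\hat x,\alpha+\delta)\in\tilde T$ (equivalently, that $\gamma_{\hat x}$ remains minimising past $\alpha+\delta$, \cite[Lemma 2.11]{sakai1996riemannian}). The structural point you correctly identified — that $F$ is a local diffeomorphism on $\tilde T$ — is the right tool, but it must be invoked together with the surjectivity of $F$ onto $N\setminus(\overline M\setminus M)$ to manufacture the two colliding preimages; your version of the argument does not produce them because the false membership claim short-circuits the construction.
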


\begin{proof}
    We prove by contradiction. 
    Suppose there exists an $\hat{x} \in \tilde{M}$ such that, $\liminf_{x \rightarrow \hat{x}} \sigma^{+} (x) = \alpha < \sigma^{+} (\hat{x})$. 
    Choose $0 < \delta < \sigma^{+} (\hat{x}) - \alpha$, then there exists $x_{n} \rightarrow \hat{x}$ in $\tilde{M}$ such that $\sigma^{+} (x_{n}) < \alpha + \delta$.
    Now consider the points,
    \begin{equation*}
        z_{n} = F(x_{n}, \alpha + \delta) \rightarrow z \coloneqq F(\hat{x}, \alpha + \delta).
    \end{equation*}
    By Claim \ref{claim: interval from sigma plus to sigma minus contains all t which equality holds for}, $\tilde{d} (z_{n}) < \alpha + \delta$.
    By Proposition \ref{prop: F is surjective onto N minus singular set of M} there exists a sequence $\tilde{x}_{n}$, such that, 
    \begin{equation*}
        F(\tilde{x}_{n}, \tilde{d}(z_{n})) = z_{n}.
    \end{equation*}
    After potentially taking a subsequence and renumerating we have that there exists a $y \in \overline{M}$, such that $\iota(\tilde{x}_{n}) \rightarrow y$, then note $d(y, z) = \tilde{d} (z) = \alpha + \delta$. 
    Therefore, by Lemma \ref{lem: end points of length minimising geodesics to M are smooth points}, $y \in M$, and as $t \mapsto F(\hat{x}, t)$ is the unique length minimising geodesic from $M$ to $z$, $\tilde{x}_{n} \rightarrow \hat{x}$ in $\tilde{M}$. 
    Now we have that, 
    \begin{equation*}
        F(x_{n}, \alpha + \delta) = z_{n} = F(\tilde{x}_{n}, \tilde{d}(z_{n})).
    \end{equation*}
    However, $(x_{n}, \alpha + \delta) \not= (\tilde{x}_{n}, \tilde{d}(z_{n}))$, and 
    \begin{equation*}
        \lim_{n \rightarrow \infty} (x_{n}, \alpha + \delta) = (\hat{x}, \alpha + \delta) = \lim_{n \rightarrow \infty} (\tilde{x}_{n}, \tilde{d}(z_{n})).
    \end{equation*}
    This implies that $F$ is not a diffeomorphism about the point $(\hat{x}, \alpha + \delta)$, and therefore by classical theory of geodesics, \cite[Lemma 2.11]{sakai1996riemannian}, $t \mapsto F(\hat{x}, t)$ is no longer length minimising to $M$ beyond time $t = \alpha + \delta$. 
    This contradicts $\alpha + \delta < \sigma^{+} (\hat{x})$.

    \bigskip 

    Now suppose that $\sigma^{+} (\hat{x}) < \limsup_{x \rightarrow \hat{x}} \sigma^{+} (x) = \beta < + \infty$. 
    Choose $0 < \delta < \beta - \sigma^{+} (\hat{x})$, and sequence $x_{n} \rightarrow \hat{x}$, such that, 
    \begin{equation*}
        \sigma^{+} (x_{n}) > \sigma^{+} (\hat{x}) + \delta.
    \end{equation*}
    Define, 
    \begin{equation*}
        z_{n} = F(x_{n}, \sigma^{+} (\hat{x}) + \delta),
    \end{equation*}
    then $\tilde{d}(z_{n}) = \sigma^{+} (\hat{x}) + \delta$. 
    By continuity of $F$, 
    \begin{equation*}
        z_{n} \rightarrow z \coloneqq F(\hat{x}, \sigma^{+} (\hat{x}) + \delta).
    \end{equation*}
    However, by definition of $\sigma^{+} (\hat{x})$, $\tilde{d} (z) < \sigma^{+} (\hat{x}) + \delta = \tilde{d} (z_{n})$. 
    This contradicts continuity of $\tilde{d}$. 

    \bigskip 

    Similar arguments show that $\sigma^{-}$ is also continuous. 
\end{proof}

\bigskip 

We define the Cut Locus of $M$ to be the following points in $N$, 
\begin{equation*}
    \text{Cut} \, (M) = \{ F(x, \sigma^{+} (x)) \colon x \in \tilde{M} \} \cup \{ F(x, \sigma^{-} (x)) \colon x \in \tilde{M} \} \subset N.
\end{equation*}
and by Proposition \ref{prop: F is surjective onto N minus singular set of M}, we have that,
\begin{equation*}
    N \setminus (\overline{M} \setminus M) = F(\tilde{T}) \cup \text{Cut} \, (M).
\end{equation*}

\begin{proposition}\label{prop: nabla tilde d lies in SBV}
    $\text{Cut} \, (M)$ is an $n$-rectifiable set.
\end{proposition}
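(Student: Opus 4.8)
The plan is to show that $\text{Cut}(M)$ can be written as a finite union (or, failing that, a countable union) of Lipschitz images of subsets of $\mathbb{R}^n$, which immediately gives $n$-rectifiability; together with the fact that $\text{Cut}(M)$ is closed (it contains no points of $\overline M \setminus M$ by Lemma \ref{lem: end points of length minimising geodesics to M are smooth points}, and is the image under the continuous map $F$ of the closed set $\{(x,\sigma^+(x))\} \cup \{(x,\sigma^-(x))\}$ in $\tilde M \times \mathbb{R}$ intersected with a compact region) this yields the statement. The key observation is that $\text{Cut}(M)$ is parametrised by the two maps $x \mapsto F(x,\sigma^{\pm}(x))$ from the smooth $n$-manifold $\tilde M$ into $N$, and that $\sigma^{\pm}$ were just shown to be \emph{continuous}; the difficulty is that continuity alone does not give rectifiability of the image, so one needs to upgrade this.

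First I would reduce to a local statement: cover $\tilde M$ by countably many coordinate charts $U \cong B^n \subset \mathbb{R}^n$, and on each chart work with $\Phi^{\pm}(x) = F(x, \sigma^{\pm}(x))$; it suffices to show each $\Phi^{\pm}(U)$ is $n$-rectifiable. Since $\sigma^+$ is bounded (by $2\,\text{diam}(N)$, say), the graph of $\sigma^+$ over $U$ lies in the compact set $\overline U \times [-2\,\text{diam}(N), 2\,\text{diam}(N)]$, and $F$ restricted to this compact set is smooth, hence Lipschitz. So it is enough to prove that the graph $\{(x,\sigma^+(x)) : x \in U\} \subset \mathbb{R}^{n+1}$ is an $n$-rectifiable subset of $\mathbb{R}^{n+1}$, because the Lipschitz image of an $n$-rectifiable set is $n$-rectifiable. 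The natural route here is to prove $\sigma^+$ is not merely continuous but has the structure of a function of \emph{bounded variation}, or better still is semiconcave / locally a minimum of smooth functions on $\tilde M$ minus a lower-dimensional bad set — the classical picture for distance cut loci. Concretely, away from the (lower-dimensional, and certainly $\mathcal{H}^n$-negligible by the discussion around Claim \ref{claim: CMC disks intersect on a set of measure 0 M, and both unit normals point into E} and the structure of $\overline M \setminus M$) set of "focal/conjugate" points, $\sigma^+$ is locally Lipschitz by the classical theory of cut loci for the distance to a (locally) smooth hypersurface — this is exactly the content of \cite[Lemma 2.11]{sakai1996riemannian} invoked in the previous proof. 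A locally Lipschitz function on $B^n$ has $n$-rectifiable graph, and one handles the remaining bad set separately by a dimension count.

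The cleanest implementation, which I expect the authors to take, is to invoke the known result that the cut locus of a compact embedded (or here, quasi-embedded) smooth hypersurface — or more generally the set of non-uniqueness/non-smoothness points of a signed distance function — is $\mathcal{H}^n$-rectifiable; this is standard for the distance to a point or smooth submanifold (see e.g. the SBV theory of distance functions, which is why the next Proposition is numbered in a way that refers to $\nabla \tilde d \in SBV$). Indeed $\tilde d$ is $1$-Lipschitz, hence $BV_{loc}$, with $|\nabla \tilde d| = 1$ a.e.; the jump set of $\nabla \tilde d$ is precisely where the length-minimising geodesic to $\overline M$ fails to be unique, which up to an $\mathcal{H}^n$-null set is $\text{Cut}(M)$, and the jump set of an $SBV$ function is $n$-rectifiable by the structure theorem for $BV$. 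So the proof would: (1) show $\nabla \tilde d \in BV_{loc}(N \setminus (\overline M \setminus M))$ with no Cantor part, i.e. $\nabla \tilde d \in SBV_{loc}$ — this uses semiconcavity of $\tilde d$ away from $\overline M$, which follows from the local smoothness of $M$ via Remark \ref{rem: graphical characterisation of points on M} and standard Riemannian comparison; (2) identify $S_{\nabla \tilde d}$ (the approximate jump set) with $\text{Cut}(M)$ modulo an $\mathcal{H}^n$-null set, again using Lemma \ref{lem: end points of length minimising geodesics to M are smooth points} to guarantee the touching point is a smooth point of $M$ and the geodesic hits orthogonally so that near a typical cut point the two distance realisations are smooth; (3) invoke the Federer–Vol'pert rectifiability of the jump set. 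The main obstacle is step (1)–(2): making precise, in the quasi-embedded setting with a possible singular set $\overline M \setminus M$ of dimension $\le n-7$, that $\tilde d$ really is semiconcave away from $\overline M$ and that the "problematic" points (cut points that are limits of focal behaviour, or that sit over the singular set) form an $\mathcal{H}^n$-null — indeed $n$-rectifiable — set. The dimension estimate $\dim_{\mathcal H}(\overline M \setminus M) \le n-7$ makes the contribution of the singular set harmless, and near the regular part $M$ the classical hypersurface cut-locus theory applies verbatim, so the argument goes through; but this bookkeeping is where the real work lies.
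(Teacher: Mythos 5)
Your ``cleanest implementation'' is essentially the paper's argument. The paper's proof is terser: it first splits $\text{Cut}(M)$ into $\text{Cut}(M)\cap M$ (which, by Proposition \ref{prop: Classification of points in Cut M}, consists precisely of the non-embedded points of $M$ and is therefore $\mathcal{H}^n$-null by Claim \ref{claim: CMC disks intersect on a set of measure 0 M, and both unit normals point into E}, hence trivially $n$-rectifiable) and $\text{Cut}(M)\setminus M$; for the latter it invokes the observation, already made in the proof of \cite[Proposition 3.1]{bellettini2020multiplicity1}, that once the Geodesic Touching Lemma (Lemma \ref{lem: end points of length minimising geodesics to M are smooth points}) is available the arguments of \cite[Theorem 4.10]{Mantegazza-Mennucci_2002} apply verbatim. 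Those arguments are precisely the SBV/semiconcavity route you describe: $\tilde d$ is locally semiconcave away from $\overline M$ because length-minimising geodesics always land on quasi-embedded (hence locally smooth) points of $M$, so $\nabla\tilde d$ is $SBV_{\text{loc}}$ and its jump set, which agrees with $\text{Cut}(M)\setminus M$ up to an $\mathcal{H}^n$-null set, is $n$-rectifiable by the structure theory of $BV$ functions. You even correctly read the intended mechanism from the proposition's label. Your first sketch, which attempts to upgrade continuity of $\sigma^{\pm}$ to local Lipschitz regularity away from a focal set and then pass to graphs, is a different route and would require separate work to make rigorous; the paper does not take it, and you rightly flagged it as the weaker of your two options.
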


To prove Proposition \ref{prop: nabla tilde d lies in SBV}, we first classify points in $\text{Cut} (M)$, 
\begin{proposition}\label{prop: Classification of points in Cut M}
    A point $y$ in $N \setminus (\overline{M} \setminus M)$, lies in $\text{Cut}(M)$ if and only if at least one of the following conditions holds: 
    \begin{enumerate}
        \item \label{point: F is non-invertible} $y$ lies in $N \setminus \overline{M}$, and there exists an $x$ in $\tilde{M}$ such that $F(x, \tilde{d}(y)) = y$, and $dF_{(x, \tilde{d}(y))} \colon T_{x} \tilde{M} \times \mathbb{R} \rightarrow T_{y} N$, is non-invertible. 
        \item \label{point: there exists atleast two unique geodesics} $y$ lies in $N \setminus \overline{M}$, and there exists at least two unique geodesics from $y$ to $\overline{M}$ which achieve the length $d_{\overline{M}}(y)$. 
        \item \label{point: non-embedded point of M} $y$ is a non-embedded point of $M$. 
    \end{enumerate}
\end{proposition}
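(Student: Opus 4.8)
The plan is to prove the two implications separately, each time reducing to the classical theory of the cut locus of a smooth submanifold (see e.g.\ \cite{sakai1996riemannian}), which is applicable \emph{locally} along the relevant geodesics thanks to the Geodesic Touching Lemma (Lemma \ref{lem: end points of length minimising geodesics to M are smooth points}) and the graphical description of $M$ in Remark \ref{rem: graphical characterisation of points on M}. I would freely use the decomposition $N \setminus (\overline{M} \setminus M) = F(\tilde{T}) \cup \text{Cut}\,(M)$, the identity $\tilde{d} \circ F = p$ on $\tilde{T}$, and Claim \ref{claim: interval from sigma plus to sigma minus contains all t which equality holds for}, i.e.\ $[\sigma^{-}(x), \sigma^{+}(x)] = \{ t : \tilde{d}(F(x,t)) = t \}$.

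For the implication ``$\Leftarrow$'' I would treat the three cases. If $y$ is a non-embedded point (case \ref{point: non-embedded point of M}), take $x = (y, \nu_{y})$ with $\nu_{y}$ the $E$-pointing normal of one of the two sheets of Remark \ref{rem: graphical characterisation of points on M}, case \ref{rem point: graphical rep of non-embedded point}, say the graph of $f_{1}$, with $f_{1} \ge f_{2}$, $f_{1}(0) = f_{2}(0) = 0$ and $\Delta_{T} f_{1}(0) = \lambda = -\Delta_{T} f_{2}(0)$. A short computation with the second fundamental forms shows that the geodesic $t \mapsto F(x,t)$ leaves $E$ on one side and re-enters $E$ \emph{immediately} on the far side of the second sheet, with $y$ itself realising the distance to $\overline{M}$ for small $|t|$; hence $\tilde{d}(F(x,t)) = -t \neq t$ for small $t < 0$, so $\sigma^{-}(x) = 0$ and $y = F(x, \sigma^{-}(x)) \in \text{Cut}\,(M)$. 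For cases \ref{point: F is non-invertible} and \ref{point: there exists atleast two unique geodesics} I would argue by contraposition: if $y \notin \text{Cut}\,(M)$ then $y = F(x, \tilde{d}(y))$ with $\tilde{d}(y) \in (\sigma^{-}(x), \sigma^{+}(x))$, so $t \mapsto F(x,t)$ realises $d_{\overline{M}}$ on an interval having $\tilde{d}(y)$ in its interior; the classical fact that there are no focal points and a unique distance-realising geodesic strictly before the cut point then gives that $dF_{(x,\tilde{d}(y))}$ is invertible and that $t \mapsto F(x,t)$ is the only distance-realising geodesic from $\overline{M}$ to $y$ — any such geodesic meets $M$ orthogonally at a regular point by Lemma \ref{lem: end points of length minimising geodesics to M are smooth points}, hence is of this form — contradicting case \ref{point: F is non-invertible}, respectively case \ref{point: there exists atleast two unique geodesics}.

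For ``$\Rightarrow$'', let $y = F(x, \sigma^{+}(x))$ (the case $y = F(x, \sigma^{-}(x))$ is symmetric) and set $s := \sigma^{+}(x)$. If $s = 0$ then $y = \iota(x) \in M$; were $y$ embedded, the normal exponential map of the local sheet of $M$ through $y$ would be a diffeomorphism near $y$, forcing $\tilde{d}(F(x,t)) = t$ for small $t > 0$ and hence $\sigma^{+}(x) > 0$, a contradiction — so $y$ is non-embedded, which is case \ref{point: non-embedded point of M}. If $s > 0$, then $\tilde{d}(y) = s > 0$, so $y \in E$ and in particular $y \in N \setminus \overline{M}$; using that $d_{\overline{M}}$ is $1$-Lipschitz together with $\tilde{d}(F(x,t)) = t$ on $[0,s]$, the geodesic $\gamma(t) = F(x,t)$ realises $d_{\overline{M}}$ exactly on $[0,s]$ and on no larger interval, i.e.\ $y$ is the cut point of $M$ along $\gamma$. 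Applying the classical cut-point dichotomy — to the smooth sheet of $M$ through $\iota(x)$, which carries the same focal data as $\overline{M}$ near $\iota(x)$ and to which $\gamma$ also realises the distance — one concludes that either $y$ is a focal point of $M$ along $\gamma$, equivalently $dF_{(x,s)}$ is non-invertible (case \ref{point: F is non-invertible}), or there is a second distance-realising geodesic from $\overline{M}$ to $y$ (case \ref{point: there exists atleast two unique geodesics}).

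The main obstacle is precisely that $\overline{M}$ is not a smooth submanifold, so the textbook cut-locus statements cannot be quoted verbatim: the genuine work lies in the localisation, i.e.\ checking that the Geodesic Touching Lemma forces every distance-realising geodesic in play to meet the smooth, quasi-embedded part $M$ orthogonally at a regular point, that realising $d_{\overline{M}}$ agrees locally with realising the distance to the smooth sheet of $M$ through that endpoint, and that the focal data of this sheet along $\gamma$ is detected exactly by the non-invertibility of $dF$. A secondary technical point is the explicit local computation at a non-embedded point, where one extracts from the graphical model that one of $\sigma^{\pm}(x)$ vanishes.
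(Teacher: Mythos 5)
Your proposal is correct and takes essentially the same approach as the paper: treat $y \in M$ directly from the local graphical picture of Remark~\ref{rem: graphical characterisation of points on M} (embedded point $\Rightarrow$ the normal exponential map is locally a diffeomorphism, so $\sigma^{\pm}$ are bounded away from $0$; non-embedded point $\Rightarrow$ $\exp_{z_0}(t\nu_1) \in E$ for small $t<0$, forcing $\sigma^{-}=0$), and for $y \in N\setminus\overline{M}$ reduce to the classical cut-locus dichotomy, made legitimate here by the Geodesic Touching Lemma~\ref{lem: end points of length minimising geodesics to M are smooth points} — precisely what the paper does by citing Sakai together with Proposition~3.1 of the first author's multiplicity-one paper. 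The only wording to tighten is your appeal to the dichotomy for the local sheet $P$ through $\iota(x)$ at time $\sigma^{+}(x)$: the $\overline{M}$-cut time may strictly precede the $P$-cut time, so one cannot literally quote the smooth-submanifold dichotomy for $P$ at that instant; instead, as in the paper's proof that $\sigma^{\pm}$ are continuous, one runs the standard compactness argument on $\overline{M}$ directly (limiting minimizing geodesics from $\gamma(\sigma^{+}(x)+1/n)$, with the Geodesic Touching Lemma guaranteeing they land on $\tilde{M}$), concluding failure of local injectivity of $F$ at $(x,\sigma^{+}(x))$ in the branch where the limit coincides with $\gamma$.
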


\begin{proof}
    Consider a point $y = F(x, 0) \in M$. 
    If $y$ is an embedded point of $M$, then case \ref{rem point: Graphical rep of embedded point} of Remark \ref{rem: graphical characterisation of points on M} holds, and there exists an $S > 0$, such that $\overline{M} \cap B_{S}(y)$ is a smooth, embedded CMC disk. 
    Therefore, (\cite[Proposition 4.2]{Mantegazza-Mennucci_2002}) there exists an $r$ in $(0, S /2)$, such that for all $t$ in $(-r,r)$, $\tilde{d}(F(x,t)) = t$.
    Therefore, if $y \in M \cap \text{Cut} (M)$, then $y$ must be a non-embedded point. 

    \bigskip 

    Alternatively, if $y$ is a non-embedded point then case \ref{rem point: graphical rep of non-embedded point} of Remark \ref{rem: graphical characterisation of points on M} holds, and $(y, \nu)$ and $(y, - \nu)$ both lie in $\tilde{M}$.  
    Moreover, for $t \in (- s, 0)$, $t < f_{2} (0)$, implying that $F((y, \nu), t) = \exp_{y} (t \nu)$ lies in $E$. 
    Therefore, $\tilde{d} (F((y, \nu), t)) \geq 0$, implying that $\sigma^{-} (y, \nu) = 0$, and thus $y$ is a point in $\text{Cut} (M)$. 

    \bigskip 

    For $y \in N \setminus \overline{M}$, the conclusion follows from standard theory of geodesics, see \cite{sakai1996riemannian}. 
    We can use this classical theory in our setting by Lemma \ref{lem: end points of length minimising geodesics to M are smooth points}. 
    This observation is seen \cite[Proposition 3.1]{bellettini2020multiplicity1}.
\end{proof}

\begin{rem}\label{rem: Cut M and F tilde T are disjoint in N}
    By point \ref{point: there exists atleast two unique geodesics} of Proposition \ref{prop: Classification of points in Cut M}, $F(\tilde{T})$ and $\text{Cut} \, (M)$ must be disjoint.
    Therefore, by point \ref{point: F is non-invertible} of Proposition \ref{prop: Classification of points in Cut M}, $F$ must be a local diffeomorphism on $\tilde{T}$. 
    Moreover, by point \ref{point: there exists atleast two unique geodesics}, $F \colon \tilde{T} \rightarrow F(\tilde{T})$ is a bijection.
\end{rem}

\begin{proof}
    (of Proposition \ref{prop: nabla tilde d lies in SBV})
    As $\text{Cut} (M) \cap M$ consists of non-embedded points of $M$, by Claim \ref{claim: CMC disks intersect on a set of measure 0 M, and both unit normals point into E} we have $\mathcal{H}^{n} (\text{Cut} (M) \cap M) = 0$. 
    Therefore, to prove that $\text{Cut} (M)$ is rectifiable, we just need to concern ourselves with $\text{Cut} (M) \setminus M$. 
    This follows from the observation made in the proof of \cite[Proposition 3.1]{bellettini2020multiplicity1}, that as Lemma \ref{lem: end points of length minimising geodesics to M are smooth points} holds, then the arguments in \cite[Theorem 4.10]{Mantegazza-Mennucci_2002} hold verbatim. 
\end{proof}

\begin{rem}
    As $M$ is smooth, we have that $\tilde{d}$ is smooth in $F(\tilde{T})$, \cite[Proposition 4.2]{Mantegazza-Mennucci_2002}.
\end{rem}

Denoting $h = F^{*} g$, we have that $F \colon (\tilde{T}, h) \rightarrow (F(\tilde{T}), g)$, is a bijective, local isometry.

\bigskip 

Consider the projection map, 
\begin{eqnarray*}
    p \colon \tilde{M} \times \mathbb{R} &\rightarrow& \mathbb{R}, \\
    (x,t) &\mapsto& t.
\end{eqnarray*}
In $\tilde{T}$, we have that $p = \tilde{d} \circ F$, and
\begin{equation*}
    |\nabla p (x,t)|_{h} = |\nabla \tilde{d} (F(x,t))|_{g} = 1.
\end{equation*}
We denote the sets, 
\begin{equation*}
    \tilde{\Gamma}_{t} = p^{- 1} (t) \cap \tilde{T},
\end{equation*}
and, 
\begin{equation*}
    \Gamma_{t} = \tilde{d}^{-1} (t) \subset N.
\end{equation*}
Note,
\begin{equation*}
    F(\tilde{\Gamma}_{t}) = \begin{cases}
        \Gamma_{t} \cap F(\tilde{T}) = \Gamma_{t} \setminus \text{Cut}(M), & t \not= 0, \\
        \{ \text{embedded points of } M \}, & t = 0.
    \end{cases}
\end{equation*}
Denote $H_{\tilde{\Gamma}_{t}} (x,t)$ as the scalar mean curvature of $\tilde{\Gamma}_{t}$, at $(x,t)$, with respect to unit normal $\nabla p (x,t)$, and define the following function, 
\begin{eqnarray*}
    H_{t} \colon \tilde{M} &\rightarrow& \mathbb{R}, \\
    x &\mapsto& \begin{cases} 
        H_{\tilde{\Gamma}_{t}} (x,t), & (x,t) \in \tilde{T}, \\
        0, & (x,t) \not\in \tilde{T},
    \end{cases}
\end{eqnarray*}
For $(x,t)$ in $\tilde{T}$, we have, 
\begin{equation*}
    H_{t} (x) = - \text{tr}_{T_{(x,t)} \tilde{\Gamma}_{t}} h(\nabla_{\cdot} \, \nabla p (x,t), \, \cdot \,) = - \Delta_{\tilde{\Gamma}_{t}} p (x,t).
\end{equation*}
However, as $\nabla p$ is a geodesic vector field
\begin{equation*}
    \nabla_{\nabla p} \nabla p = 0,
\end{equation*}
and as $|\nabla p| = 1$,
\begin{equation*}
    h(\nabla_{X} \nabla p, \nabla p) = \frac{1}{2} X(|\nabla p|) = 0.
\end{equation*}
Therefore, $\Delta_{\tilde{\Gamma}_{t}} p (x,t) = \Delta_{\tilde{T}} p (x,t)$, and thus for $(x,t)$ in $\tilde{T}$,
\begin{equation*}
    H_{t} (x) = - \Delta p (x,t).
\end{equation*}

\begin{proposition}\label{prop: derivative of H t is negative}
    (\cite[Corollary 3.6]{gray2012tubes})
    For $(x,t)$ in $\tilde{T}$, 
    \begin{equation*}
        \partial_{t} H_{t} (x) = - \nabla p (\Delta p) (x,t) \geq m,
    \end{equation*}
    where $m = \inf_{|X| = 1} \text{Ric}_{g} (X,X) > 0$.
\end{proposition}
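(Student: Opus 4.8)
The plan is to identify $\partial_t H_t$ with the quantity governed by the Riccati equation for the distance tubes about $M$, and to obtain the stated bound by discarding a nonnegative term and inserting the Ricci lower bound $m$. Concretely, I would run the argument through Bochner's formula applied to the function $p$ on $(\tilde{T},h)$.

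Everything is legitimate because $\tilde{d}$ is smooth on $F(\tilde{T})$ and $F\colon(\tilde{T},h)\to(F(\tilde{T}),g)$ is a bijective local isometry with $h=F^{*}g$; hence $p=\tilde{d}\circ F$ is smooth on $\tilde{T}$ and $|\nabla p|_{h}\equiv 1$. The first equality in the statement is then immediate: $t\mapsto F(x,t)$ is the integral curve of the geodesic field $\nabla p$, and on $\tilde{T}$ we have $H_{t}(x)=-\Delta p(x,t)$ (as computed just before the statement), so $\partial_{t}H_{t}(x)=-\nabla p(\Delta p)(x,t)$.

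For the inequality, apply Bochner's formula to $p$ on $(\tilde{T},h)$:
\[
\tfrac12\,\Delta|\nabla p|^{2}=|\Hess p|^{2}+h\bigl(\nabla p,\nabla(\Delta p)\bigr)+\Ric_{h}(\nabla p,\nabla p).
\]
Since $|\nabla p|^{2}\equiv 1$, the left-hand side vanishes and rearranging gives
\[
-\nabla p(\Delta p)=|\Hess p|^{2}+\Ric_{h}(\nabla p,\nabla p)\ \ge\ \Ric_{h}(\nabla p,\nabla p).
\]
Because $F$ is a local isometry, $\Ric_{h}(\nabla p,\nabla p)(x,t)=\Ric_{g}(\nabla\tilde{d},\nabla\tilde{d})(F(x,t))\ge m$, using $|\nabla\tilde{d}|_{g}=1$ and the definition of $m$. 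Hence $\partial_{t}H_{t}(x)=-\nabla p(\Delta p)(x,t)\ge m$, as claimed. Alternatively one may cite Gray, \emph{Tubes}, Corollary~3.6 directly: the $\tilde{\Gamma}_{t}$ are the distance tubes of radius $t$ about the embedded part of $M$ inside $(\tilde{T},h)$, and that result is exactly the Riccati identity $\partial_{t}H_{t}=|S_{t}|^{2}+\Ric(\nabla p,\nabla p)$ for the shape operator $S_{t}$ of $\tilde{\Gamma}_{t}$; here $|S_{t}|^{2}=|\Hess p|^{2}$ since the unit geodesic field $\nabla p$ contributes nothing to $\Hess p$, by the identity $h(\nabla_{X}\nabla p,\nabla p)=\tfrac12 X|\nabla p|^{2}=0$ already observed above.

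There is no real obstacle here. The only points deserving a word of care are the smoothness of $p$ on $\tilde{T}$ — already established, so that Bochner's identity holds pointwise — and the harmless remark that the normal direction does not contribute to $\Hess p$; the stated inequality does not even use the latter, as we merely drop the nonnegative term $|\Hess p|^{2}$.
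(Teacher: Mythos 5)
Your proof is correct and gives exactly the content that the paper sources to Gray, \emph{Tubes}, Corollary 3.6, without proof. The Bochner identity applied to a function with $|\nabla p|^{2}\equiv 1$ is precisely the Riccati relation $-\nabla p(\Delta p)=|\mathrm{Hess}\,p|^{2}+\mathrm{Ric}_{h}(\nabla p,\nabla p)$ for the level sets of the signed distance, so dropping the nonnegative $|\mathrm{Hess}\,p|^{2}$ term and using that $F$ is a local isometry between $(\tilde T,h)$ and $(F(\tilde T),g)$ yields $\geq m$; this matches the cited result.
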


\begin{rem}\label{rem: Bounds on mean curvature of level sets}
    Consider fixed $x$ in $\tilde{M}$. 
    For $\sigma^{-} (x) < 0$, we have $H_{0} (x) = \lambda$.
    If $\sigma^{-} (x) = 0$, we still have, 
    \begin{equation*}
        \lim_{t \searrow 0} H_{t} (x) = \lambda.
    \end{equation*}
    Thus, by Proposition \ref{prop: derivative of H t is negative}, we have for $(x,t) \in \tilde{T}$, 
    \begin{equation*}
        \begin{cases}
            H_{t} (x) \geq \lambda + m \, t, & t > 0, \\
            H_{0} (x) = \lambda, & \\
            H_{t} (x) \leq \lambda + m \, t, & t < 0.
        \end{cases}
    \end{equation*}
\end{rem}

\subsection{Area Element}\label{subsec: Area Element}

We define the function on $\tilde{M}$, 
\begin{eqnarray*}
    \theta_{t} (x) = \begin{cases}
        J_{\Pi_{t}} (x), & (x,t) \in \tilde{T}, \\
        0, & (x,t) \not\in \tilde{T},
    \end{cases}
\end{eqnarray*}
where, $J_{\Pi_{t}}$ is the Jacobian of the map $\Pi_{t} \colon x \in \tilde{M} \mapsto (x,t) \in \tilde{M} \times \mathbb{R}$.
By the Area Formula, 
\begin{equation*}
    \int_{\tilde{M}} \theta_{t} \, d \mathcal{H}^{n} = \mathcal{H}^{n} (\tilde{\Gamma}_{t}).
\end{equation*}

\bigskip

\begin{proposition}\label{claim: derivative of log area element}
    (\cite[Theorem 3.11]{gray2012tubes})
    For $(x_{0}, t_{0})$ in $\tilde{T}$,
    \begin{equation*}
        \partial_{t} \, \log(\theta_{t}) (x_{0})_{|t = t_{0}} = - H_{t_{0}}(x_{0}).
    \end{equation*}
\end{proposition}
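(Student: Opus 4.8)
The plan is to relate the area element $\theta_t(x)$ to the quantity $H_t(x) = -\Delta p(x,t)$ via a first-variation (divergence) computation, and then invoke the structure of $\nabla p$ as a geodesic unit vector field. Concretely, for fixed $x_0 \in \tilde M$ and $t_0$ with $(x_0,t_0) \in \tilde T$, I work in a neighborhood in $\tilde T$ where $F$ (equivalently $p$) is smooth and $\tilde T$ is an open subset of $\tilde M \times \mathbb R$. The map $\Pi_t \colon x \mapsto (x,t)$ parametrizes $\tilde\Gamma_t$, and the family $t \mapsto \Pi_t$ is a normal variation of the hypersurface $\tilde\Gamma_{t_0}$ with variation vector field $\partial_t$, which on $\tilde T$ equals $\nabla p$ (this is the content of ``$\nabla p$ is a geodesic unit vector field'' together with $|\nabla p| = 1$, so $\partial_t$ is orthogonal to the level sets $\tilde\Gamma_t$).

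The key step is the first variation of area formula: if $\{\Sigma_t\}$ is a smooth family of hypersurfaces with unit normal variation field $X$ (here $X = \nabla p$, $|X| = 1$) and scalar mean curvature $H_{\Sigma_t}$ with respect to that normal, then the pulled-back volume form $d\mathrm{vol}_{\Sigma_t}$ evolves as $\partial_t (d\mathrm{vol}_{\Sigma_t}) = -H_{\Sigma_t}\, d\mathrm{vol}_{\Sigma_t}$, with the sign conventions matching those used in the paper (mean curvature $\lambda$ for $\Gamma_0$, $\delta V = -H_V \mu_V$, $H_t = -\Delta_{\tilde\Gamma_t} p$). Since by the Area Formula $\theta_t(x) = J_{\Pi_t}(x)$ is exactly the density of $\Pi_t^*(d\mathrm{vol}_{\tilde\Gamma_t})$ with respect to $d\mathcal H^n$ on $\tilde M$, differentiating in $t$ at $t = t_0$ gives
\begin{equation*}
    \partial_t \theta_t(x_0)\big|_{t = t_0} = -H_{t_0}(x_0)\, \theta_{t_0}(x_0),
\end{equation*}
and dividing by $\theta_{t_0}(x_0) > 0$ (which holds since $F$ is a local isometry on $\tilde T$, so the Jacobian is positive) yields $\partial_t \log \theta_t(x_0)\big|_{t=t_0} = -H_{t_0}(x_0)$, as claimed. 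An alternative, more self-contained route is to pull everything back to $\tilde T$ with the metric $h = F^*g$, write $\theta_t$ in terms of $\sqrt{\det h_{ij}}$ in Fermi-type coordinates $(y^1,\dots,y^n,t)$ adapted to $\nabla p$ (so that $h = h_{ij}(y,t)\,dy^i dy^j + dt^2$), and use $\partial_t \log\sqrt{\det h_{ij}} = \tfrac12 h^{ij}\partial_t h_{ij}$, recognizing the right-hand side as $-H_t$ by the standard formula for the second fundamental form of level sets, $\mathrm{II}_{ij} = -\tfrac12 \partial_t h_{ij}$.

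The only real subtlety — and the point I would be most careful about — is the bookkeeping of signs and the fact that this is a pointwise identity valid throughout the open set $\tilde T$, even though $\tilde T$ has irregular boundary and $F$ degenerates on $\mathrm{Cut}(M)$; but since the statement is entirely local at interior points $(x_0,t_0) \in \tilde T$, where $F$ is a local diffeomorphism (Remark on $\tilde T$) and $p$ is smooth, these global pathologies are irrelevant. I would simply cite \cite[Theorem 3.11]{gray2012tubes} for the computation, noting that it is the classical evolution equation for the volume element of tubes/level sets, translated to the present notation where the role of the ``distance to the base hypersurface'' is played by $p = \tilde d \circ F$ and the mean curvature convention is the one fixed earlier in Section \ref{subsec: abstract cylinder}. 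No step here is genuinely hard; the main obstacle is purely one of matching conventions so that Proposition \ref{prop: derivative of H t is negative} and Proposition \ref{claim: derivative of log area element} combine correctly in the subsequent energy computations (indeed, together they will give $\partial_t^2 \log\theta_t = -\partial_t H_t \le -m < 0$, i.e.\ log-concavity of the area element, which is what drives the Ricci-positivity argument).
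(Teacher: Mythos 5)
The paper states this proposition as a citation to \cite[Theorem 3.11]{gray2012tubes} and gives no proof, so there is no ``paper's approach'' to compare against. Your argument is the standard and correct derivation: $\partial_t$ coincides with $\nabla p$ on $\tilde T$ (Gauss lemma plus $|\nabla p|\equiv 1$), the first variation of the Jacobian $\theta_t$ under the normal flow by $\nabla p$ is $\partial_t\theta_t = \operatorname{div}_{\tilde\Gamma_t}(\nabla p)\,\theta_t = \bigl(\operatorname{tr}_{T\tilde\Gamma_t}\operatorname{Hess} p\bigr)\theta_t$, and by the paper's convention $H_t = -\operatorname{tr}_{T\tilde\Gamma_t}\operatorname{Hess} p$ this gives exactly $\partial_t\log\theta_t = -H_t$; positivity of $\theta_t$ on $\tilde T$ (local diffeomorphism, Remark \ref{rem: Cut M and F tilde T are disjoint in N}) justifies taking the logarithm. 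Your Fermi-coordinate alternative via $\partial_t\log\sqrt{\det h_{ij}} = \tfrac12 h^{ij}\partial_t h_{ij}$ reaches the same conclusion, and the locality remark disposing of $\operatorname{Cut}(M)$ and the singular set is exactly the right thing to say. This is precisely the content of Gray's theorem, so the proposal is correct.
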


\bigskip 

Consider a fixed point $(x_{0}, t_{0})$ in $\tilde{T}$. 
First, consider $t_{0} \geq 0$.
For all $t$ in $(0, t_{0}]$, $(x_{0}, t)$ lies in $\tilde{T}$, which implies that the function $t \mapsto \theta_{t} (x_{0})$ is smooth on the interval $(0, t_{0}]$.
Furthermore, $\lim_{t \rightarrow 0^{+}} \theta_{t} (x_{0}) = 1$, and applying the Fundamental Theorem of Calculus,
\begin{equation*}
    \log (\theta_{t_{0}}(x_{0})) = - t_{0} \left( \lambda + \frac{1}{2} m t_{0} \right).
\end{equation*} 
Therefore,
\begin{equation*}
    \theta_{t_{0}}(x_{0}) \leq e^{-t_{0} \left( \lambda + \frac{1}{2} m t_{0} \right)}.
\end{equation*}
Identical inequality holds for $t_{0} \leq 0$.

\bigskip 

The term $- t (\lambda + \frac{1}{2} mt)$ achieves a global maximum at $t = - \frac{\lambda}{m}$.
Noting that for $(x_{0}, t_{0})$ not in $\tilde{T}$, $\theta_{t_{0}}(x_{0}) = 0$, we have that,
\begin{equation*}
   0 \leq \theta_{t_{0}}(x_{0}) \leq e^{\frac{\lambda^{2}}{2 m}},
\end{equation*}
for all $(x_{0}, t_{0})$ in $\tilde{M} \times \mathbb{R}$.

\subsection{Construction About Non-Embedded point}\label{subsec: construction about non-embed point}

Let $z_{0}$ in $M$ be a non-embedded point. 

\begin{rem}\label{rem: defining tilde d 1 and tilde d 2}
    We are in case \ref{rem point: graphical rep of non-embedded point} of Remark \ref{rem: graphical characterisation of points on M}.
    We can choose $\delta = \delta(z_{0}, M, N, g)$ such that, 
    \begin{equation*}
        B_{2 \delta} (z_{0}) \subset C_{z_{0}, T, r, s}.
    \end{equation*}
    We have three disjoint sets, 
    \begin{eqnarray*}
        && E_{1} \coloneqq \exp_{z} (\{ X + t \nu \colon X \in B_{z_{0}, T, r}, \, f_{1} (X) < t < s \}) \cap B_{2 \delta}^{N} (z_{0}), \\
        && F \coloneqq \exp_{z} (\{ X + t \nu \colon X \in B_{z_{0}, T, r}, \, f_{2} (X) \leq t \leq f_{1} (X) \}) \cap B_{2 \delta}^{N} (z_{0}), \\
        && E_{2} \coloneqq \exp_{z} (\{ X + t \nu \colon X \in B_{z_{0}, T, r},\, - s <t < f_{2} (X) \}) \cap B_{2 \delta}^{N} (z_{0}). \\
    \end{eqnarray*}
    As $\partial E_{i} \cap B_{2 \delta}^{N} (z_{0}) = \exp_{z} (\{ \text{Graph} \, (f_{i}) \}) \cap B_{2 \delta}^{N} (z_{0}) \eqqcolon D_{i}$, the following signed distance functions are well-defined for $i = 1,2$,
    \begin{eqnarray*}
        \tilde{d}_{i} \colon B_{2 \delta}^{N} (z_{0}) &\rightarrow& \mathbb{R}, \\
        y &\mapsto& \begin{cases}
            d_{D_{i}} (y), & y \in E_{i}, \\
            - d_{D_{i}} (y), & y \in B_{2 \delta}^{N} (z_{0}) \setminus E_{i}.
        \end{cases} 
    \end{eqnarray*}
    For $y$ in $B_{\delta}^{N} (z_{0}) \subset \subset B_{2 \delta}^{N} (z_{0})$, 
    \begin{equation*}
        \tilde{d} (y) = \max \{ \tilde{d}_{1} (y), \tilde{d}_{2} (y) \}. 
    \end{equation*}
    Furthermore, by \cite[Proposition 4.2]{Mantegazza-Mennucci_2002}, we may choose $\delta > 0$, such that $\tilde{d}_{1}$ and $\tilde{d}_{2}$ will be smooth on $B^{N}_{2 \delta} (z_{0})$.
\end{rem}

For $i = 1, \, 2$, we define 
\begin{equation*}
    \tilde{D}_{i} \coloneqq \{ (z, \nu_{i} (z)) \colon z \in D_{i}\} \subset \tilde{M},
\end{equation*}
and points $x_{0}^{i} = (z_{0}, \nu_{i} (z_{0}))$.

\begin{rem}\label{rem: diffeos F i}
    We make a choice of $\delta = \delta (N, M, g, z_{0}) > 0$ small enough such that, for each $i = 1, \, 2$, we have open sets $\tilde{V}_{i} \subset \tilde{M} \times \mathbb{R}$, and maps, 
    \begin{equation*}
        F_{i} \colon \tilde{V}_{i} \rightarrow B_{2 \delta}^{N} (z_{0}),
    \end{equation*}
    such that $\tilde{D}_{i} = \tilde{V}_{i} \cap \{t = 0\}$, and  $F_{i} = F_{|\tilde{V}_{i}}$, is a diffeomorphism. 
    We also insist that $\delta = \delta (N, M, g, z_{0}) > 0$, is chosen small enough such that $\text{Cut}(D_{1})$ and $\text{Cut}(D_{2})$ are empty in $B_{2 \delta} (z_{0})$.
    We know we can pick such a $\delta > 0$ by \cite[Proposition 4.2]{Mantegazza-Mennucci_2002}
\end{rem}

By choice of $\delta > 0$ in Remark \ref{rem: diffeos F i}, and Proposition \ref{prop: Classification of points in Cut M}, 
    \begin{equation*}
        \text{Cut} (M) \cap B_{\delta}^{N} (z_{0}) = \{ y \in B_{\delta}^{N} (z_{0}) \colon \tilde{d}_{1} (y) = \tilde{d}_{2} (y) \} \subset B_{\delta}^{N} (z_{0}) \setminus E.
    \end{equation*}

\begin{rem}\label{rem: choice of delta for set d 1 = d 2}
    Denote the set, 
    \begin{equation*}
        A = \{ y \in B_{2 \delta}^{N} (z_{0}) \colon \tilde{d}_{1} (y) = \tilde{d}_{2} (y) \}.
    \end{equation*}
    For $i = 1, \, 2$, we consider the functions, 
    \begin{eqnarray*}
        \psi_{i} \colon \tilde{V}_{i} &\rightarrow& \mathbb{R}, \\
        (x,t) &\mapsto& \tilde{d}_{1} (F_{i} (x,t)) - \tilde{d}_{2}(F_{i} (x,t)).
    \end{eqnarray*}
    Therefore, $A = F_{i} (\{\psi_{i} = 0\})$.
    Moreover, 
    \begin{equation*}
        \partial_{t} \psi_{i} (x_{0}^{i}, 0) = dF_{i}^{-1} (\nabla \tilde{d}_{1} (z_{0})) - d F_{i}^{-1} (\nabla \tilde{d}_{2} (z_{0})) = 2 \partial_{t} \not= 0.
    \end{equation*}
    Thus, by Implicit Function Theorem we may choose $\delta = \delta (z_{0}, N, M, g) > 0$, such that set $A = \text{Cut} \, (M) \cap B_{2 \delta}^{N} (z_{0})$ is a smooth $n$-submanifold in $B_{2 \delta}^{N} (z_{0})$, and $\sigma^{-}$ is smooth on $\tilde{D}_{1} \cup \tilde{D}_{2}$.
\end{rem} 

\bigskip 

We now look to define the push out function to construct our competitor, Figure \ref{fig: local pictures showing construction of competitor in abstract cylinder}.

\bigskip

We wish to determine the amount we want to push out by, and the set we wish to push out on.
Fix $\rho > 0$, and we set $l = l(\rho)$, to be,
\begin{equation}\label{eqn: def of l}
    l (\rho) = \sup \{ t \colon \, for \, all \, x \, in \, B_{t}^{\tilde{M}}(x_{0}^{1}), \, |\sigma^{-}(x)| < \rho\}.
\end{equation}
Here, $B_{t}^{\tilde{M}}(x)$ is the geodesic ball in $\tilde{M}$, about point $x$, of radius $t$.
As $\sigma^{-}$ is smooth about $x_{0}^{1}$, and $\sigma^{-}(x_{0}^{1}) = 0$, this implies that $l(\rho) > 0$ for all $\rho > 0$.
Furthermore, $l (\rho)$ is increasing in $\rho$, implying that the limit of $l(\rho)$, as $\rho \rightarrow 0$, exists. 
Therefore, as $\sigma^{-}(x) = 0$ if and only if $\iota(x)$ is a non-embedded point, and such points have $\mathcal{H}^{n}$-measure 0 in $\tilde{M}$, we have that this limit must be 0.

\begin{rem}\label{rem: choice of delta so that sigma minus has lower quadratic bound}
    As $\sigma^{-}$ is smooth on $\tilde{D}_{1}$, $\sigma^{-} \leq 0$, and $\sigma^{-} (x_{0}^{1}) = 0$, then there exists a $C_{1} = C_{1} (N, M, g, z_{0}) < + \infty$, and a $\delta = \delta (N, M, g, z_{0})$, such that for all $x$ in $\tilde{D}_{1}$,
    \begin{equation*}
        \sigma^{-} (x) \geq - C_{1} d_{\tilde{M}}^{2} (x, x_{0}^{1}).
    \end{equation*}
\end{rem}

\bigskip

As $l(\rho) \rightarrow 0$, as $\rho \rightarrow 0$, this implies that we can choose $\rho > 0$, Remark \ref{rem: choice of rho so that B l fits into D 1}, such that 
\begin{equation*}
    B_{l (\rho)}^{\tilde{M}} (x_{0}^{1}) \subset \subset \tilde{D}_{1}.
\end{equation*} 
There exists an $x'$ in $\tilde{D}_{1}$, such that $d_{\tilde{M}} (x', x_{0}^{1}) = l$, and $\sigma^{-}(x') = - \rho$. 
Therefore, by Remark \ref{rem: choice of delta so that sigma minus has lower quadratic bound}, 
\begin{equation*}
    \rho \leq C_{1} l^{2}.
\end{equation*}

\begin{rem}\label{rem: choice of rho so that B l fits into D 1}
    Note that we have made our first choice of $\rho = \rho (z_{0}, N, M, g, \delta)$.
\end{rem}

We push out on disks $D_{1}$ and $D_{2}$ equally, so that they meet on the Cut Locus in $B_{\delta}^{N} (z_{0})$, which is our previously denoted set $A$, as seen in Figure \ref{fig: local pictures showing construction of competitor in abstract cylinder}.
We consider the open sets $\tilde{W}_{i} \subset \tilde{D}_{i}$, defined by, 
\begin{equation*}
    \tilde{W}_{i} = \{ x \colon F_{i}(x, \sigma^{-}(x)) \in B_{\delta}(z_{0})\}.
\end{equation*}
Clearly $x_{0}^{i}$ lies in $\tilde{W}_{i}$, therefore these sets are non-empty.
We can then define a diffeomorphism between $\tilde{W}_{1}$, and $\tilde{W}_{2}$. 
\begin{eqnarray*}
    \Psi \colon \tilde{W}_{1} &\rightarrow& \tilde{W}_{2}, \\
    x &\mapsto& (\pi \circ F_{2}^{-1} \circ F_{1} \circ (\text{id}, \sigma^{-})) (x),
\end{eqnarray*}
where we define, $\pi$ by, 
\begin{eqnarray*}
    \pi \colon \tilde{M} \times \mathbb{R} &\rightarrow& \tilde{M}, \\
    (x,t) &\mapsto& x,
\end{eqnarray*}
and $(\text{id}, \sigma^{-})$, by 
\begin{eqnarray*}
    (\text{id}, \sigma^{-}) \colon \tilde{M} &\rightarrow& \tilde{M} \times \mathbb{R}, \\
    x &\mapsto& (x, \sigma^{-} (x)).
\end{eqnarray*}
The function $\Psi$ is smooth and has smooth inverse given by  
\begin{eqnarray*}
    \Psi^{-1} \colon \tilde{W}_{2} &\rightarrow& \tilde{W}_{1}, \\
    x &\mapsto& (\pi \circ F_{1}^{-1} \circ F_{2} \circ (\text{id}, \sigma^{-})) (x).
\end{eqnarray*}
We note that, $d \Psi_{x_{0}^{1}} = Id$.

\begin{rem}\label{rem: choice of rho so that B 2 l x 0 1 lies in W 1}
    We choose $\rho = \rho (z_{0}, N, M, g, \delta) > 0$, such that, 
\begin{equation*}
    B_{2 l}^{\tilde{M}}(x_{0}^{1}) \subset \subset \tilde{W}_{1}.
\end{equation*}
\end{rem}

\bigskip

Consider a \textit{push out function}, which lies in $C_{c}^{\infty}(\tilde{D}_{1})$, and has the following properties,
\begin{equation*}
    f_{1} (x) = 
    \begin{cases}
        -1, & x \in B_{l}^{\tilde{M}}(x_{0}^{1}), \\
        [-1,0], & x \in B_{2l}^{\tilde{M}} (x_{0}^{1}) \setminus B_{l}^{\tilde{M}} (x_{0}^{1}), \\
        0, & x \in \tilde{D}_{1} \setminus B_{2 l}^{\tilde{M}}(x_{0}^{1}).
    \end{cases}
\end{equation*} 
We further impose the condition, 
\begin{equation*}
    |\nabla f_{1}| \leq \frac{2}{l}.
\end{equation*}
Define $f_{2}$ in $C^{\infty}_{c}(\tilde{D}_{2})$, by 
\begin{equation*}
    f_{2} (x) = \begin{cases} 
        (f_{1} \circ \Psi^{-1}) (x), & x \in \tilde{W}_{2}, \\
        0, & x \in \tilde{D}_{2} \setminus \tilde{W}_{2}.
    \end{cases}
\end{equation*}
The support of $f_{2}$ will lie in $\Psi(B_{2l}^{\tilde{M}} (x_{0}^{1})) \subset \subset \Psi (\tilde{W}_{1}) = \tilde{W}_{2}$.
We then define the function $f$ in $C_{c}^{\infty} (\tilde{M})$, by $f = f_{1} + f_{2}$. 

\bigskip

Define the sets, 
\begin{eqnarray*}
    B_{2 l} &=& B_{2 l}^{\tilde{M}} (x_{0}^{1}) \cup \Psi (B_{2 l}^{\tilde{M}} (x_{0}^{1})), \\
    B_{l} &=& B_{l}^{\tilde{M}} (x_{0}^{1}) \cup \Psi (B_{l}^{\tilde{M}} (x_{0}^{1})), \\
    A_{l} &=& B_{2l} \setminus B_{l}.
\end{eqnarray*}
We will similarly define the sets, 
\begin{equation*}
    B_{t} = B_{t}^{\tilde{M}} (x_{0}^{1}) \cup \Psi (B_{t}^{\tilde{M}} (x_{0}^{1})),
\end{equation*}
for $t > 0$, such that $B_{t}^{\tilde{M}} (x_{0}^{1}) \subset \tilde{W}_{1}$.

\bigskip 

\begin{rem}\label{rem: choice of rho so that B l times -rho rho fits into B delta}
    We choose $\rho = \rho (z_{0}, M, N, g, \delta, W, \lambda) > 0$, such that 
    \begin{equation*}
        F_{1} \left( B_{2 l} \times (- 2 \rho, 2 \rho) \right) \subset \subset B_{\delta}^{N}(z_{0})
    \end{equation*}
\end{rem}

\bigskip 

We now look to define the function that will 'push out away from non-embedded point'. 
This function will define the path from (2) to (3) in Figure \ref{fig: The Path}.

\begin{rem}\label{rem: Choice of L and r 0 to fit in ball B delta}
    (Choice of $L$ and $r_{0}$)
    We choose $L = L(z_{0}, N, M, g, \delta) > 0$ and $r_{0} = r_{0} (z_{0}, N, M, g, \delta) > 0$, such that, 
    \begin{equation*}
        B_{L}^{\tilde{M}} (x_{0}^{1}) \subset \subset \tilde{W}_{1},
    \end{equation*}
    and, 
    \begin{equation*}
        F \left( B_{L} \times (- 2 r_{0}, 2 r_{0}) \right) \subset \subset B_{\delta}^{N} (z_{0}).
    \end{equation*}
\end{rem}

\bigskip 

For a sets $\tilde{\Omega}$ and $\Omega$, were $\Omega$ is open and $\tilde{\Omega} \subset \subset \Omega$, we define the $2$-Capacity of $\tilde{\Omega}$ in $\Omega$ as the value, 
\begin{equation*}
    \text{Cap}_{2} (\tilde{\Omega}, \Omega) = \inf \left\{ \int_{\Omega} |\nabla \varphi|^{2} \, d \mathcal{H}^{n} \colon \varphi \in C_{c}^{\infty} (\Omega), \, \varphi \geq \chi_{\tilde{\Omega}} \right\}.
\end{equation*}
For $n \geq 3$, by \cite[Theorem 4.15 (ix), Section 4.7.1 and Theorem 4.16, Section 4.7.2]{EG1991measure}, 
\begin{equation*}
    \lim_{k \rightarrow \infty} \text{Cap}_{2} (B_{\frac{L}{k}}^{\tilde{M}} (x_{0}^{1}), B_{L}^{\tilde{M}} (x_{0}^{1})) = \text{Cap}_{2} (\{x_{0}^{1}\}, B_{L}^{\tilde{M}} (x_{0}^{1})) = 0.
\end{equation*}
Identical proofs show that this also holds for $n = 2$. 
Therefore, for all $\gamma > 0$, there exists a function $\varphi_{\gamma, k}$, such that, 
\begin{equation*}
    \begin{cases}
        \varphi_{\gamma, k} \in C_{c}^{\infty} (B_{L}^{\tilde{M}} (x_{0}^{1})), \\
        \varphi_{\gamma, k} \colon \tilde{M} \rightarrow [0,1], \\
        \varphi_{\gamma, k} (x) = 1, \, x \in B_{\frac{L}{k}}^{\tilde{M}} (x_{0}^{1}),
    \end{cases}
\end{equation*}
and, defining $\tilde{\varphi}_{\gamma, k} = \varphi_{\gamma, k} + \varphi_{\gamma, k} \circ \Psi^{-1}$, we have
\begin{equation*}
    \int_{\tilde{M}} |\nabla \tilde{\varphi}_{\gamma, k}|^{2} \, d \mathcal{H}^{n} (x) < \gamma.
\end{equation*}
We consider the function $\tilde{f} = 1 - \tilde{\varphi}_{\gamma, k}$ in $C^{\infty} (\tilde{M})$, and $\|\nabla \tilde{f}\|_{L^{2} (\tilde{M})}^{2} < \gamma$. 

\begin{rem}\label{rem: later choices of r 0 and L and gamma and k}
    We will later make fixed choices for $L = L (z_{0}, M, N, g, \delta, W, \lambda)$, $r_{0} = r_{0} (z_{0}, M, N, g, \delta, W, \lambda, L)$, $\gamma = \gamma (z_{0}, N, M, g, \delta, r_{0}, L)$, and $k = k(z_{0}, N, M, g, \delta, L, \gamma)$.
\end{rem}

\begin{rem}\label{rem: choice of rho based on r 0 and L}
    We make a further choice of $\rho = \rho (z_{0}, N, M, g, \delta, L, r_{0}, k)$, such that, 
    \begin{equation*}
        B_{2l} \subset \subset B_{\frac{L}{k}},
    \end{equation*}
    We will make a further choice of $\rho$ later on, so that $\rho = \rho (z_{0}, N, M, g, \delta, L, k, r_{0})$.
\end{rem}

\subsection{Approximating Function for CMC}\label{subsec: Approximating function for CMC}

We use the tools we have constructed to give a simple proof that function, 
\begin{equation*}
    v_{\varepsilon} (y) = \overHe_{\varepsilon} (\tilde{d} (y)),
\end{equation*}
is suitable approximation of $M$, i.e. 
\begin{equation*}
    \lim_{\varepsilon \rightarrow 0} \mathcal{F}_{\varepsilon, \lambda} (v_{\varepsilon}) = 2 \sigma \mathcal{H}^{n} (M) - \sigma \lambda \mu_{g} (E) - \sigma \lambda \mu_{g} (N \setminus E). 
\end{equation*}
By the Co-Area formula on the function $\tilde{d}$, 
\begin{eqnarray*}
    \mathcal{F}_{\varepsilon, \lambda} (v_{\varepsilon}) &=& \int_{N} \frac{\varepsilon}{2} |\nabla v_{\varepsilon}|^{2} + \frac{W(v_{\varepsilon})}{2} - \sigma \lambda \int_{N} v_{\varepsilon}, \\
    &=& \int_{\mathbb{R}} \int_{\Gamma_{t}} Q_{\varepsilon} (t) \, d\mathcal{H}^{n} \, dt - \sigma \lambda \int_{\mathbb{R}} \int_{\Gamma_{t}} \overHe_{\varepsilon} (t) \, d \mathcal{H}^{n} \, dt,
\end{eqnarray*}
where, 
\begin{equation*}
    Q_{\varepsilon} (t) = \frac{\varepsilon}{2} ((\overHe_{\varepsilon})'(t))^{2} + \frac{W(\overHe_{\varepsilon} (t))}{\varepsilon}.
\end{equation*}
Using the fact that $N \setminus F(\tilde{T})$ is a set of 0 $\mu_{g}$-measure, and that $F \colon (\tilde{T}, h) \rightarrow (F(\tilde{T}), g)$, is a bijective local isometry, we have, 
\begin{equation*}
    \mathcal{F}_{\varepsilon, \lambda} (v_{\varepsilon}) = \int_{\mathbb{R}} Q_{\varepsilon} (t) \, \mathcal{H}^{n} (\tilde{\Gamma}_{t}) \, dt - \sigma \lambda \int_{\mathbb{R}} \overHe_{\varepsilon} (t) \, \mathcal{H}^{n} (\tilde{\Gamma}_{t}) \, dt. 
\end{equation*}
From Analysis of $\overHe_{\varepsilon}$, we have that, $\text{supp} \, Q_{\varepsilon} \subset [- 2 \varepsilon \Lambda, 2 \varepsilon \Lambda]$, and
\begin{equation*}
    2 \sigma - \beta \varepsilon^{2} \leq \int_{\mathbb{R}} Q_{\varepsilon} (t) \, dt \leq 2 \sigma + \beta \varepsilon^{2}.
\end{equation*}
Furthermore, 
\begin{equation*}
    \overHe_{\varepsilon} (t) \leq \begin{cases}
        1, & t > - 2 \varepsilon \Lambda, \\
        -1, & t \leq - 2 \varepsilon \Lambda,
    \end{cases}
\end{equation*}
and, 
\begin{equation*}
    \overHe_{\varepsilon} (t) \geq \begin{cases}
        1, & t > 2 \varepsilon \Lambda, \\
        -1, & t \leq 2 \varepsilon \Lambda.
    \end{cases}
\end{equation*}
Therefore, 
\begin{equation*}
    \mathcal{F}_{\varepsilon, \lambda} (v_{\varepsilon}) \leq (2 \sigma + \beta \varepsilon^{2}) \esssup_{t \in [-2 \varepsilon \Lambda, 2 \varepsilon \Lambda]} \mathcal{H}^{n} (\tilde{\Gamma}_{t}) - \sigma \lambda \int_{2 \varepsilon \Lambda}^{+ \infty} \mathcal{H}^{n} (\tilde{\Gamma}_{t}) \, dt + \sigma \lambda \int_{- \infty}^{2 \varepsilon \Lambda} \mathcal{H}^{n} (\tilde{\Gamma}_{t}) \, dt.
\end{equation*}
Similarly, 
\begin{equation*}
    \mathcal{F}_{\varepsilon, \lambda} (v_{\varepsilon}) \geq (2 \sigma - \beta \varepsilon^{2}) \essinf_{t \in [-2 \varepsilon \Lambda, 2 \varepsilon \Lambda]} \mathcal{H}^{n} (\tilde{\Gamma}_{t}) - \sigma \lambda \int_{- 2 \varepsilon \Lambda}^{+ \infty} \mathcal{H}^{n} (\tilde{\Gamma}_{t}) \, dt + \sigma \lambda \int_{- \infty}^{2 \varepsilon \Lambda} \mathcal{H}^{n} (\tilde{\Gamma}_{t}) \, dt.
\end{equation*}
We have, 
\begin{equation*}
    \mathcal{H}^{n} (\tilde{\Gamma}_{t}) = \int_{\tilde{M}} \theta_{t} (x) \, d \mathcal{H}^{n} (x),
\end{equation*}
and by applying Dominated Convergence Theorem to $\theta_{t}$, we have that, 
\begin{equation*}
    \lim_{t \rightarrow 0} \mathcal{H}^{n} (\tilde{\Gamma}_{t}) = \lim_{t \rightarrow 0} \int_{\tilde{M}} \theta_{t} (x) \, d \mathcal{H}^{n} (x) = \mathcal{H}^{n} (\tilde{M} \cap \tilde{T}) = \mathcal{H}^{n} (\tilde{M}).
\end{equation*}
This implies that, 
\begin{equation*}
    \lim_{\varepsilon \rightarrow 0} \esssup_{t \in [-2 \varepsilon \Lambda, 2 \varepsilon \Lambda]} \mathcal{H}^{n} (\tilde{\Gamma}_{t}) = \mathcal{H}^{n}(\tilde{M}) = \mathcal{H}^{n} (M),
\end{equation*}
and, 
\begin{equation*}
    \lim_{\varepsilon \rightarrow 0} \essinf_{t \in [-2 \varepsilon \Lambda, 2 \varepsilon \Lambda]} \mathcal{H}^{n} (\tilde{\Gamma}_{t}) = \mathcal{H}^{n}(\tilde{M}) = \mathcal{H}^{n} (M).
\end{equation*}

\bigskip 

The function $t \mapsto \mathcal{H}^{n} (\tilde{\Gamma}_{t})$ is measurable, implying that, 
\begin{equation*}
    \lim_{\varepsilon \rightarrow 0} \int_{\pm 2 \varepsilon \Lambda}^{+ \infty} \mathcal{H}^{n} (\tilde{\Gamma}_{t}) \, dt = \int_{0}^{+ \infty} \mathcal{H}^{n} (\tilde{\Gamma}_{t}) \, dt = \mathcal{H}^{n+1} (\{y \in N \colon \tilde{d} (y) > 0\}) = \mu_{g} (E),
\end{equation*}
and, 
\begin{equation*}
    \lim_{\varepsilon \rightarrow 0} \int^{\pm 2 \varepsilon \Lambda}_{- \infty} \mathcal{H}^{n} (\tilde{\Gamma}_{t}) \, dt = \int^{0}_{- \infty} \mathcal{H}^{n} (\tilde{\Gamma}_{t}) \, dt = \mathcal{H}^{n + 1} (\{y \in N \colon \tilde{d} (y) < 0\}) = \mu_{g} (N \setminus E).
\end{equation*}

\bigskip 

Therefore we have, 
\begin{equation*}
    \lim_{\varepsilon \rightarrow 0} \mathcal{F}_{\varepsilon, \lambda} (v_{\varepsilon}) = 2 \sigma \mathcal{H}^{n} (M) - \sigma \lambda \mu_{g} (E) + \sigma \lambda \mu_{g} (N \setminus E).
\end{equation*}

\section{Base Computation}\label{sec: Baseline Computation}
 
Consider a smooth function, 
\begin{equation*}
    g \colon \mathbb{R} \times \tilde{M}  \rightarrow \mathbb{R}.
\end{equation*}
and define the following
\begin{eqnarray*}
    v_{\varepsilon}^{r,g} \colon \tilde{M} \times \mathbb{R} &\rightarrow& \mathbb{R}, \\
    (x,t) &\mapsto& \overHe_{\varepsilon}(t - g(r,x)).
\end{eqnarray*}
By Gauss Lemma, 
\begin{equation*}
    |\nabla v_{\varepsilon}^{r,g}(x,t)|^{2} = ( (\overHe_{\varepsilon})'(t - g(r,x)))^{2} (1 + |\nabla_{x} g(r,x)|_{(x,t)}^{2}).
\end{equation*}
By the co-area formula on $p$, 
\begin{eqnarray*}
    \mathcal{F}_{\varepsilon, \lambda} (v_{\varepsilon}^{r,g}) &=& \int_{\tilde{T}} \frac{\varepsilon}{2} |\nabla v_{\varepsilon}^{r,g}|^{2} + \frac{W(v_{\varepsilon}^{r,g})}{\varepsilon} - \sigma \lambda v_{\varepsilon}^{r,g} \, d \mu_{h}, \\
    &=& \int_{\mathbb{R}} \int_{\tilde{\Gamma}_{t}} \frac{\varepsilon}{2} ( (\overHe_{\varepsilon})'(t - g(r,x)))^{2}|\nabla_{x} g(r,x)|_{(x,t)}^{2} \, d \mathcal{H}^{n}(x,t) \, dt \\
    && + \int_{\mathbb{R}} \int_{\tilde{\Gamma}_{t}} \frac{\varepsilon}{2} ( (\overHe_{\varepsilon})'(t - g(r,x)))^{2} + \frac{W(\overHe_{\varepsilon}(t - g(r,x)))}{\varepsilon} - \sigma \lambda \overHe_{\varepsilon} (t - g(r,x)) \, d \mathcal{H}^{n} (x,t) \, dt, \\
    &=& \int_{\tilde{M}} \int_{\sigma^{-}(x)}^{\sigma^{+}(x)} \frac{\varepsilon}{2} ( (\overHe_{\varepsilon})'(t - g(r,x)))^{2}|\nabla_{x} g(r,x)|_{(x,t)}^{2} \, \theta_{t}(x) \, dt \, d \mathcal{H}^{n}(x) \\
    && \hspace{1cm} + \int_{\tilde{M}} \int_{\sigma^{-}(x)}^{\sigma^{+}(x)} \bigg(\frac{\varepsilon}{2} ( (\overHe_{\varepsilon})'(t - g(r,x)))^{2} + \frac{W(\overHe_{\varepsilon}(t - g(r,x)))}{\varepsilon} \\
    && \hspace{3cm} - \sigma \lambda \overHe_{\varepsilon} (t - g(r,x)) \bigg) \, \theta_{t} (x) \, dt \, d \mathcal{H}^{n} (x), 
\end{eqnarray*}
In the last equality we use Fubini's Theorem to switch the integrals. 

\bigskip 

We have, 
\begin{eqnarray*}
    \mathcal{F}_{\varepsilon, \lambda} (v_{\varepsilon}^{r,g}) - \mathcal{F}_{\varepsilon, \lambda} (v_{\varepsilon}^{0,g}) &=& \int_{\tilde{M}} \int_{\sigma^{-}(x)}^{\sigma^{+}(x)} \frac{\varepsilon}{2} ( (\overHe_{\varepsilon})'(t - g(r,x)))^{2}|\nabla_{x} g(r,x)|_{(x,t)}^{2} \, \theta_{t}(x) \, dt \, d \mathcal{H}^{n}(x) \\
    && \hspace{1cm} - \int_{\tilde{M}} \int_{\sigma^{-}(x)}^{\sigma^{+}(x)} \frac{\varepsilon}{2} ( (\overHe_{\varepsilon})'(t - g(r,x)))^{2}|\nabla_{x} g(0,x)|_{(x,t)}^{2} \, \theta_{t}(x) \, dt \, d \mathcal{H}^{n}(x), \\
    && + \int_{\tilde{M}} \int_{\sigma^{-}(x)}^{\sigma^{+}(x)} (Q_{\varepsilon} (t - g(r,x)) - Q_{\varepsilon} (t - g(0,x))) \theta_{t}(x) \, dt \, d \mathcal{H}^{n}(x) \\
    && - \int_{\tilde{M}} \int_{\sigma^{-}(x)}^{\sigma^{+}(x)} \sigma \lambda (\overHe_{\varepsilon} (t - g(r,x)) - \overHe_{\varepsilon} (t - g(0,x))) \, \theta_{t} (x) \, dt \, d \mathcal{H}^{n} (x), 
\end{eqnarray*}
We have the following two terms,
\begin{eqnarray*}
    \rom{1}_{\varepsilon}^{r,g} &=& \int_{\tilde{M}} \int_{\sigma^{-}(x)}^{\sigma^{+}(x)} \frac{\varepsilon}{2} ( (\overHe_{\varepsilon})'(t - g(r,x)))^{2}|\nabla_{x} g(r,x)|_{(x,t)}^{2} \, \theta_{t}(x) \, dt \, d \mathcal{H}^{n}(x) \\
    && \hspace{1cm} - \int_{\tilde{M}} \int_{\sigma^{-}(x)}^{\sigma^{+}(x)} \frac{\varepsilon}{2} ( (\overHe_{\varepsilon})'(t - g(r,x)))^{2}|\nabla_{x} g(0,x)|_{(x,t)}^{2} \, \theta_{t}(x) \, dt \, d \mathcal{H}^{n}(x), 
\end{eqnarray*}
and, by Fundamental Theorem of Calculus and Fubini's Theorem,
\begin{eqnarray*}
    \rom{2}_{\varepsilon}^{r,g} &=& \int_{\tilde{M}} \int_{\sigma^{-}(x)}^{\sigma^{+}(x)} (Q_{\varepsilon} (t - g(r,x)) - Q_{\varepsilon} (t - g(0,x))) \theta_{t}(x) \, dt \, d \mathcal{H}^{n}(x) \\
    && - \int_{\tilde{M}} \int_{\sigma^{-}(x)}^{\sigma^{+}(x)} \sigma \lambda (\overHe_{\varepsilon} (t - g(r,x)) - \overHe_{\varepsilon} (t - g(0,x))) \, \theta_{t} (x) \, dt \, d \mathcal{H}^{n} (x), \\
    &=& - \int_{0}^{r} \int_{\tilde{M}} \partial_{s} g (s,x) \int_{\sigma^{-}(x)}^{\sigma^{+}(x)} Q_{\varepsilon}'(t - g(s,x)) \, \theta_{t}(x) \, dt \, d\mathcal{H}^{n}(x) \, ds \\
    && + \int_{0}^{r} \int_{\tilde{M}} \partial_{s} g (s,x) \int_{\sigma^{-}(x)}^{\sigma^{+}(x)} \sigma \lambda (\overHe_{\varepsilon})'(t - g(s,x)) \, \theta_{t}(x) \, dt \, d\mathcal{H}^{n}(x) \, ds, \\
    &=& - \int_{0}^{r} \int_{\tilde{M}} \partial_{s} g (s,x) Q_{\varepsilon}(\sigma^{+}(x) - g(s,x)) \, \theta^{+}(x) \, d\mathcal{H}^{n}(x) \, ds \\
    && + \int_{0}^{r} \int_{\tilde{M}} \partial_{s} g (s,x) Q_{\varepsilon}(\sigma^{-}(x) - g(s,x)) \, \theta^{-}(x) \, d\mathcal{H}^{n}(x) \, ds \\
    && + \int_{0}^{r} \int_{\tilde{M}} \partial_{s} g (s,x) \int_{\sigma^{-}(x)}^{\sigma^{+}(x)} Q_{\varepsilon}(t - g(s,x)) \, \partial_{t}\theta_{t}(x) \, dt \, d\mathcal{H}^{n}(x) \, ds \\
    &&  + \int_{0}^{r} \int_{\tilde{M}} \partial_{s} g (s,x) \int_{\sigma^{-}(x)}^{\sigma^{+}(x)} \sigma \lambda (\overHe_{\varepsilon})'(t - g(s,x)) \, \theta_{t}(x) \, dt \, d\mathcal{H}^{n}(x) \, ds, \\
    &=& - \int_{0}^{r} \int_{\tilde{M}} \partial_{s} g (s,x) Q_{\varepsilon}(\sigma^{+}(x) - g(s,x)) \, \theta^{+}(x) \, d\mathcal{H}^{n}(x) \, ds \\
    && + \int_{0}^{r} \int_{\tilde{M}} \partial_{s} g (s,x) Q_{\varepsilon}(\sigma^{-}(x) - g(s,x)) \, \theta^{-}(x) \, d\mathcal{H}^{n}(x) \, ds \\
    && + \int_{0}^{r} \int_{\tilde{M}} \partial_{s} g (s,x) \int_{\sigma^{-}(x)}^{\sigma^{+}(x)} Q_{\varepsilon}(t - g(s,x)) \, (\lambda - H_{t}(x)) \, \theta_{t}(x) \, dt \, d\mathcal{H}^{n}(x) \, ds \\
    &&  + \lambda \int_{0}^{r} \int_{\tilde{M}} \Theta_{\varepsilon, g}^{1} (s,x) - \Theta_{\varepsilon, g}^{2} \, d\mathcal{H}^{n}(x) \, ds, \\
\end{eqnarray*}
Where, 
\begin{eqnarray*}
    \theta^{+} (x) &=& \lim_{t \nearrow \sigma^{+}(x)} \theta_{t}(x), \\
    \theta^{-} (x) &=& \lim_{t \searrow \sigma^{-}(x)} \theta_{t}(x), \\
    \Theta_{\varepsilon, g}^{1} (s,x) &=& \sigma \int_{\sigma^{-}(x)}^{\sigma^{+}(x)} \partial_{s} g(s,x) (\overHe_{\varepsilon})' (t - g(s,x)) \theta_{t}(x) \, dt, \\
    \Theta_{\varepsilon, g}^{2} (s,x) &=& \int_{\sigma^{-}(x)}^{\sigma^{+}(x)} \partial_{s} g(s,x) Q_{\varepsilon} (t - g(s,x)) \theta_{t} (x) \, dt.
\end{eqnarray*}
For the last equality of $\rom{2}_{\varepsilon}^{r,g}$ we are using $\partial_{t} \theta_{t} (x) = - H_{t} (x) \theta_{t} (x)$, for $t$ in $(\sigma^{-}(x), \sigma^{+} (x) )$.

\section{Competitor}\label{sec: Path From Disks to Crab}

\subsection{Calculation on $\tilde{M} \times \mathbb{R}$}\label{subsec: Path From Disks to Crab, calculation}

Here we construct the path in Figure \ref{fig: The Path} from (1) to (2).

\bigskip

Set $g_{1}(r,x) = r f(x)$, take $r$ in $[0, \rho]$, where $\rho \in (0,1)$ will be chosen later and $f \colon \tilde{M} \rightarrow \mathbb{R}$ as defined in Section \ref{subsec: construction about non-embed point}.

\begin{rem}\label{rem: choice in epsilon 1}
    (Choice in $\varepsilon_{1}$)
    We choose $\varepsilon_{1} = \varepsilon_{1} (\rho) \in (0, 1/4)$, such that, 
    \begin{equation*}
        2 \varepsilon_{1} \Lambda = 6 \varepsilon_{1} |\log \, \varepsilon_{1}| < < \rho.
    \end{equation*} 
    From here we consider $\varepsilon$ in $(0, \varepsilon_{1})$.
\end{rem}

We have, 
\begin{equation}\label{eqn: rom 2 for g 1}
    \begin{split}
    \rom{2}_{\varepsilon}^{r,g_{1}} = & - \int_{0}^{r} \int_{\tilde{M}} f(x) Q_{\varepsilon}(\sigma^{+}(x) - sf(x)) \, \theta^{+}(x) \, d\mathcal{H}^{n}(x) \, ds \\
    & + \int_{0}^{r} \int_{\tilde{M}} f(x) Q_{\varepsilon}(\sigma^{-}(x) - sf(x)) \, \theta^{-}(x) \, d\mathcal{H}^{n}(x) \, ds \\
    & + \int_{0}^{r} \int_{\tilde{M}} f(x) \int_{\sigma^{-}(x)}^{\sigma^{+}(x)} Q_{\varepsilon}(t - sf(x)) \, (\lambda - H_{t}(x)) \, \theta_{t}(x) \, dt \, d\mathcal{H}^{n}(x) \, ds \\
    & + \lambda \int_{0}^{r} \int_{\tilde{M}} \Theta_{\varepsilon, g}^{1} (s,x) - \Theta_{\varepsilon, g}^{2} \, d\mathcal{H}^{n}(x) \, ds, 
    \end{split}
\end{equation}

\bigskip 

Concentrate on the second term of the right-hand side of (\ref{eqn: rom 2 for g 1}).
As the integrand is non-positive, $f = -1$ on $B_{l}$ and $\text{supp} \, Q_{\varepsilon} \subset [-2 \varepsilon \Lambda, 2 \varepsilon \Lambda]$, we have
\begin{eqnarray*}
    && \int_{0}^{r} \int_{\tilde{M}} f(x) Q_{\varepsilon}(\sigma^{-}(x) - sf(x)) \, \theta^{-}(x) \, d\mathcal{H}^{n}(x) \, ds \\
    && \hspace{2cm} \leq - (2 \sigma - \beta \varepsilon^{2}) \int_{B_{l} \cap \{- r + 2 \varepsilon \Lambda \leq \sigma^{-}(x) \leq - 2 \varepsilon \Lambda\} } \theta^{-} (x) \, d \mathcal{H}^{n}(x)
\end{eqnarray*}
We look for lower bounds on $\theta^{-}$.

\begin{rem}\label{rem: choice of delta for upper and lower mean curvature bounds in ball about z 0}
    Choose $\delta = \delta (z_{0}, N, M, g) > 0$, such that, 
\begin{equation*}
    \min_{y \in B_{\delta}^{N} (z_{0})} \{ \Delta \tilde{d}_{1} (y), \Delta \tilde{d}_{2} (y) \} \geq \frac{\lambda}{2},
\end{equation*}
and, 
\begin{equation*}
    \max_{y \in B_{\delta}^{N} (z_{0})} \{ \Delta \tilde{d}_{1} (y), \Delta \tilde{d}_{2} (y) \} \leq \frac{3\lambda}{2}.
\end{equation*}
\end{rem}

\bigskip 

Therefore, for $(x,t)$ in $\tilde{T}$, such that, $F(x,t)$ lies in $B_{\delta}^{N} (z_{0})$, we have that, 
\begin{equation*}
    \frac{\lambda}{2} \leq H_{t} (x) \leq \frac{3 \lambda}{2}.
\end{equation*}
Thus, by similar calculations carried out in Section \ref{subsec: Area Element}, for all $(x,t)$ in $\tilde{T}$, such that $F(x,t)$ lies in $B_{\delta}^{N} (z_{0})$, we have,
\begin{equation*}
    \theta_{t} (x) \geq \begin{cases} 
        e^{- \frac{3\lambda t}{2}}, & t \geq 0, \\
        e^{- \frac{\lambda t}{2}}, & t \leq 0.
    \end{cases}
\end{equation*}

For $x$ in $B_{l}$, we have $\sigma^{-} (x) > - \rho$, and by choice of $\rho$ in Remark \ref{rem: choice of rho so that B l times -rho rho fits into B delta}, we have that $F(\{x\} \times (\sigma^{-}(x) ,0)) \subset B_{\delta}^{N} (z_{0})$.
Thus,
\begin{equation*}
    \theta^{-} (x) = \lim_{t \searrow \sigma^{-}(x)} \theta_{t} (x) \geq e^{- \frac{\lambda \, \sigma^{-} (x)}{2}} \geq 1,
\end{equation*}
for all $x$ in $B_{l}$.
Therefore, 
\begin{eqnarray*}
    && \int_{0}^{r} \int_{\tilde{M}} f(x) Q_{\varepsilon}(\sigma^{-}(x) - sf(x)) \, \theta^{-}(x) \, d\mathcal{H}^{n}(x) \, ds \\
    && \hspace{2cm} \leq - 2 \sigma \mathcal{H}^{n} (\{ x \colon x \in B_{l}, \, - r + 2 \varepsilon \Lambda \leq \sigma^{-}(x) \leq - 2 \varepsilon \Lambda \}) + C_{2} \varepsilon^{2},
\end{eqnarray*}
for $C_{2} = C_{2} (N, M, g, \lambda, W) < + \infty$.
This is a lower bound for the area deleted in pushing the disks together. 

\bigskip 

Concentrate on First term on the right-hand side of (\ref{eqn: rom 2 for g 1}).
By choice of $\delta > 0$ in Remark \ref{rem: diffeos F i} and $\rho > 0$ in Remark \ref{rem: choice of rho so that B l times -rho rho fits into B delta} we have that for $x$ in $ \text{supp} \, (f) \subset B_{2l}$, $\sigma^{+} (x) > 2 \rho > > 2 \varepsilon \Lambda$. 
Thus,
\begin{equation*}
    \int_{0}^{r} \int_{\tilde{M}} f(x) Q_{\varepsilon}(\sigma^{+}(x) - sf(x)) \, \theta^{+}(x) \, d\mathcal{H}^{n}(x) \, ds = 0.
\end{equation*}

\bigskip 

Concentrate on the third term on the right-hand side of (\ref{eqn: rom 2 for g 1}).
Consider $s > 0$, and $x$ in $\tilde{M}$, such that $s f (x) < -2 \varepsilon \Lambda$. 
Using the fact that $\text{supp} \, Q_{\varepsilon} \subset [-2 \varepsilon \Lambda, 2 \varepsilon \Lambda]$, and the inequalities on $H_{t}$ in Remark \ref{rem: Bounds on mean curvature of level sets},
\begin{eqnarray*}
    \int_{\sigma^{-}(x)}^{\sigma^{+}(x)} Q_{\varepsilon}(t - sf(x)) \, (\lambda - H_{t}(x)) \, \theta_{t}(x) \, dt &=& \int_{- 2 \varepsilon \Lambda}^{2 \varepsilon \Lambda} Q_{\varepsilon} (\xi) (\lambda - H_{\xi + s f (x)}) \theta_{\xi + s f(x)} d \xi, \\
    &\geq& 0.
\end{eqnarray*}
For $s f (x) \geq -2 \varepsilon \Lambda$, we have, 
\begin{eqnarray*}
    \int_{\sigma^{-}(x)}^{\sigma^{+}(x)} Q_{\varepsilon}(t - sf(x)) \, (\lambda - H_{t}(x)) \, \theta_{t}(x) \, dt &=& \int_{- 2 \varepsilon \Lambda}^{2 \varepsilon \Lambda} Q_{\varepsilon} (\xi) (\lambda - H_{\xi + s f (x)}) \theta_{\xi + s f(x)} d \xi, \\
    &\geq& C_{2} \min_{t \in [-4 \varepsilon \Lambda, 2 \varepsilon \Lambda]} (\lambda - H_{t}(x)) \theta_{t}(x), 
\end{eqnarray*}
potentially rechoosing $C_{2} = C_{2} (M, N, g, \lambda, W)$.
Therefore, we have that for all $r$ in $[0, \rho]$, 
\begin{eqnarray*}
    \rom{2}_{\varepsilon}^{r,g_{1}} &\leq& - 2 \sigma \mathcal{H}^{n} (\{ x \colon x \in B_{l}, \, - r + 2 \varepsilon \Lambda \leq \sigma^{-}(x) \leq - 2 \varepsilon \Lambda \}) \\
    && + C_{2} \left( r \int_{B_{2 l}} q_{\varepsilon}^{1} (x) \, d \mathcal{H}^{n}(x) + \int_{0}^{r} \int_{\tilde{M}} \Theta_{\varepsilon, g}^{1} (s,x) - \Theta_{\varepsilon, g}^{2} \, d\mathcal{H}^{n}(x) \, ds + \varepsilon^{2} \right).
\end{eqnarray*}
where, 
\begin{equation*}
    q^{1}_{\varepsilon} (x) = \max_{t \in [-4 \varepsilon \Lambda, 2 \varepsilon \Lambda]} (H_{t} (x) - \lambda) \theta_{t} (x) \geq 0,
\end{equation*}
and we have potentially rechosen $C_{2} = C_{2} (M, N, g, \lambda, W)$.
Therefore, for $r < 4 \varepsilon \Lambda$, 
\begin{equation*}
    \rom{2}_{\varepsilon}^{r,g_{1}} = C_{2} \bigg( r \int_{B_{2 l}} q_{\varepsilon}^{1} (x) \, d \mathcal{H}^{n}(x) + \int_{0}^{r} \int_{\tilde{M}} \Theta_{\varepsilon, g}^{1} (s,x) - \Theta_{\varepsilon, g}^{2} \, d\mathcal{H}^{n}(x) \, ds + \varepsilon^{2} \bigg),
\end{equation*}
and for $r \geq 4 \varepsilon \Lambda$, 
\begin{eqnarray*}
    \rom{2}_{\varepsilon}^{r,g_{1}} &\leq&  - 2 \sigma \mathcal{H}^{n} (\{ x \colon x \in B_{l}, \, -r + 2 \varepsilon \Lambda \leq \sigma^{-}(x) \leq 0 \}) \\
    && + C_{2} \bigg( \mathcal{H}^{n} ( \{ x \colon x \in B_{l}, -2 \varepsilon \Lambda < \sigma^{-}(x) \leq 0 \}) \\
    && + \int_{B_{2 l}} q_{\varepsilon}^{1} (x) \, d \mathcal{H}^{n}(x) + \int_{0}^{r} \int_{\tilde{M}} \Theta_{\varepsilon, g}^{1} (s,x) - \Theta_{\varepsilon, g}^{2} \, d\mathcal{H}^{n}(x) \, ds + \varepsilon^{2} \bigg).
\end{eqnarray*}
Again we have potentially rechoosing $C_{2} = C_{2} (M, N, g, \lambda, W)$.

\bigskip 

We now turn our attention to the term, 
\begin{eqnarray*}
    \rom{1}_{\varepsilon}^{r,g_{1}} &=& \int_{\tilde{M}} \int_{\sigma^{-}(x)}^{\sigma^{+}(x)} \frac{\varepsilon}{2} ( (\overHe_{\varepsilon})'(t - r f (x)))^{2}|r \nabla f(x)|_{(x,t)}^{2} \, \theta_{t}(x) \, dt \, d \mathcal{H}^{n}(x), \\
    &=& \int_{\tilde{M}} \int_{- 2 \varepsilon \Lambda}^{2 \varepsilon \Lambda} \frac{\varepsilon}{2} ( (\overHe_{\varepsilon})'(\xi) )^{2}|r \nabla f(x)|_{(x,\xi + r f (x))}^{2} \, \theta_{\xi + r f (x)}(x) \, d \xi \, d \mathcal{H}^{n}(x),
\end{eqnarray*}
with, 
\begin{equation*}
    |\nabla f (x)|_{(x,t)}^{2} = g_{\exp_{\iota(x)}(t \nu (x))} (d \exp_{\iota(x)} (t \nu(x))(\iota_{*} (\nabla f (x)) ), d \exp_{\iota(x)} (t \nu(x))(\iota_{*} (\nabla f (x))))
\end{equation*}

\bigskip 

\begin{rem}\label{rem: Choice of delta for normal neighbourhood, and C 1}
    We may choose $\delta = \delta (z_{0}, N, g) > 0$, such that $B_{2 \delta}^{N} (z_{0})$ is a totally normal neighbourhood, and the following holds
    \begin{equation*}
        C_{3}  = C_{3}  (z_{0}, N, g, \delta) = \sup \{ |d \exp_{y} (X)|^{2} \colon y \in B_{\delta}^{N} (z_{0}), \, X \in B_{2 \delta}^{T_{y}N} (0)\} \leq 100 n^{2}.
    \end{equation*}
\end{rem} 

By choices of $\rho$ in Remark \ref{rem: choice of rho so that B l times -rho rho fits into B delta}, and $\varepsilon$ in Remark \ref{rem: choice in epsilon 1}, for all $x$ in $A_{l}$, $r$ in $[0, \rho]$, and $\xi$ in $[-2 \varepsilon \Lambda, 2 \varepsilon \Lambda]$, 
\begin{equation*}
    |r \nabla f(x)|_{(x,\xi + r f (x))}^{2} \leq C_{3}  |r^{2} \nabla f (x)|^{2}_{(x,0)} \leq 2 C_{3}  \frac{r^{2}}{l^{2}}.
\end{equation*}
Note that for $x$ in $\tilde{M} \setminus A_{l}$, $\nabla f (x) = 0$, therefore, $|\nabla f (x)|_{(x,t)} = 0$, for all $t$.
We have, 
\begin{equation*}
    \rom{1}_{\varepsilon}^{r,g_{1}} \leq C_{3} \mathcal{H}^{n}(A_{l}) \frac{r^{2}}{l^{2}},
\end{equation*}
where we have potentially rechosen $C_{3} = C_{3} (z_{0}, N, M, g, \delta, \lambda, W)$.

\bigskip 

For $r$ in $[0, 4 \varepsilon \Lambda)$, we have, 
\begin{eqnarray*}
    \rom{1}_{\varepsilon}^{r,g} + \rom{2}_{\varepsilon}^{r,g} &\leq& C_{3} \frac{(\varepsilon \Lambda)^{2}}{l^{2}} + C_{2} \bigg( \varepsilon \Lambda \int_{B_{2 l}} q_{\varepsilon}^{1} (x) \, d \mathcal{H}^{n}(x) \\
    && \hspace{1cm} + \int_{0}^{r} \int_{\tilde{M}} \Theta_{\varepsilon, g}^{1} (s,x) - \Theta_{\varepsilon, g}^{2} \, d\mathcal{H}^{n}(x) \, ds + \varepsilon^{2} \bigg). 
\end{eqnarray*}
Again, we are potentially rechoosing $C_{3} = C_{3} (z_{0}, N, M, g, \delta, \lambda, W) < + \infty$.

\bigskip 

For $r$ in $[4 \varepsilon \Lambda, \rho]$ we define the following non-decreasing function, 
\begin{equation*}
    P_{\varepsilon} (r) \coloneqq \frac{\mathcal{H}^{n} (\{ x \colon x \in B_{l}, \, -r + 2 \varepsilon \Lambda \leq \sigma^{-} (x) \leq 0\})}{\mathcal{H}^{n} (A_{l})},
\end{equation*}
and we have,
\begin{eqnarray*}
    \rom{1}_{\varepsilon}^{r,g_{1}} + \rom{2}_{\varepsilon}^{r,g_{1}} &\leq& \mathcal{H}^{n} (A_{l}) \left( C_{3} \frac{r^{2}}{l^{2}} - 2 \sigma P_{\varepsilon}(r) \right) \\
    && + C_{2} \bigg( \mathcal{H}^{n} ( \{ x \colon x \in B_{l}, -2 \varepsilon \Lambda < \sigma^{-}(x) \leq 0 \}) + \int_{B_{2 l}} q_{\varepsilon}^{1} (x) \, d \mathcal{H}^{n}(x)\\
    && + \int_{0}^{r} \int_{\tilde{M}} \Theta_{\varepsilon, g}^{1} (s,x) - \Theta_{\varepsilon, g}^{2} \, d\mathcal{H}^{n}(x) \, ds + \varepsilon^{2} \bigg).
\end{eqnarray*}

\bigskip 

We now define the following function on $[0,1]$, 
\begin{equation*}
    \kappa_{\varepsilon} (s) = \begin{cases}
        0, & s \in [0, (4 \varepsilon \Lambda) / \rho), \\
        C_{3} \frac{\rho^{2}}{l^{2}} s^{2} - 2 \sigma P_{\varepsilon} (s \rho), & s \in [(4 \varepsilon \Lambda) / \rho, 1].
    \end{cases}
\end{equation*}
Note that, 
\begin{equation*}
    P_{\varepsilon} (\rho) \xrightarrow{\varepsilon \rightarrow 0} \frac{\mathcal{H}^{n} (B_{l})}{\mathcal{H}^{n} (A_{l})} \xrightarrow{\rho \rightarrow 0} \frac{1}{2^{n} - 1},
\end{equation*}
and furthermore, recalling the bound $\rho \leq C_{1}  l^{2}$, $C_{1} = C_{1} (z_{0}, N, M, g, \delta) < + \infty$, we have, 
\begin{equation*}
    0 < \frac{\rho^{2}}{l^{2}} \leq C_{1}  \rho \xrightarrow{\rho \rightarrow 0} 0.
\end{equation*}

\begin{rem}\label{rem: choice of rho for path from disks to crab}
    Choose $\rho = \rho (z_{0}, N, M, g, \delta, \lambda, W) > 0$, such that 
    \begin{equation*}
        C_{3} \frac{\rho^{2}}{l^{2}} < \frac{\sigma}{2(2^{n} - 1)},
    \end{equation*}
    and, 
    \begin{equation*}
        \frac{\mathcal{H}^{n} (B_{l})}{\mathcal{H}^{n}(A_{l})} > \frac{7}{8(2^{n} - 1)}.
    \end{equation*}
\end{rem}

\bigskip 

\begin{rem}\label{rem: Choice of epsilon 2 for function P epsilon}
    There exists an $\varepsilon_{2} = \varepsilon_{2} (z_{0}, M, N, g, \delta, W, \lambda, \rho) > 0$, such that, $\varepsilon_{2} \leq \varepsilon_{1}$, and for all $\varepsilon$ in $(0, \varepsilon_{2})$,
    \begin{equation*}
        P_{\varepsilon} (\rho) > \frac{3}{4(2^{n} - 1)}.
    \end{equation*}
    From here we always choose $\varepsilon$ in $(0, \varepsilon_{2})$.
\end{rem}

\bigskip 

We have that, 
\begin{equation*}
    \max_{s \in [0,1]} \kappa_{\varepsilon} (s) \leq C_{3} \frac{\rho^{2}}{l^{2}} < \frac{\sigma}{2(2^{n} - 1)},
\end{equation*}
and, 
\begin{equation*}
    \kappa_{\varepsilon} (1) < - \frac{\sigma}{2^{n} - 1}.
\end{equation*}

\bigskip 

We have, for $r$ in $[0, 4 \varepsilon \Lambda)$,
\begin{equation*}
    \mathcal{F}_{\varepsilon, \lambda} (v_{\varepsilon}^{r, g_{1}}) \leq \mathcal{F}_{\varepsilon, \lambda} (v_{\varepsilon}) + \rom{3}_{\varepsilon}^{1, r},
\end{equation*}
where, 
\begin{eqnarray*}
    \rom{3}_{\varepsilon}^{1, r} = C_{4} \bigg( \varepsilon \Lambda \int_{B_{2 l}} q_{\varepsilon}^{1} (x) \, d \mathcal{H}^{n}(x) + \int_{0}^{r} \int_{\tilde{M}} \Theta_{\varepsilon, g}^{1} (s,x) - \Theta_{\varepsilon, g}^{2} \, d\mathcal{H}^{n}(x) \, ds + (\varepsilon \Lambda)^{2} \bigg),
\end{eqnarray*}
and $C_{4} = C_{4} (z_{0}, M, N, g, \delta, W, \lambda, \rho) < +\infty$.

\bigskip 

For $r$ in $[4 \varepsilon \Lambda, \rho]$, 
\begin{equation*}
    \mathcal{F}_{\varepsilon, \lambda} (v_{\varepsilon}^{r, g_{1}}) \leq \mathcal{F}_{\varepsilon, \lambda} (v_{\varepsilon}) + \mathcal{H}^{n} (A_{l}) \kappa_{\varepsilon} \left( \frac{r}{\rho} \right) + \rom{3}_{\varepsilon}^{2, r},
\end{equation*}
where,
\begin{eqnarray*}
    \rom{3}_{\varepsilon}^{2,r} &=& C_{2} \bigg( \mathcal{H}^{n} ( \{ x \colon x \in B_{l}, -2 \varepsilon \Lambda < \sigma^{-}(x) \leq 0 \}) + \int_{B_{2 l}} q_{\varepsilon}^{1} (x) \, d \mathcal{H}^{n}(x)\\
    && + \int_{0}^{r} \int_{\tilde{M}} \Theta_{\varepsilon, g}^{1} (s,x) - \Theta_{\varepsilon, g}^{2} \, d\mathcal{H}^{n}(x) \, ds + \varepsilon^{2} \bigg).
\end{eqnarray*}
Furthermore, 
\begin{equation*}
    \mathcal{F}_{\varepsilon, \lambda} (v_{\varepsilon}^{\rho, g_{1}}) \leq \mathcal{F}_{\varepsilon, \lambda} (v_{\varepsilon}) - \frac{\sigma \mathcal{H}^{n} (A_{l})}{2^{n} - 1} + \rom{3}_{\varepsilon}^{2, \rho}.
\end{equation*}

\subsection{Appropriate Function on Manifold}\label{subsec: Function on Manifold Disks to Crab}

We wish to show that for every $r$ in $[0, \rho]$, there exists a $\tilde{v}_{\varepsilon}^{r,g_{1}}$, in $W^{1, \infty} (N) \subset W^{1,2} (N)$, such that, for every $(x,t)$ in $\tilde{T}$, 
\begin{equation*}
    \tilde{v}_{\varepsilon}^{r,g_{1}} (F(x,t)) = v_{\varepsilon}^{r,g_{1}} (x,t).
\end{equation*}
This implies that $\mathcal{F}_{\varepsilon, \lambda} (\tilde{v}_{\varepsilon}^{r,g_{1}}) (N) = \mathcal{F}_{\varepsilon, \lambda} (v_{\varepsilon}^{r,g_{1}}) (\tilde{T})$.
Indeed, this follows from the fact that $\mu_{g} (N \setminus F(\tilde{T}) ) = \mu_{g} ( \text{Cut} (M) \cup (\overline{M} \setminus M) ) = 0$, and $F \colon \tilde{T} \rightarrow F(\tilde{T})$ is a bijection between open sets, Remark \ref{rem: Cut M and F tilde T are disjoint in N},
\begin{eqnarray*}
    \mathcal{F}_{\varepsilon, \lambda} (\tilde{v}_{\varepsilon}^{r, g_{1}}) (N) &=& \mathcal{F}_{\varepsilon, \lambda} (\tilde{v}_{\varepsilon}^{r, g_{1}}) (N \setminus ( \text{Cut} (M) \cup (\overline{M} \setminus M) ) ), \\
    &=& \mathcal{F}_{\varepsilon, \lambda} (\tilde{v}_{\varepsilon}^{r, g_{1}}) (F (\tilde{T})), \\
    &=& \mathcal{F}_{\varepsilon, \lambda} (v_{\varepsilon}^{r, g_{1}}) (\tilde{T}).
\end{eqnarray*}
We have the following, 
\begin{equation*}
    B_{\delta}^{N} (z_{0}) = \Upsilon_{1} \sqcup A \sqcup \Upsilon_{2},
\end{equation*}
where, 
\begin{equation*}
    \Upsilon_{1} = \{ y \in B_{\delta}^{N} (z_{0}) \colon \tilde{d}_{1} (y) > \tilde{d}_{2} (y)\},
\end{equation*}
and, 
\begin{equation*}
    \Upsilon_{2} = \{ y \in B_{\delta}^{N} (z_{0}) \colon \tilde{d}_{2} (y) > \tilde{d}_{1} (y)\}.
\end{equation*}
Recall Remark \ref{rem: choice of delta for set d 1 = d 2},
\begin{equation*}
    A = \{ y \in B_{\delta}^{N} (z_{0}) \colon \tilde{d}_{1} (y) = \tilde{d}_{2} (y)\},
\end{equation*}
is a smooth $n$-submanifold in $B_{\delta}^{N}(z_{0})$. 
Recall the diffeomorphisms, for $i= 1, \, 2$, defined in Remark \ref{rem: diffeos F i}, 
\begin{equation*}
    F_{i} \colon \tilde{V}_{i} \subset \tilde{M} \times \mathbb{R} \rightarrow B_{2 \delta}^{N} (z_{0}).
\end{equation*}
We then define, $\tilde{v}_{\varepsilon}^{r, g_{1}}$, 
\begin{equation*}
    \tilde{v}_{\varepsilon}^{r,g_{1}} (y) = \begin{cases}
     \overHe_{\varepsilon}(\tilde{d}(y)), & y \not\in B_{\delta}^{N} (z_{0}), \\
     v_{\varepsilon}^{r, g_{1}} (F_{1}^{-1} (y)), & y \in \overline{\Upsilon_{1}} \cap B_{\delta}^{N} (z_{0}), \\
     v_{\varepsilon}^{r,g_{1}} (F_{2}^{-1} (y)), & y \in \overline{\Upsilon_{2}} \cap B_{\delta}^{N} (z_{0}). 
    \end{cases}
\end{equation*}

\bigskip 
 
For $(x,t)$ in $\tilde{T}$, we have $\tilde{v}_{\varepsilon}^{r, g_{1}} (F(x,t)) = v_{\varepsilon}^{r, g_{1}} (x,t)$. 
Indeed, first we consider the case that $F(x,t)$ lies in $\Upsilon_{1} \cup \Upsilon_{2}$. 
In $\Upsilon_{i}$, $F = F_{i}$, and we have, 
\begin{equation*}
    \tilde{v}_{\varepsilon}^{r, g_{1}} (F(x,t)) = v_{\varepsilon}^{r,g_{1}} (F_{i}^{-1} (F(x,t))) = v_{\varepsilon}^{r, g_{1}} (x,t).
\end{equation*}
As $A \subset \text{Cut} (M)$, we know that $F(x,t)$ cannot lie on $A$. 
Last case to consider is $F(x,t)$ lies in $N \setminus B_{\delta}^{N} (z_{0})$. 
By Remark \ref{rem: choice of rho so that B l times -rho rho fits into B delta} $(x,t)$ must lie in $\tilde{T} \setminus (B_{2l} \times (-2 \rho, 2 \rho))$. 
If $x$ lies in $\tilde{M} \setminus B_{2 l}$, then $f (x) = 0$, and, 
\begin{equation*}
    v_{\varepsilon}^{r, g_{1}} (x,t) = \overHe_{\varepsilon} (t) = \overHe_{\varepsilon}(\tilde{d}(F(x,t))) = \tilde{v}_{\varepsilon}^{r, g_{1}} (F(x,t)).
\end{equation*}
If $x$ lies in $B_{2l}$, then $|t| \geq 2 \rho > r |f(x)| + 2 \varepsilon \Lambda$, and therefore, 
\begin{equation*}
    v_{\varepsilon}^{r, g_{1}} (x,t) = \overHe_{\varepsilon}(t - r f(x)) = \begin{cases}
        1, & t \geq 2 \rho > r f(x) + 2 \varepsilon \Lambda, \\
        -1, & t \leq - 2 \rho < r f (x) - 2 \varepsilon \Lambda.
    \end{cases}
\end{equation*}
Also, $\tilde{d} (F(x,t)) = t$, implies that, 
\begin{equation*}
    \tilde{v}_{\varepsilon}^{r, g_{1}} (F(x,t)) = \overHe_{\varepsilon} (t) = \begin{cases}
        1, & t \geq 2 \rho > 2 \varepsilon \Lambda, \\
        -1, & t \leq - 2 \rho < - 2 \varepsilon \Lambda.
    \end{cases} 
\end{equation*}
Therefore, for all $(x,t)$ in $\tilde{T}$, we have that $v_{\varepsilon}^{r,g_{1}} (x,t) = \tilde{v}_{\varepsilon}^{r, g_{1}} (F(x,t))$.

\bigskip 

We now just look to show that $\tilde{v}_{\varepsilon}^{r, g_{1}}$ lies in $W^{1, \infty} (N)$. 
First consider $y$ in $N \setminus F(B_{2l} \times (-2 \rho, 2 \rho))$. 
There exists an $x$ in $\tilde{M}$, such that, $F(x, \tilde{d} (y)) = y$, and $(x,\tilde{d} (y))$ lies in $(\tilde{M} \times \mathbb{R}) \setminus (B_{2 l} \times (-2 \rho, 2 \rho))$. 
By previous argument we see that, 
\begin{equation*}
    \tilde{v}_{\varepsilon}^{r,g_{1}} (y) = \overHe_{\varepsilon} (\tilde{d} (y)).
\end{equation*}
and therefore, $\tilde{v}_{\varepsilon}^{r,g_{1}}$ is Lipschitz on the set $N \setminus F(B_{2 l} \times (-2 \rho, 2 \rho))$. 

\bigskip 

As 
\begin{equation*}
    F(B_{2l} \times (-2 \rho, 2 \rho)) \subset \subset B_{\delta}^{N} (z_{0}),
\end{equation*}
showing that $\tilde{v}_{\varepsilon}^{r, g_{1}}$ is Lipschitz on $B_{\delta}^{N} (z_{0})$, implies that it is Lipschitz on $N$.

\bigskip 

Consider $y$ on $A$, then $\tilde{d}_{1} (y) = \tilde{d}_{2} (y)$, and by construction of $f$ and $\Psi$, 
\begin{equation*}
    f(\pi (F_{1}^{-1} (y))) = f(\pi (F_{2}^{-1} (y))).
\end{equation*}
Therefore, 
\begin{equation*}
    v_{\varepsilon}^{r,g_{1}} (F_{1}^{-1} (y)) = v_{\varepsilon}^{r,g_{1}} (F_{2}^{-1} (y)),
\end{equation*}
and $\tilde{v}_{\varepsilon}^{r,g_{1}}$ is well-defined and continuous across the smooth $n$-submanifold $A$.
Thus, we have that $\tilde{v}_{\varepsilon}^{r, g_{1}}$ lies in $W^{1, \infty} (B_{\delta}^{N} (z_{0}))$.

\subsection{Continuity of the Path}\label{subsec: Continuity of path from disks to crab}

We show that the path, 
\begin{eqnarray*}
    \gamma \colon [0, \rho] &\rightarrow& W^{1,2} (N), \\
    r &\mapsto& \tilde{v}_{\varepsilon}^{r,g_{1}},
\end{eqnarray*}
is continuous in $W^{1,2} (N)$. 

\bigskip 

Take $r$ and $s$ in $[0, \rho]$. 
Recalling that $\mu_{g} (N \setminus F(\tilde{T}) ) = \mu_{g} ( \text{Cut}(M) \cup (\overline{M} \setminus M) ) = 0$,
\begin{eqnarray*}
    \| \tilde{v}_{\varepsilon}^{r, g_{1}} - \tilde{v}_{\varepsilon}^{s, g_{1}} \|_{L^{2} (N)}^{2} &=& \int_{F(\tilde{T})} |\tilde{v}_{\varepsilon}^{r, g_{1}} - \tilde{v}_{\varepsilon}^{s, g_{1}}|^{2}, \\
    &=& \int_{\mathbb{R}} \int_{\tilde{M}} |\overHe_{\varepsilon} (t - r f(x)) - \overHe_{\varepsilon} (t - s f(x))|^{2} \theta_{t} (x) \, d \mathcal{H}^{n} (x) \, dt, \\
    &\xrightarrow{s \rightarrow r}& 0,
\end{eqnarray*}
by Dominated Convergence Theorem.

\bigskip 

Noting that, $\tilde{v}_{\varepsilon}^{r,g_{1}} = \tilde{v}_{\varepsilon}^{r,g_{1}}$ on $N \setminus B_{\delta}^{N} (z_{0})$, for all $r$ in $[0, \rho]$, and $\mu_{g} (B_{\delta}^{N}(z_{0}) \setminus (\Upsilon_{1} \cup \Upsilon_{2})) = \mu_{g} (A) = 0$,
\begin{equation*}
    \| \nabla \tilde{v}_{\varepsilon}^{r, g_{1}} - \nabla \tilde{v}_{\varepsilon}^{s, g_{1}} \|_{L^{2} (N)}^{2} = \int_{\Upsilon_{1} \cup \Upsilon_{2}}  |\nabla \tilde{v}_{\varepsilon}^{r, g_{1}} - \nabla \tilde{v}_{\varepsilon}^{s, g_{1}}| \, d \mu_{g}.
\end{equation*}
As $F_{i}^{-1} \colon (\Upsilon_{i}, g) \rightarrow (F_{i}^{-1} (\Upsilon_{i}), h)$ is an isometry, we have,
\begin{eqnarray*}
    \| \nabla \tilde{v}_{\varepsilon}^{r, g_{1}} - \nabla \tilde{v}_{\varepsilon}^{s, g_{1}} \|_{L^{2} (N)}^{2} &=& \int_{F_{1}^{-1} (\Upsilon_{1}) \cup F_{2}^{-1} (\Upsilon_{2})} |\nabla v_{\varepsilon}^{r,g_{1}} (x,t) - \nabla v_{\varepsilon}^{s,g_{1}} (x,t)|^{2}, \\
    &=& \int_{F_{1}^{-1} (\Upsilon_{1}) \cup F_{2}^{-1} (\Upsilon_{2})} (\overHe_{\varepsilon}' (t - r f(x)) - \overHe_{\varepsilon}' (t - s f(x)))^{2} \\
    && \hspace{2cm} + |\nabla_{x} f(x)|^{2} (r \overHe_{\varepsilon}' (t - r f(x)) - s \overHe_{\varepsilon}' (t - s f(x)))^{2}, \\
    &\xrightarrow{s \rightarrow r}& 0,
\end{eqnarray*}
by Dominated Convergence Theorem.

\section{Path to $a_{\varepsilon}$}\label{sec: path to minus 1}

\subsection{Fixed Energy Gain Away from Non-Embedded Point}\label{sec: Crab Legs Forward Away From Body}

We construct the path from (2) to (3) in Figure \ref{fig: The Path}.

\bigskip 

Recall $\tilde{f}$ from Section \ref{subsec: construction about non-embed point} and set, 
\begin{equation*}
    g_{2}(r, x) = \rho f (x) + r \tilde{f}(x),
\end{equation*}
for $r$ in $[0,r_{0}]$, where $r_{0} \in (0, \min\{1, \text{diam}(N)/2\})$, will be chosen later. 
Denote, $A_{L}^{k} = B_{L} \setminus B_{\frac{L}{k}}$.

\begin{rem}\label{rem: Choice of epsilon 3}
    We choose $0 < \varepsilon_{3} \leq \varepsilon_{2}$, such that $2 \varepsilon_{3} \Lambda = 6 \varepsilon_{3} |\log \, \varepsilon_{3}| < < r_{0}$.
    From here on we consider $\varepsilon$ on $(0, \varepsilon_{3})$.
\end{rem}

We slightly edit the Base Computation in Section \ref{sec: Baseline Computation}. 
Consider $r > 2 \varepsilon \Lambda$,
\begin{equation*}
    \mathcal{F}_{\varepsilon, \lambda} (v_{\varepsilon}^{r,g_{2}}) - \mathcal{F}_{\varepsilon, \lambda} (v_{\varepsilon}^{r, g_{2}}) = \rom{1}_{\varepsilon}^{r, g_{2}} + (\rom{2}_{\varepsilon}^{r, g_{2}} - \rom{2}_{\varepsilon}^{2 \varepsilon \Lambda, g_{2}} ) + \rom{2}_{\varepsilon}^{2 \varepsilon \Lambda, g_{2}}.
\end{equation*}
We have,
\begin{equation}\label{eqn: rom 2 g 2 r minus rom 2 g 2 epsilon}
    \begin{split}
    \rom{2}_{\varepsilon}^{r,g_{2}} - \rom{2}_{\varepsilon}^{2 \varepsilon \Lambda, g_{2}} = & - \int_{2 \varepsilon \Lambda}^{r} \int_{\tilde{M} \setminus B_{\frac{L}{k}}} \tilde{f} (x) Q_{\varepsilon}(\sigma^{+}(x) - s \tilde{f}(x)) \, \theta^{+}(x) \, d\mathcal{H}^{n}(x) \, ds \\
    & + \int_{2 \varepsilon \Lambda}^{r} \int_{\tilde{M} \setminus B_{\frac{L}{k}}} \tilde{f}(x) Q_{\varepsilon}(\sigma^{-}(x) - s \tilde{f}(x)) \, \theta^{-}(x) \, d\mathcal{H}^{n}(x) \, ds \\
    & + \int_{2 \varepsilon \Lambda}^{r} \int_{\tilde{M} \setminus B_{\frac{L}{k}}} \tilde{f}(x) \int_{\sigma^{-}(x)}^{\sigma^{+}(x)} Q_{\varepsilon}(t - s \tilde{f}(x)) \, (\lambda - H_{t}(x)) \, \theta_{t}(x) \, dt \, d\mathcal{H}^{n}(x) \, ds \\
    &  + \lambda \int_{2 \varepsilon \Lambda}^{r} \int_{\tilde{M}} \Theta_{\varepsilon, g_{2}}^{1} (s,x) - \Theta_{\varepsilon, g_{2}}^{2} (s,x) \, d\mathcal{H}^{n}(x) \, ds.
    \end{split}
\end{equation}

\bigskip 

Considering the first term on the right-hand side of (\ref{eqn: rom 2 g 2 r minus rom 2 g 2 epsilon}), 
\begin{equation*}
    - \int_{2 \varepsilon \Lambda}^{r} \int_{\tilde{M} \setminus B_{\frac{L}{k}}} \tilde{f} (x) Q_{\varepsilon}(\sigma^{+}(x) - s \tilde{f}(x)) \, \theta^{+}(x) \, d\mathcal{H}^{n}(x) \, ds \leq 0.
\end{equation*}

Considering the second term on the right-hand side of (\ref{eqn: rom 2 g 2 r minus rom 2 g 2 epsilon}), and by applying similar arguments for when we considered the corresponding term on the right hand side of (\ref{eqn: rom 2 for g 1}) in Section \ref{subsec: Path From Disks to Crab, calculation},
\begin{eqnarray*}
    &&\int_{2 \varepsilon \Lambda}^{r} \int_{\tilde{M} \setminus B_{\frac{L}{k}}} \tilde{f}(x) Q_{\varepsilon}(\sigma^{-}(x) - s \tilde{f}(x)) \, \theta^{-}(x) \, d\mathcal{H}^{n}(x) \, ds \\ 
    && \hspace{5cm} \leq C_{2} \mathcal{H}^{n} ( \{ x \colon x \in \tilde{M} \setminus B_{\frac{L}{k}}, \, \sigma^{-}(x) \geq 2 \varepsilon \Lambda (\tilde{f}(x) - 1) \} ),
\end{eqnarray*}
where we are potentially rechoosing $C_{2} = C_{2} (M, N, g, W, \lambda)< + \infty$.

\bigskip 

Considering the third term on the right-hand side of (\ref{eqn: rom 2 g 2 r minus rom 2 g 2 epsilon}). 
Applying similar arguments in $A_{L}^{k}$ from when we considered the corresponding term on the right-hand side of (\ref{eqn: rom 2 for g 1}) in Section \ref{subsec: Path From Disks to Crab, calculation},
\begin{equation}\label{eqn: Fixed energy gain integral term with mean curvature of level sets}
    \begin{split}
    & \int_{2 \varepsilon \Lambda}^{r} \int_{\tilde{M} \setminus B_{\frac{L}{k}}} \tilde{f}(x) \int_{\sigma^{-}(x)}^{\sigma^{+}(x)} Q_{\varepsilon}(t - s \tilde{f}(x)) \, (\lambda - H_{t}(x)) \, \theta_{t}(x) \, dt \, d\mathcal{H}^{n}(x) \, ds \\
    & \hspace{2cm} \leq \int_{2 \varepsilon \Lambda}^{r} \int_{\tilde{M} \setminus B_{L}} \int_{-2 \varepsilon \Lambda}^{2 \varepsilon \Lambda} Q_{\varepsilon} (\xi) (\lambda - H_{\xi + s} (x)) \theta_{\xi + s} (x) \, d \xi \, d \mathcal{H}^{n}(x) \, ds \\
    & \hspace{3cm} + C_{2} \int_{A_{L}^{k}} q_{\varepsilon}^{2} (x) \, d \mathcal{H}^{n} (x),
    \end{split}
\end{equation}
where, $q_{\varepsilon}^{2} (x) \coloneqq \max_{t \in [-2 \varepsilon \Lambda, 4 \varepsilon \Lambda]} ( \lambda - H_{t} (x)) \theta_{t} (x)$, and we are potentially rechoosing $C_{2} = C_{2} (M, N, g, \lambda, W)$.

\bigskip 

Define the following measurable set, 
\begin{equation*}
    \Omega_{r} = \{ x \in \tilde{M} \colon \, \sigma^{+}(x) > 2 r \, \}.
\end{equation*}

\begin{rem}\label{rem: second choice of L}
    We choose $L = L (z_{0}, N, M, g, \delta) > 0$, such that, 
    \begin{equation*}
        \mathcal{H}^{n} (\tilde{M} \setminus B_{L}) > \frac{3}{4} \mathcal{H}^{n} (\tilde{M}).
    \end{equation*}
    Then we can find an $r_{0} = r_{0}(z_{0}, M, N, g, \delta, L) > 0$, such that, for all $r$ in $[0, r_{0}]$, 
\begin{equation*}
    \mathcal{H}^{n} (\{ (x, 2r) \colon \, x \in \Omega_{r} \setminus B_{L} \}) > \frac{1}{2} \mathcal{H}^{n}(\tilde{M}).
\end{equation*}
\end{rem}

For all $x$ in $\Omega_{r}$, $s$ in $(2 \varepsilon \Lambda, r)$, and $\xi$ in $[- 2 \varepsilon \Lambda, 2 \varepsilon \Lambda]$, $s + \xi$ lies in $(0, \sigma^{+}(x))$. 
Therefore, recalling bounds on $H_{t}$ and $\theta_{t}$ from Remark \ref{rem: Bounds on mean curvature of level sets} and Claim \ref{claim: derivative of log area element}, we have,
\begin{equation*}
    (\lambda - H_{\xi + s}(x)) \theta_{\xi + s} (x) < - m (s + \xi) \theta_{\xi + s} \leq - m (s - 2 \varepsilon \Lambda) \theta_{2r} (x),
\end{equation*} 
Then for $r$ in $(2 \varepsilon \Lambda, r_{0}]$, we compute an energy decrease from the first term on the right-hand side of (\ref{eqn: Fixed energy gain integral term with mean curvature of level sets}),
\begin{equation*}
    \int_{2 \varepsilon \Lambda}^{r} \int_{\tilde{M} \setminus B_{L}} \int_{-2 \varepsilon \Lambda}^{2 \varepsilon \Lambda} Q_{\varepsilon} (\xi) (\lambda - H_{\xi + s} (x)) \theta_{\xi + s} (x) \, d \xi \, d \mathcal{H}^{n}(x) \, ds \leq - \frac{m \sigma}{2} \mathcal{H}^{n}(\tilde{M}) r^{2} + C_{2} \varepsilon \Lambda,
\end{equation*}
potentially rechoosing $C_{2} = C_{2} (N, M, g, \lambda, W) < + \infty$.

\bigskip

For $r$ in $[0, 2 \varepsilon \Lambda]$, by repeating arguments similar to those in Section \ref{subsec: Approximating function for CMC}, 
\begin{eqnarray*}
    \rom{2}_{\varepsilon}^{r, g_{2}} &\leq& C_{2} \bigg( \int_{\tilde{M} \setminus B_{\frac{L}{k}}} m_{\varepsilon}^{1} (x) \, d \mathcal{H}^{n} (x) + \varepsilon \Lambda \bigg),
\end{eqnarray*}
where we are potentially rechoosing $C_{2} = C_{2} (N, M, g, W, \lambda)$, and $m_{\varepsilon}^{1} (x) = \max_{t \in [- 2 \varepsilon \Lambda, 4 \varepsilon \Lambda]} \theta_{t} (x) - \min_{t \in [-2 \varepsilon \Lambda, 4 \varepsilon \Lambda]} \theta_{t} (x)$.

\bigskip 

For $r$ in $[0, r_{0}]$, consider the term, 
\begin{equation*}
    \rom{1}_{\varepsilon}^{r, g_{2}} = \int_{A_{L}^{k}} \int_{- 2 \varepsilon \Lambda}^{2 \varepsilon \Lambda} \frac{\varepsilon}{2} ((\overHe)' (\xi))^{2} | r \nabla \tilde{f} (x)|_{(x,r \tilde{f}(x) + \xi)}^{2} \theta_{r \tilde{f}(x) + \xi} (x) \, d\xi \, d \mathcal{H}^{n} (x).
\end{equation*}
By choice of $L$ and $r_{0}$ in Remark \ref{rem: Choice of L and r 0 to fit in ball B delta}, and constant $C_{3} = C_{3} (z_{0}, M, N, g, \delta, \lambda, W)$ from Remark \ref{rem: Choice of delta for normal neighbourhood, and C 1}, we have, for all $x$ in $A_{L}^{k}$, $r$ in $[0, r_{0}]$, and $\xi$ in $[-2 \varepsilon \Lambda, 2 \varepsilon \Lambda]$,
\begin{equation*}
    |\nabla \tilde{f} (x)|_{(x,r \tilde{f}(x) + \xi)} \leq C_{3}^{1/2}  |\nabla \tilde{f} (x)|_{(x, 0)}
\end{equation*}
Thus we have, 
\begin{equation*}
    \rom{1}_{\varepsilon}^{r, g_{2}} \leq C_{3} \|\nabla \tilde{f}\|_{L^{2}(\tilde{M})}^{2} r^{2}.
\end{equation*}
Again we are potentially rechoosing $C_{3} = C_{3} (z_{0}, M, N, g, \delta, W, \lambda)$.

\bigskip 

\begin{rem}\label{rem: Choice of gamma and k}
    Choose $k = k (z_{0}, M, N, g, \delta, W, \lambda, L)$ such that 
    \begin{equation*}
        \| \nabla \tilde{f} \|_{L^{2}(\tilde{M})}^{2} < C_{3}^{-1} \frac{m \sigma}{4} \mathcal{H}^{n} (\tilde{M}).
    \end{equation*}
\end{rem}

Therefore, after potentially rechoosing $C_{3} = C_{3} (z_{0}, M, N, g, \delta, W, \lambda)$,
\begin{equation*}
    \rom{1}_{\varepsilon}^{r,g_{2}} \leq \frac{m \sigma}{4} \mathcal{H}^{n} (\tilde{M}) r^{2}.
\end{equation*}

For $r$ in $(0, 2 \varepsilon \Lambda]$, 
\begin{equation*}
    \mathcal{F}_{\varepsilon, \lambda} (v_{\varepsilon}^{r, g_{2}}) - \mathcal{F}_{\varepsilon, \lambda} (v_{\varepsilon}^{0, g_{2}}) \leq \rom{3}_{\varepsilon}^{3, r},
\end{equation*}
where, 
\begin{eqnarray*}
\rom{3}_{\varepsilon}^{3, r} &=& C_{2} \left( \int_{\tilde{M} \setminus B_{\frac{L}{k}}} m_{\varepsilon} (x) \, d \mathcal{H}^{n} (x) + \varepsilon \Lambda \right),
\end{eqnarray*}
where we are potentially rechoosing $C_{2} = C_{2} (N, M, g, W, \lambda) < + \infty$.
For $r$ in $(2 \varepsilon \Lambda, r_{0}]$, 
\begin{equation*}
    \mathcal{F}_{\varepsilon, \lambda} (v_{\varepsilon}^{r, g_{2}}) - \mathcal{F}_{\varepsilon, \lambda} (v_{\varepsilon}^{0, g_{2}}) \leq  - \frac{m \sigma}{4} \mathcal{H}^{n} (\tilde{M}) r^{2} + \rom{3}_{\varepsilon}^{4,r}
\end{equation*}
where, 
\begin{eqnarray*}
    \rom{3}_{\varepsilon}^{4,r} &=& C_{2} \bigg( \mathcal{H}^{n} ( \{ x \colon x \in \tilde{M} \setminus B_{\frac{L}{k}}, \, \sigma^{-}(x) \geq 2 \varepsilon \Lambda (\tilde{f}(x) - 1) \, \} ) \\
    && +\int_{A_{L}^{k}} q_{\varepsilon}^{2} (x) \, d \mathcal{H}^{n} (x) + \int_{2 \varepsilon \Lambda}^{r} \int_{\tilde{M}} \Theta_{\varepsilon, g_{2}}^{1} (s, x) - \Theta_{\varepsilon, g_{2}}^{2} \, d \mathcal{H}^{n} (x) \, ds \\
    && + \int_{\tilde{M} \setminus B_{\frac{L}{k}}} m_{\varepsilon} (x) \, d \mathcal{H}^{n} (x) + \varepsilon \Lambda \bigg),
\end{eqnarray*}
again, we are potentially rechoosing $C_{2} = C_{2}(M, N, g, W, \lambda)$.

\bigskip 

As $g_{2} (0, x) = g_{1} (\rho, x)$, we have, for $r$ in $(0, 2 \varepsilon \Lambda]$, 
\begin{equation*}
    \mathcal{F}_{\varepsilon, \lambda} (v_{\varepsilon}^{r, g_{2}}) \leq \mathcal{F}_{\varepsilon, \lambda} (v_{\varepsilon}) - \frac{\sigma \mathcal{H}^{n} (A_{l})}{2^{n} - 1} + \rom{3}_{\varepsilon}^{\rho, 2} + \rom{3}_{\varepsilon}^{r, 3}, 
\end{equation*}
and for $r$ in $(2 \varepsilon \Lambda, r_{0}]$, 
\begin{equation*}
    \mathcal{F}_{\varepsilon, \lambda} (v_{\varepsilon}^{r, g_{2}}) \leq \mathcal{F}_{\varepsilon, \lambda} (v_{\varepsilon}) - \frac{\sigma \mathcal{H}^{n} (A_{l})}{2^{n} - 1} - \frac{m \sigma}{4} \mathcal{H}^{n} (\tilde{M}) r^{2} + \rom{3}_{\varepsilon}^{\rho, 2} + \rom{3}_{\varepsilon}^{r, 4}. 
\end{equation*}

We may define the appropriate function on $N$, for $r$ in $[0, r_{0}]$, 
\begin{equation*}
    \tilde{v}_{\varepsilon}^{r,g_{2}} (y) = \begin{cases}
        \overHe_{\varepsilon} (\tilde{d}(y) - r), & y \not\in B_{\delta}^{N} (z_{0}), \\
        v_{\varepsilon}^{r,g_{2}} (F_{1}^{-1} (y)), & y \in \overline{\Upsilon_{1}} \cap B_{\delta}^{N} (z_{0}), \\
        v_{\varepsilon}^{r, g_{2}} (F_{2}^{-1} (y)), & y \in \overline{\Upsilon_{2}} \cap B_{\delta}^{N} (z_{0}).
    \end{cases}
\end{equation*}
Following similar arguments to Sections \ref{subsec: Function on Manifold Disks to Crab}, and \ref{subsec: Continuity of path from disks to crab}, we may show that $\tilde{v}_{\varepsilon}^{r, g_{2}}$ lies in $W^{1, \infty} (N)$, $\mathcal{F}_{\varepsilon, \lambda} (\tilde{v}_{\varepsilon}^{r, g_{2}}) (N) = \mathcal{F}_{\varepsilon, \lambda} (v_{\varepsilon}^{r,g_{2}}) (\tilde{T})$ and that the path $\tilde{v}_{\varepsilon}^{0, g_{2}} \rightarrow \tilde{v}_{\varepsilon}^{r_{0}, g_{2}}$ is continuous in $W^{1,2} (N)$.

\subsection{Reversing Construction of Competitor}\label{sec: Undoing the Crab}

We construct the path from (3) to (4) in Figure \ref{fig: The Path}.

\bigskip 

For $r$ in $[0, \rho]$, we set, 
\begin{equation*}
    g_{3} (r, x) = r_{0} \tilde{f} (x) + (\rho - r) f(x).
\end{equation*}
For $x$ in $B_{2l}$, 
\begin{equation*}
    g_{3} (r, x) = (\rho - r) f(x) = g_{1} (\rho - r, x),
\end{equation*}
and for $x$ in $\tilde{M} \setminus B_{2l}$, 
\begin{equation*}
    g_{3} (r,x) = r_{0} \tilde{f} (x) = g_{3} (0, x).
\end{equation*}
Therefore, 
\begin{equation*}
    \mathcal{F}_{\varepsilon, \lambda} (v_{\varepsilon}^{r, g_{3}}) - \mathcal{F}_{\varepsilon, \lambda} (v_{\varepsilon}^{0, g_{3}}) = \mathcal{F}_{\varepsilon, \lambda} (v_{\varepsilon}^{\rho - r, g_{1}}) - \mathcal{F}_{\varepsilon, \lambda} (v_{\varepsilon}^{\rho, g_{1}}).
\end{equation*}
As $g_{1} (\rho,x) = g_{2} (0, x)$, and $g_{3}(0, x) = g_{2} (r_{0}, x)$, we have, 
\begin{equation*}
    \mathcal{F}_{\varepsilon, \lambda} (v_{\varepsilon}^{r, g_{3}}) = \mathcal{F}_{\varepsilon, \lambda} (v_{\varepsilon}^{\rho - r, g_{1}}) + \mathcal{F}_{\varepsilon, \lambda} (v_{\varepsilon}^{r_{0}, g_{2}}) - \mathcal{F}_{\varepsilon, \lambda} (v_{\varepsilon}^{0, g_{2}}).
\end{equation*} 

\begin{rem}\label{rem: choice of rho based on r 0}
    We choose $\rho > 0$, such that, 
\begin{equation*}
    \frac{\sigma \mathcal{H}^{n} (A_{l})}{2^{n} - 1} < \frac{m \sigma}{4} \mathcal{H}^{n} (\tilde{M}) r_{0}^{2}.
\end{equation*}
\end{rem}

Therefore, we have that 
\begin{equation*}
    \mathcal{F}_{\varepsilon, \lambda} (v_{\varepsilon}^{r, g_{3}}) < \mathcal{F}_{\varepsilon, \lambda} (v_{\varepsilon}^{\rho - r, g_{1}}) - \frac{\sigma \mathcal{H}^{n} (A_{l})}{2^{n} - 1} + \rom{3}_{\varepsilon}^{4,r_{0}}.
\end{equation*} 
Furthermore, for $r$ in $[0, \rho - 4 \varepsilon \Lambda]$, we have, 
\begin{eqnarray*}
    \mathcal{F}_{\varepsilon, \lambda} (v_{\varepsilon}^{r, g_{3}}) &<& \mathcal{F}_{\varepsilon, \lambda} (v_{\varepsilon}) + \frac{\sigma \mathcal{H}^{n} (A_{l})}{2(2^{n} - 1)} + \rom{3}_{\varepsilon}^{\rho - r, 2} - \frac{\sigma \mathcal{H}^{n} (A_{l})}{2^{n} - 1} + \rom{3}_{\varepsilon}^{4,r_{0}}, \\
    &=& \mathcal{F}_{\varepsilon, \lambda} (v_{\varepsilon}) - \frac{\sigma \mathcal{H}^{n} (A_{l})}{2(2^{n} - 1)} + \rom{3}_{\varepsilon}^{\rho - r, 2} + \rom{3}_{\varepsilon}^{4,r_{0}}.
\end{eqnarray*} 
For $r$ in $(\rho - 4 \varepsilon \Lambda, \rho]$, we similarly have, 
\begin{equation*}
    \mathcal{F}_{\varepsilon, \lambda} (v_{\varepsilon}^{r, g_{3}}) < \mathcal{F}_{\varepsilon, \lambda} (v_{\varepsilon}) - \frac{\sigma \mathcal{H}^{n} (A_{l})}{2(2^{n} - 1)} + \rom{3}_{\varepsilon}^{\rho - r, 1} + \rom{3}_{\varepsilon}^{4,r_{0}}.
\end{equation*}

\bigskip 

We define the appropriate function on $N$. 
For $r$ in $[0, \rho]$, 
\begin{equation*}
    \tilde{v}_{\varepsilon}^{r,g_{3}} (y) = \begin{cases}
        \overHe_{\varepsilon} (\tilde{d}(y) - r_{0}), & y \not\in B_{\delta}^{N} (z_{0}), \\
        v_{\varepsilon}^{r,g_{3}} (F_{1}^{-1} (y)), & y \in \overline{\Upsilon_{1}} \cap B_{\delta}^{N} (z_{0}), \\
        v_{\varepsilon}^{r, g_{3}} (F_{2}^{-1} (y)), & y \in \overline{\Upsilon_{2}} \cap B_{\delta}^{N} (z_{0}).
    \end{cases}
\end{equation*}
Following similar arguments to Sections \ref{subsec: Function on Manifold Disks to Crab}, and \ref{subsec: Continuity of path from disks to crab}, we may show that $\tilde{v}_{\varepsilon}^{r, g_{3}}$ lies in $W^{1, \infty} (N)$, $\mathcal{F}_{\varepsilon, \lambda} (\tilde{v}_{\varepsilon}^{r, g_{3}}) (N) = \mathcal{F}_{\varepsilon, \lambda} (v_{\varepsilon}^{r,g_{3}}) (\tilde{T})$ and that the path $\tilde{v}_{\varepsilon}^{0, g_{3}} \rightarrow \tilde{v}_{\varepsilon}^{\rho, g_{3}}$ is continuous in $W^{1,2} (N)$.

\subsection{Lining Up With Level Set $\Gamma_{r_{0}}$}\label{sec: Disks Push Up}

We construct path from (4) to (5) in Figure \ref{fig: The Path}

\bigskip 

For $r$ in $[0, r_{0}]$, consider,
\begin{equation*}
    g_{4} (r, x) = r_{0} \tilde{f} (x) + r (1 - \tilde{f} (x)) = (r_{0} - r) \tilde{f} (x) + r \geq r.
\end{equation*}

\bigskip 

By applying similar arguments to those in Section \ref{subsec: Path From Disks to Crab, calculation}, we have
\begin{eqnarray*}
    \rom{2}_{\varepsilon}^{r, g_{4}} &\leq& C_{3} \bigg( \mathcal{H}^{n} (\{ x \in B_{L} \colon \sigma^{-} (x) \geq - 2 \varepsilon \Lambda \}) + \varepsilon \Lambda \\
    && \hspace{1cm} + \int_{0}^{r} \int_{\tilde{M}} \Theta_{\varepsilon, g_{4}}^{1} (s, x) - \Theta_{\varepsilon, g_{4}}^{2} (s,x) \, d \mathcal{H}^{n} (x) \, ds \bigg),
\end{eqnarray*}
where we are potentially rechoosing $C_{3} = C_{3} (z_{0}, M, N, g, \delta, W, \lambda) < + \infty$.

\bigskip 

We turn our attention to the term, 
\begin{eqnarray*}
    \rom{1}_{\varepsilon}^{r,g_{4}} &=& \int_{A_{L}^{k}} \int_{\sigma^{-}(x)}^{\sigma^{+}(x)} \frac{\varepsilon}{2} ( (\overHe_{\varepsilon})'(t - (r_{0} - r) \tilde{f}(x) - r))^{2} (r_{0} - r)^{2} |\nabla_{x} \tilde{f} (x) |^{2} \, \theta_{t}(x) \, dt \, d \mathcal{H}^{n}(x) \\
    && \hspace{1cm} - \int_{A_{L}^{k}} \int_{\sigma^{-}(x)}^{\sigma^{+}(x)} \frac{\varepsilon}{2} ( (\overHe_{\varepsilon})'(t - r_{0} \tilde{f}(x)))^{2} r_{0}^{2} |\nabla_{x} \tilde{f} (x) |^{2} \, \theta_{t}(x) \, dt \, d \mathcal{H}^{n}(x).
\end{eqnarray*}
For $r$ in $[0, r_{0}]$,
\begin{eqnarray*}
    && \int_{\sigma^{-}(x)}^{\sigma^{+}(x)} \frac{\varepsilon}{2} ( (\overHe_{\varepsilon})'(t - (r_{0} - r) \tilde{f}(x) - r))^{2} |\nabla_{x} \tilde{f} (x) |_{(x,t)}^{2} \, \theta_{t}(x) \, dt \\
    && \hspace{1cm} \leq (\sigma + \beta \varepsilon^{2}) \max_{t \in [-2 \varepsilon \Lambda, 2 \varepsilon \Lambda]}  |\nabla \tilde{f} (x)|_{(x, t + g_{4} (r,x))}^{2} \theta_{t + g_{4} (r,x)} (x), \\
    && \hspace{1.5cm} = \sigma | \nabla \tilde{f} (x) |_{(x, g_{4} (r,x))}^{2} \theta_{g_{4} (r,x)} (x) + \beta \varepsilon^{2} \max_{t \in [-2 \varepsilon \Lambda, 2 \varepsilon \Lambda]}  |\nabla \tilde{f} (x)|_{(x, t + g_{4} (r,x))}^{2} \theta_{t + g_{4} (r,x)} (x) \\
    && \hspace{2cm} + \sigma \left( \max_{t \in [-2 \varepsilon \Lambda, 2 \varepsilon \Lambda]}  |\nabla \tilde{f} (x)|_{(x, t + g_{4} (r,x))}^{2} \theta_{t + g_{4} (r,x)} (x) - | \nabla \tilde{f} (x) |_{(x, g_{4} (r,x))}^{2} \theta_{g_{4} (r,x)} (x) \right) 
\end{eqnarray*}
Denote the functions,
\begin{equation*}
    q_{\varepsilon}^{3} (x, r) = \max_{t \in [-2 \varepsilon \Lambda, 2 \varepsilon \Lambda]}  |\nabla \tilde{f} (x)|_{(x, t + g_{4} (r,x))}^{2} \theta_{t + g_{4} (r,x)} (x) - | \nabla \tilde{f} (x) |_{(x, g_{4} (r,x))}^{2} \theta_{g_{4} (r,x)} (x),
\end{equation*}
and, 
\begin{equation*}
    p^{1}_{\varepsilon} (r) = \int_{A_{L}^{k}} q_{\varepsilon}^{3} (x, r) \, d \mathcal{H}^{n} (x).
\end{equation*}

\bigskip 

We have, 
\begin{eqnarray*}
    && \int_{A_{L}^{k}} \int_{\sigma^{-}(x)}^{\sigma^{+}(x)} \frac{\varepsilon}{2} ( (\overHe_{\varepsilon})'(t - (r_{0} - r) \tilde{f}(x) - r))^{2} (r_{0} - r)^{2} |\nabla_{x} \tilde{f} (x) |^{2} \, \theta_{t}(x) \, dt \, d \mathcal{H}^{n}(x) \\
    && \hspace{2cm} \leq \sigma \int_{A_{L}^{k}} (r_{0} - r)^{2} |\nabla \tilde{f} (x)|_{(x, g_{4} (r,x))}^{2} \theta_{g(r,x)} (x) \, d \mathcal{H}^{n}(x) + C_{5} (\varepsilon^{2} + p^{1}_{\varepsilon} (r)).
\end{eqnarray*}
where $C_{5} = C_{5} (z_{0}, N, M, g, \delta, L, r_{0}, k, W, \lambda) < + \infty$.
Similarly, we have, 
\begin{eqnarray*}
    && \int_{A_{L}^{k}} \int_{\sigma^{-}(x)}^{\sigma^{+}(x)} \frac{\varepsilon}{2} ( (\overHe_{\varepsilon})'(t - r_{0} \tilde{f}(x)))^{2} r_{0}^{2} |\nabla_{x} \tilde{f} (x) |^{2} \, \theta_{t}(x) \, dt \, d \mathcal{H}^{n}(x) \\
    && \hspace{2cm} \geq \sigma \int_{A_{L}^{k}} r_{0}^{2} |\nabla \tilde{f} (x)|^{2}_{(x, g_{4}(0,x))} \theta_{g_{4} (0,x)} (x) \, d \mathcal{H}^{n} (x) - C_{5} (\varepsilon^{2} + p_{\varepsilon}^{2} (0)),
\end{eqnarray*}
where, 
\begin{equation*}
    p_{\varepsilon}^{2} (0) = \int_{A_{L}^{k}} q_{\varepsilon}^{4} (x,0) \, d \mathcal{H}^{n} (x), 
\end{equation*}
and 
\begin{equation*}
    q_{\varepsilon}^{4} (x, r) = \min_{t \in [-2 \varepsilon \Lambda, 2 \varepsilon \Lambda]}  |\nabla \tilde{f} (x)|_{(x, t + g_{4} (r,x))}^{2} \theta_{t + g_{4} (r,x)} (x) - | \nabla \tilde{f} (x) |_{(x, g_{4} (r,x))}^{2} \theta_{g_{4} (r,x)} (x) \leq 0
\end{equation*}

Therefore, we have, for $r$ in $[0, r_{0}]$,
\begin{eqnarray*}
    \rom{1}_{\varepsilon}^{r, g_{3}} &\leq& \sigma \int_{A_{L}^{k}} (r_{0} - r)^{2} |\nabla \tilde{f} (x)|^{2}_{(x, g_{4}(r,x))} \theta_{g_{4}(r,x)} (x) - r_{0}^{2} |\nabla \tilde{f} (x)|_{(x, g_{4}(0, x))}^{2} \theta_{g_{4}(0,x)} (x) \, d \mathcal{H}^{n} (x) \\
    && + C_{5} ( \varepsilon^{2} + p_{\varepsilon}^{1} (r) - p_{\varepsilon}^{2} (0) ).
\end{eqnarray*}

\begin{claim}\label{lem: Second choice of r 0}
    There exists an $r_{0} > 0$, such that for all $r$ in $[0, r_{0}]$, 
    \begin{equation*}
        \int_{A_{L}^{k}} (r_{0} - r)^{2} |\nabla \tilde{f} (x)|^{2}_{(x, g_{4}(r,x))} \theta_{g_{4}(r,x)} (x) - r_{0}^{2} |\nabla \tilde{f} (x)|_{(x, g_{4}(0, x))}^{2} \theta_{g_{4}(0,x)} (x) \, d \mathcal{H}^{n} (x) \leq 0.
    \end{equation*}
\end{claim}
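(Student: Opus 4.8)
The plan is to prove, for a final sufficiently small choice of $r_{0}$, the \emph{pointwise} inequality
\[
(r_{0}-r)^{2}\,|\nabla\tilde f(x)|^{2}_{(x,g_{4}(r,x))}\,\theta_{g_{4}(r,x)}(x)\ \le\ r_{0}^{2}\,|\nabla\tilde f(x)|^{2}_{(x,g_{4}(0,x))}\,\theta_{g_{4}(0,x)}(x)
\]
for $\mathcal{H}^{n}$-a.e.\ $x\in A_{L}^{k}$ and all $r\in[0,r_{0}]$; integrating this over $A_{L}^{k}$ then gives the Claim. Fix $x\in A_{L}^{k}$. If $\nabla\tilde f(x)=0$ both sides vanish, so I would assume $\nabla\tilde f(x)\neq 0$. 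Since $\tilde f=1-\tilde\varphi_{\gamma,k}$ and the two bumps making up $\tilde\varphi_{\gamma,k}$ have disjoint supports (the sheets $\tilde D_{1},\tilde D_{2}$ are disjoint in $\tilde M$, carrying opposite unit normals at any common point), we have $0\le\tilde\varphi_{\gamma,k}\le 1$, so $\tilde f\ge 0$ and a zero of $\tilde f$ is a minimum, forcing $\nabla\tilde f=0$ there; hence $\tilde f(x)>0$. Abbreviate $a=g_{4}(0,x)=r_{0}\tilde f(x)$ and $b=g_{4}(r,x)=(r_{0}-r)\tilde f(x)+r$, so that $0<a\le b\le r_{0}$ and $b-a=r(1-\tilde f(x))\le r$.

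Throughout I would keep $r_{0}$ small enough that $F\big(B_{L}\times(-2r_{0},2r_{0})\big)\subset\subset B_{\delta}^{N}(z_{0})$, as already imposed in Remark \ref{rem: Choice of L and r 0 to fit in ball B delta}, and then record two comparisons. First, \emph{monotonicity of the area element}: by Proposition \ref{claim: derivative of log area element} one has $\partial_{t}\log\theta_{t}(x)=-H_{t}(x)$, and $H_{t}(x)\ge\lambda/2>0$ whenever $F(x,t)\in B_{\delta}^{N}(z_{0})$ (Remark \ref{rem: choice of delta for upper and lower mean curvature bounds in ball about z 0} and the discussion after it); so if $(x,a)$ and $(x,b)$ both lie in $\tilde T$ then, using Claim \ref{claim: interval from sigma plus to sigma minus contains all t which equality holds for} to see $\{x\}\times[a,b]\subset\tilde T$, we get $\theta_{b}(x)\le e^{-\frac{\lambda}{2}(b-a)}\theta_{a}(x)\le\theta_{a}(x)$. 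Second, \emph{Lipschitz control of $|\nabla\tilde f|^{2}$ along the normal}: writing $|\nabla\tilde f(x)|^{2}_{(x,t)}=\big|d\exp_{\iota(x)}(t\nu(x))(\iota_{*}\nabla\tilde f(x))\big|^{2}_{g}$ with $\iota_{*}\nabla\tilde f(x)$ independent of $t$, the map $t\mapsto\log|\nabla\tilde f(x)|^{2}_{(x,t)}$ is smooth on the heights with $F(x,t)\in B_{\delta}^{N}(z_{0})$, and by compactness of $\overline{B_{\delta}^{N}(z_{0})}$ together with the evolution equations for the induced metric on the level sets $\Gamma_{t}$ (as in \cite{gray2012tubes}) its $t$-derivative is bounded by a constant $C_{6}=C_{6}(z_{0},N,M,g,\delta)<\infty$ \emph{independent of the later choices $L,k,r_{0},\rho$}; hence $|\nabla\tilde f(x)|^{2}_{(x,b)}\le e^{C_{6}(b-a)}|\nabla\tilde f(x)|^{2}_{(x,a)}\le e^{C_{6}r}|\nabla\tilde f(x)|^{2}_{(x,a)}$.

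I would then make the final choice of $r_{0}$, additionally requiring $C_{6}r_{0}\le 2$ (no constraint if $C_{6}=0$). With this, the function $r\mapsto(r_{0}-r)^{2}e^{C_{6}r}$ has logarithmic derivative $-\tfrac{2}{r_{0}-r}+C_{6}\le-\tfrac{2}{r_{0}}+C_{6}\le 0$ on $[0,r_{0})$, so it is non-increasing and therefore $\le(r_{0})^{2}e^{0}=r_{0}^{2}$ on $[0,r_{0}]$. Now a short case analysis finishes the pointwise inequality. If $(x,a),(x,b)\in\tilde T$, then combining the two comparisons and this calculus estimate gives
\[
(r_{0}-r)^{2}|\nabla\tilde f(x)|^{2}_{(x,b)}\theta_{b}(x)\ \le\ (r_{0}-r)^{2}e^{C_{6}r}|\nabla\tilde f(x)|^{2}_{(x,a)}\theta_{a}(x)\ \le\ r_{0}^{2}|\nabla\tilde f(x)|^{2}_{(x,a)}\theta_{a}(x),
\]
as wanted. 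If $(x,b)\notin\tilde T$ then $\theta_{b}(x)=0$ by the convention of Section \ref{subsec: Area Element}, so the left side is $0\le$ right side. Finally, if $(x,a)\notin\tilde T$, then since $a>0\ge\sigma^{-}(x)$ we must have $a\ge\sigma^{+}(x)$, hence $b\ge a\ge\sigma^{+}(x)$ and $(x,b)\notin\tilde T$ as well, so both sides vanish. In all cases the integrand is $\le 0$, which proves the Claim after the above choice of $r_{0}$.

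\textbf{Main obstacle.} The only step that is not bookkeeping is the uniform Lipschitz estimate for $t\mapsto|\nabla\tilde f(x)|^{2}_{(x,t)}$: one needs the constant $C_{6}$ to be uniform over $x\in A_{L}^{k}$ and to depend only on data fixed \emph{before} $r_{0}$ (namely $z_{0},N,M,g,\delta$), for otherwise the inequality $(r_{0}-r)^{2}e^{C_{6}r}\le r_{0}^{2}$ could not be secured just by shrinking $r_{0}$. This is exactly why the estimate must be stated over the fixed compact neighbourhood $\overline{B_{\delta}^{N}(z_{0})}$; it is the same kind of tube computation (via \cite{gray2012tubes}) already used for $\theta_{t}$ and $H_{t}$ in Section \ref{subsec: Area Element}.
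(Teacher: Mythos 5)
Your proof is correct but takes a genuinely different route from the paper's. The paper argues at the level of the integral: setting $\zeta(x,t)=|\nabla\tilde f(x)|^2_{(x,t)}\theta_t(x)$ and $G(r)=\int_{A_L^k}\zeta(x,g_4(r,x))\,d\mathcal{H}^n$, it bounds $|G'|\le C_6(z_0,\dots,L,k)$, uses $G(0)=\|\nabla\tilde f\|^2_{L^2(\tilde M)}>0$ to keep $G\ge\tfrac12 G(0)$ on $[0,r_0]$, and differentiates $(r_0-r)^2G(r)$ to conclude it is non-increasing once $r_0<G(0)/C_6$. You instead establish the stronger \emph{pointwise} inequality on the integrand, built from three ingredients the paper does not use at this step: (a) monotone decay of $\theta_t$ for $t$ increasing past $a=r_0\tilde f(x)>0$, which follows from $H_t\ge\lambda/2>0$ on $B_\delta^N(z_0)$ (Remark \ref{rem: choice of delta for upper and lower mean curvature bounds in ball about z 0}); (b) the scale-invariance of $\partial_t\log|\nabla\tilde f(x)|^2_{(x,t)}$ in $\nabla\tilde f$, which lets your $C_6$ be a $\delta$-level geometric constant independent of $L,k,\gamma$; and (c) the elementary observation that $(r_0-r)^2e^{C_6r}\le r_0^2$ once $C_6r_0\le2$. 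Your explicit case split using the convention $\theta_t\equiv0$ off $\tilde T$ also handles automatically the points where $(x,g_4(r,x))$ leaves $\tilde T$, whereas the paper's proof asserts (without detailed justification) that $(x,g_4(r,x))\in\tilde T$ whenever $\tilde f(x)\ne0$. The trade-offs: your argument leans on the strict positivity of $H_t$ near $z_0$ (available here because $\lambda>0$ and $\delta$ was chosen accordingly), while the paper's needs only smoothness and the non-degeneracy $G(0)>0$ and would survive without such a sign; conversely, your $C_6$ is cleaner to quantify, the pointwise statement is stronger, and your argument is immune to the degenerate set $\{g_4(r,\cdot)\notin(\sigma^-,\sigma^+)\}$. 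Both routes lead to an admissible $r_0$ with essentially the same smallness requirement.
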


\begin{proof}
    For $(x,t) \in \tilde{M} \times \mathbb{R}$, denote, 
    \begin{equation*}
        \zeta (x,t) \coloneqq |\nabla \tilde{f} (x)|^{2}_{(x,t)} \theta_{t} (x).
    \end{equation*}
    We note that $\zeta \in C^{\infty} (\tilde{T})$, and $\zeta (x,t) = 0$, for $(x,t) \not\in \tilde{T}$.

    \bigskip 

    For $r$ in $[0, r_{0}]$, denote, 
    \begin{equation*}
        G(r) \coloneqq \int_{A_{L}^{k}} \zeta (x, g_{4} (r,x)) \, d \mathcal{H}^{n} (x) = \int_{A_{L}^{k} \cap \{ \tilde{f} \not= 0\}} \zeta (x, g_{4} (r,x)) \, d \mathcal{H}^{n} (x).
    \end{equation*}
    For $r \in [0,r_{0}]$, and $\tilde{f} (x) \not= 0$, we have that, 
    \begin{equation*}
        g_{4} (r,x) \in (\sigma^{-}(x), \sigma^{+} (x)), 
    \end{equation*}
    i.e. $(x, g_{4} (r, x)) \in \tilde{T}$. 
    Therefore, $G$ lies in $C^{\infty} ([0, r_{0}])$, and 
    \begin{equation*}
        G' (r) = \int_{ A_{L}^{k} \cap \{ \tilde{f} \not= 0\} } (\partial_{t} \zeta) (x, g_{4} (r,x)) (1 - \tilde{f} (x)) \, dx.
    \end{equation*}
    We may obtain a bound $|G' (r)| \leq C_{6} = C_{6} (z_{0}, M, N, g, \delta, W, \lambda, L, k) < + \infty$, for $r_{0} \leq R_{0} < + \infty$, $R_{0} = R_{0} (z_{0}, M, N, g, \delta, W, \lambda, L, k)$ fixed. 
    We also have that $G(0) = \|\nabla \tilde{f}\|_{L^{2} (\tilde{M})}^{2} > 0$, and we may choose $r_{0} > 0$, small enough such that, 
    \begin{equation*}
        \min_{r \in [0,r_{0}]} G(r) \geq \frac{1}{2} G(0) > 0.
    \end{equation*}

    \bigskip 

    Denoting, 
    \begin{equation*}
        F(r) \coloneqq (r_{0} - r)^{2} G(r).
    \end{equation*}
    Differentiating we obtain, 
    \begin{equation*}
        F'(r) = (r - r_{0}) (2 G(r) + (r - r_{0}) G' (r)).
    \end{equation*}
    Thus setting, $r_{0} < G(0) / C_{6}$, we have that $F' (r) \leq 0$, for all $r$ in $[0, r_{0}]$.
    This completes the proof. 
\end{proof}

\begin{rem}\label{rem: Choice of r 0 from disks push up claim}
    Claim \ref{lem: Second choice of r 0} is a further choice of $r_{0} = r_{0} (z_{0}, M, N, g, \delta, W, \lambda, L, k) > 0$.
\end{rem}

Therefore, there exists an $r_{0}$, such that for all $r$ in $[0, r_{0}]$, 
\begin{eqnarray*}
    \mathcal{F}_{\varepsilon, \lambda} (v_{\varepsilon}^{r, g_{4}}) - \mathcal{F}_{\varepsilon, \lambda} (v_{\varepsilon}^{0, g_{4}}) &=& \rom{1}_{\varepsilon}^{r, g_{4}} + \rom{2}_{\varepsilon}^{r, g_{4}}, \\
    &\leq& \rom{3}_{\varepsilon}^{5,r},
\end{eqnarray*}
where
\begin{eqnarray*}
    \rom{3}_{\varepsilon}^{5,r} &\leq& C_{5} \bigg( p_{\varepsilon}^{1} (r) -  p_{\varepsilon}^{2} (0) + \mathcal{H}^{n} (\{ x \in B_{2 L} \colon \, \sigma^{-} (x) \geq - 2 \varepsilon \Lambda \}) + \varepsilon \Lambda \\ 
    && \hspace{2cm} + \lambda \int_{0}^{r} \int_{\tilde{M}} \Theta_{\varepsilon, g_{4}}^{1} (s,x) - \Theta_{\varepsilon, g_{4}}^{2} (s,x) \, d \mathcal{H}^{n} (x) \, ds \bigg),
\end{eqnarray*}
where we are potentially rechoosing $C_{5} = C_{5}(z_{0}, M, N, g, k, L, \delta, r_{0}, W, \lambda)$.

\bigskip 

As $g_{4} (0, x) = g_{3} (\rho, x)$, we have, for $r$ in $[0, r_{0}]$, 
\begin{equation*}
    \mathcal{F}_{\varepsilon, \lambda} (v_{\varepsilon}^{r, g_{4}}) \leq \mathcal{F}_{\varepsilon, \lambda} (v_{\varepsilon}) - \frac{\sigma \mathcal{H}^{n} (x)}{2(2^{n} - 1)} + \rom{3}_{\varepsilon}^{1, 0} + \rom{3}_{\varepsilon}^{4, r_{0}} + \rom{3}_{\varepsilon}^{5, r}.
\end{equation*}

\bigskip 

Consider the following function on $N$, for $r$ in $[0, r_{0}]$, 
\begin{equation*}
    \tilde{v}_{\varepsilon}^{r,g_{4}} (y) = \begin{cases}
        \overHe_{\varepsilon} (\tilde{d}(y) - r_{0}), & y \not\in B_{\delta}^{N} (z_{0}), \\
        v_{\varepsilon}^{r,g_{4}} (F_{1}^{-1} (y)), & y \in \overline{\Upsilon_{1}} \cap B_{\delta}^{N} (z_{0}), \\
        v_{\varepsilon}^{r, g_{4}} (F_{2}^{-1} (y)), & y \in \overline{\Upsilon_{2}} \cap B_{\delta}^{N} (z_{0}).
    \end{cases}
\end{equation*}
We can show, as in Section \ref{subsec: Function on Manifold Disks to Crab} and \ref{subsec: Continuity of path from disks to crab}, that $\tilde{v}_{\varepsilon}^{r,g_{3}}$ lies in $W^{1,\infty} (N)$, $\mathcal{F}_{\varepsilon, \lambda} (\tilde{v}_{\varepsilon}^{r, g_{4}}) (N) = \mathcal{F}_{\varepsilon, \lambda} (v_{\varepsilon}^{r,g_{4}}) (\tilde{T})$, and that, $r \mapsto \tilde{v}_{\varepsilon}^{r,g_{4}}$ is a continuous path in $W^{1,2} (N)$.

\subsection{Completing Path to $a_{\varepsilon}$}\label{sec: Completing Path to -1}

We construct the path from (5) to '-1' in Figure \ref{fig: The Path}.

\bigskip 

Consider, for $r$ in $[r_{0}, 2 \, \text{diam} (N)]$, 
\begin{equation*}
    g_{5} (r, x) =  r.
\end{equation*}

\bigskip 

By repeating similar arguments to those in Section \ref{subsec: Path From Disks to Crab, calculation}, we have
\begin{equation*}
    \mathcal{F}_{\varepsilon, \lambda} (v_{\varepsilon}^{r, g_{5}}) - \mathcal{F}_{\varepsilon, \lambda} (v_{\varepsilon}^{r_{0}, g_{5}}) \leq \rom{3}_{\varepsilon}^{6,r},
\end{equation*}
where 
\begin{equation*}
    \rom{3}_{\varepsilon}^{6,r} = \lambda \int_{\rho}^{r} \Theta_{\varepsilon, g_{5}}^{1} (s,x) - \Theta_{\varepsilon, g_{5}}^{2} (s,x) \, d \mathcal{H}^{n} (x) \, ds.
\end{equation*}

Recalling that, $g_{5} (r_{0}, x) = g_{4} (r_{0}, x)$, we have, for all $r$ in $[r_{0}, 2 \, \text{diam} (N)]$,
\begin{eqnarray*}
    \mathcal{F}_{\varepsilon, \lambda} (v_{\varepsilon}^{r, g_{5}}) &\leq& \mathcal{F}_{\varepsilon, \lambda} (v_{\varepsilon}^{r_{0}, g_{4}}) + \rom{3}_{\varepsilon}^{6, r}, \\
    &<& \mathcal{F}_{\varepsilon, \lambda} (v_{\varepsilon}) - \frac{\sigma \mathcal{H}^{n}(A_{l})}{2(2^{n} - 1)} + \rom{3}_{\varepsilon}^{1, 0} + \rom{3}_{\varepsilon}^{4, r_{0}} + \rom{3}_{\varepsilon}^{5, r_{0}} + \rom{3}_{\varepsilon}^{6,r}.
\end{eqnarray*}

Define the function, $\tilde{v}_{\varepsilon}^{r, g_{5}}(y) = \overHe_{\varepsilon} (\tilde{d} (y) - r)$, in $N$. 
This function lies in $W^{1, \infty} (N)$, $\mathcal{F}_{\varepsilon, \lambda} (\tilde{v}_{\varepsilon}^{r, g_{5}}) (N) = \mathcal{F}_{\varepsilon, \lambda} (v_{\varepsilon}^{r, g_{5}}) (\tilde{T})$, and $r \mapsto \tilde{v}_{\varepsilon}^{r, g_{5}}$ is a continuous path in $W^{1,2} (N)$.

\bigskip

As $|\tilde{d}(y)| \leq \, \text{diam}(N)$, we have that, 
\begin{equation*}
    \tilde{d} (y) - 2 \, \text{diam} (N) \leq - \, \text{diam} (N) < - 2 \varepsilon \Lambda.
\end{equation*}
Therefore, 
\begin{equation*}
    \tilde{v}_{\varepsilon}^{2 \, \text{diam}(N), g_{5}} (y) = \overHe_{\varepsilon}(\tilde{d}(x) - 2 \, \text{diam} (N)) = - 1.
\end{equation*}

\bigskip 

Recall that our end point is $a_{\varepsilon} > -1$. 
We connect $-1$ to $a_{\varepsilon}$, by constant functions, 
\begin{equation*}
    u_{\varepsilon}^{r} (y) = r
\end{equation*}
for $r$ in $[-1, a_{\varepsilon}]$. 
Then, 
\begin{equation*}
    \mathcal{F}_{\varepsilon, \lambda} (u_{\varepsilon}^{r}) = \int_{N} \frac{W(r)}{\varepsilon} - \sigma \lambda r \, d \mu_{g} \leq \mathcal{F}_{\varepsilon, \lambda} (u_{\varepsilon}^{-1}). 
\end{equation*}
As $u_{\varepsilon}^{- 1} = \tilde{v}_{\varepsilon}^{2 \, \text{diam}(N), g_{5}}$, we have that, for all $r$ in $[- 1, a_{\varepsilon}]$, 
\begin{equation*}
    \mathcal{F}_{\varepsilon, \lambda} (u_{\varepsilon}^{r}) < \mathcal{F}_{\varepsilon, \lambda} (v_{\varepsilon}) - \frac{\sigma \mathcal{H}^{n}(A_{l})}{2(2^{n} - 1)} + \rom{3}_{\varepsilon}^{1,0} + \rom{3}_{\varepsilon}^{4, r_{0}} + \rom{3}_{\varepsilon}^{5, r_{0}} + \rom{3}_{\varepsilon}^{6,2 \, \text{diam}(N)}.
\end{equation*}

\section{Path to $b_{\varepsilon}$}\label{sec: path to plus 1}

\subsection{Lining Up With Level Set $\Gamma_{-\rho}$}\label{sec: Crab Legs Back, Main Step to 1}

We construct the path from (2) to (6) in Figure \ref{fig: The Path}

\bigskip 

We consider, for $r$ in $[0, \rho]$, and $x$ in $\tilde{M}$,
\begin{equation*}
    g_{6} (r, x) = \rho f(x) - r (1 + f(x)).
\end{equation*}

First consider $r$, in $(2 \varepsilon \Lambda, \rho]$, 
\begin{equation*}
    \mathcal{F}_{\varepsilon, \lambda} (v_{\varepsilon}^{r, g_{6}}) - \mathcal{F}_{\varepsilon, \lambda} (v_{\varepsilon}^{0, g_{6}}) = \rom{1}_{\varepsilon}^{r, g_{6}} + (\rom{2}_{\varepsilon}^{r, g_{6}} - \rom{2}_{\varepsilon}^{2 \varepsilon \Lambda, g_{6}}) + \rom{2}_{\varepsilon}^{2 \varepsilon \Lambda, g_{6}}
\end{equation*}

Similar to Section \ref{sec: Crab Legs Forward Away From Body} we have,
\begin{equation*}
    \rom{2}_{\varepsilon}^{r, g_{6}} - \rom{2}_{\varepsilon}^{2 \varepsilon \Lambda, g_{6}} \leq \lambda \int_{2 \varepsilon \Lambda}^{r} \int_{\tilde{M}} \Theta_{\varepsilon, g_{6}}^{1} (s,x) - \Theta_{\varepsilon, g_{6}}^{2} (s,x) \, d \mathcal{H}^{n} (x) \, ds. 
\end{equation*}

For $r$ in $[0, 2 \varepsilon \Lambda]$, again by similar arguments to those in Section \ref{sec: Crab Legs Forward Away From Body}
\begin{eqnarray*}
    \rom{2}_{\varepsilon}^{r, g_{6}} &\leq& C_{2} \bigg( \int_{0}^{r} \int_{\{\rho f < - 2 \varepsilon \Lambda\}} \Theta_{\varepsilon, g_{6}}^{1} (s,x) - \Theta_{\varepsilon, g_{6}}^{2} (s,x) \, d \mathcal{H}^{n} (x) \, ds \\
    && + \int_{\{\rho f \geq - 2 \varepsilon \Lambda\}} m_{\varepsilon}^{2} (x) \, d \mathcal{H}^{n} (x) + \varepsilon \Lambda \bigg),
\end{eqnarray*}
where, 
\begin{equation*}
    m_{\varepsilon}^{2} (x) \coloneqq \max_{t \in [- 6 \varepsilon \Lambda. 2 \varepsilon \Lambda]} \theta_{t} (x) - \min_{t \in [-6 \varepsilon \Lambda, 2 \varepsilon \Lambda]} \theta_{t} (x),
\end{equation*}
and we are potentially rechoosing $C_{2}(M, N, g, W, \lambda)$.

\bigskip 

For $r$ in $[0,\rho]$, we consider, 
\begin{eqnarray*}
    \rom{1}_{\varepsilon}^{r, g_{6}} &=& \int_{A_{l}} \int_{\sigma^{-}(x)}^{\sigma^{+}(x)} \frac{\varepsilon}{2} ((\overHe_{\varepsilon})' (t - g_{4} (r,x)))^{2} (\rho - r)^{2} |\nabla f (x)|_{(x,t)}^{2} \theta_{t}(x) \, dt \, d \mathcal{H}^{n}(x) \\ 
    && - \int_{A_{l}} \int_{\sigma^{-}(x)}^{\sigma^{+}(x)} \frac{\varepsilon}{2} ((\overHe_{\varepsilon})' (t - g_{4} (0,x)))^{2} \rho^{2} |\nabla f (x)|_{(x,t)}^{2} \theta_{t}(x) \, dt \, d \mathcal{H}^{n}(x)
\end{eqnarray*}
Following similar arguments to Section \ref{subsec: Path From Disks to Crab, calculation}, and after potentially rechoosing $C_{3} = C_{3} (z_{0}, M, N, g, \delta, W, \lambda)$, we have that,
\begin{equation*}
    \rom{1}_{\varepsilon}^{r, g_{6}} \leq C_{3} \mathcal{H}^{n} (A_{l}) \frac{\rho^{2}}{l^{2}}.
\end{equation*}
Therefore, recalling our choice of $\rho > 0$ in Remark \ref{rem: choice of rho for path from disks to crab}, we have
\begin{equation*}
    \rom{1}_{\varepsilon}^{r, g_{6}} \leq \frac{\sigma \mathcal{H}^{n} (A_{l})}{2(2^{n} - 1)}.
\end{equation*}

Thus, for $r$ in $[0, 2 \varepsilon \Lambda]$, 
\begin{eqnarray*}
    \mathcal{F}_{\varepsilon, \lambda}(v_{\varepsilon}^{r, g_{6}}) - \mathcal{F}_{\varepsilon}(v_{\varepsilon}^{0, g_{6}}) &<& \frac{\sigma \mathcal{H}^{n} (A_{l})}{2(2^{n} - 1)} + \rom{3}_{\varepsilon}^{7,r},
\end{eqnarray*}
where, 
\begin{eqnarray*}
    \rom{3}_{\varepsilon}^{7,r} &=& C_{2} \bigg( \int_{0}^{r} \int_{\{\rho f < - 2 \varepsilon \Lambda\}} \Theta_{\varepsilon, g_{6}}^{1} (s,x) - \Theta_{\varepsilon, g_{6}}^{2} (s,x) \, d \mathcal{H}^{n} (x) \, ds \\
    && + \int_{\{\rho f \geq - 2 \varepsilon \Lambda\}} m_{\varepsilon}^{2} (x) \, d \mathcal{H}^{n} (x) + \varepsilon \Lambda \bigg).
\end{eqnarray*}
For $r$ in $(2 \varepsilon \Lambda, \rho]$, 
\begin{eqnarray*}
    \mathcal{F}_{\varepsilon, \lambda}(v_{\varepsilon}^{r, g_{6}}) - \mathcal{F}_{\varepsilon}(v_{\varepsilon}^{0, g_{6}}) &<& \frac{\sigma \mathcal{H}^{n} (A_{l})}{2(2^{n} - 1)} + \rom{3}_{\varepsilon}^{8, r},
\end{eqnarray*}
where
\begin{eqnarray*}
    \rom{3}_{\varepsilon}^{8,r} &=& C_{2} \bigg( \int_{0}^{2 \varepsilon \Lambda} \int_{\{\rho f < - 2 \varepsilon \Lambda\}} \Theta_{\varepsilon, g_{6}}^{1} (s,x) - \Theta_{\varepsilon, g_{6}}^{2} (s,x) \, d \mathcal{H}^{n} (x) \, ds \\
    && + \int_{2 \varepsilon \Lambda}^{r} \int_{\tilde{M}} \Theta_{\varepsilon, g_{6}}^{1} (s, x) - \Theta_{\varepsilon, g_{6}}^{2} (s, x) \, d \mathcal{H}^{n} (x) \\
    && + \int_{\{\rho f \geq - 2 \varepsilon \Lambda\}} m_{\varepsilon}^{2} (x) \, d \mathcal{H}^{n} (x) + \varepsilon \Lambda \bigg).
\end{eqnarray*}

\bigskip

As $g_{6} (0, x) = g_{1} (\rho, x)$, we have, for $r$ in $[0, 2 \varepsilon \Lambda]$, 
\begin{equation*}
    \mathcal{F}_{\varepsilon, \lambda} (v_{\varepsilon}^{r, g_{6}}) \leq \mathcal{F}_{\varepsilon, \lambda} (v_{\varepsilon}) - \frac{\sigma \mathcal{H}^{n} (A_{l})}{2(2^{n} - 1)} + \rom{3}_{\varepsilon}^{2, \rho} + \rom{3}_{\varepsilon}^{7, r},
\end{equation*}
and for $r$ in $(2 \varepsilon \Lambda, \rho]$, we have, 
\begin{equation*}
    \mathcal{F}_{\varepsilon, \lambda} (v_{\varepsilon}^{r, g_{6}}) \leq \mathcal{F}_{\varepsilon, \lambda} (v_{\varepsilon}) - \frac{\sigma \mathcal{H}^{n} (A_{l})}{2(2^{n} - 1)} + \rom{3}_{\varepsilon}^{2, \rho} + \rom{3}_{\varepsilon}^{8, r}.
\end{equation*}

\bigskip 

For $r$ in $[0, \rho]$, we define the following function on $N$,
\begin{equation*}
    \tilde{v}_{\varepsilon}^{r,g_{6}} (y) = \begin{cases}
        \overHe_{\varepsilon} (\tilde{d}(y) + r), & y \not\in B_{\delta}^{N} (z_{0}), \\
        v_{\varepsilon}^{r,g_{6}} (F_{1}^{-1} (y)), & y \in \overline{\Upsilon_{1}} \cap B_{\delta}^{N} (z_{0}), \\
        v_{\varepsilon}^{r, g_{6}} (F_{2}^{-1} (y)), & y \in \overline{\Upsilon_{2}} \cap B_{\delta}^{N} (z_{0}).
    \end{cases}
\end{equation*}
We can show, as in Section \ref{subsec: Function on Manifold Disks to Crab} and \ref{subsec: Continuity of path from disks to crab} that, $\tilde{v}_{\varepsilon}^{r,g_{6}}$ lies in $W^{1,\infty} (N)$, $\mathcal{F}_{\varepsilon, \lambda} (\tilde{v}_{\varepsilon}^{r, g_{6}}) (N) = \mathcal{F}_{\varepsilon, \lambda} (v_{\varepsilon}^{r,g_{6}}) (\tilde{T})$, and the path $r \mapsto \tilde{v}_{\varepsilon}^{r,g_{6}}$ is continuous in $W^{1,2} (N)$.

\subsection{Completing Path to $b_{\varepsilon}$}\label{sec: Completing Path to +1}

We construct the path from (6) to '+1' in Figure \ref{fig: The Path}.
This is done in an identical way to Section \ref{sec: Completing Path to -1}.

\bigskip 

For $r$ in $[\rho, 2 \, \text{diam}(N)]$, we define the following function on $N$, 
\begin{equation*}
    \tilde{v}_{\varepsilon}^{r,g_{7}} (y) \coloneqq \overHe_{\varepsilon} (\tilde{d}(y) + r).
\end{equation*}
Similar to arguments in Section \ref{sec: Completing Path to -1} we have, 
\begin{equation*}
    \mathcal{F}_{\varepsilon, \lambda} (\tilde{v}_{\varepsilon}^{r, g_{7}}) \leq \mathcal{F}_{\varepsilon, \lambda} (v_{\varepsilon}) - \frac{\sigma \mathcal{H}^{n} (A_{l})}{2(2^{n} - 1)} + \rom{3}_{\varepsilon}^{2, \rho} + \rom{3}_{\varepsilon}^{8, \rho} + \rom{3}_{\varepsilon}^{9,r},
\end{equation*}
where, 
\begin{equation*}
    \rom{3}_{\varepsilon}^{9,r} = \lambda \int_{\rho}^{r} \Theta_{\varepsilon, g_{6}}^{1} (s,x) - \Theta_{\varepsilon, g_{6}}^{2} (s,x) \, d \mathcal{H}^{n} (x) \, ds.
\end{equation*}

\bigskip 

Again as in Section \ref{sec: Completing Path to -1}, we connect $\tilde{v}_{\varepsilon}^{2 \, \text{diam}(N), g_{7}} = 1$, to $b_{\varepsilon}$, by constant functions, $u_{\varepsilon}^{r} = r$, for $r$ in $[1, b_{\varepsilon}]$. 
We have that for all $r$ in $[1, b_{\varepsilon}]$, 
\begin{equation*}
    \mathcal{F}_{\varepsilon, \lambda} (u_{\varepsilon}^{r}) \leq \mathcal{F}_{\varepsilon, \lambda} (\tilde{v}_{\varepsilon}^{2 \, \text{diam}(N), g_{7}}) \leq \mathcal{F}_{\varepsilon, \lambda} (v_{\varepsilon}) - \frac{\sigma \mathcal{H}^{n} (A_{l})}{2(2^{n} - 1)} + \rom{3}_{\varepsilon}^{2, \rho} + \rom{3}_{\varepsilon}^{8, \rho} + \rom{3}_{\varepsilon}^{9,2 \, \text{diam} (N)}.
\end{equation*}

\bigskip 

Both $\tilde{v}_{\varepsilon}^{r, g_{7}}$, and $u_{\varepsilon}^{r}$ give continuous paths in $W^{1,2} (N)$ with respect to $r$.

\section{Conclusion of the Paths}

\subsection{Error Terms}\label{sec: Error Terms}

\subsubsection{Theta Error Terms}\label{subsec: theta error terms}

Consider a function $g \colon \mathbb{R} \times \tilde{M} \rightarrow \mathbb{R}$, and the term  
\begin{eqnarray*}
    \Theta_{\varepsilon, g}^{1} (s, x) - \Theta_{\varepsilon, g}^{2} (s,x) &=& \sigma \int_{\sigma^{-}(x)}^{\sigma^{+}(x)} \partial_{s} g (s,x) (\overHe_{\varepsilon})' (t - g(s,x)) \theta_{t}(x) \, dt \\ 
    && \hspace{1cm} - \int_{\sigma^{-}(x)}^{\sigma^{+}(x)} \partial_{s} g(s,x) Q_{\varepsilon} (t - g(s,x)) \theta_{t} (x) \, dt.
\end{eqnarray*}
Assuming that $g$ is monotone in the first variable, we have,
\begin{eqnarray*}
    |\Theta_{\varepsilon, g}^{1} (s, x) - \Theta_{\varepsilon, g}^{2} (s,x)| \leq 2 \sigma |\partial_{s} g(s,x)| m_{\varepsilon}(g(s,x), x) + C_{7} \varepsilon^{2}.
\end{eqnarray*}
where, 
\begin{equation*}
    m_{\varepsilon} (T, x) = \max_{t \in [T - 2 \varepsilon \Lambda, T + 2 \varepsilon \Lambda]} \theta_{t} (x) - \min_{t \in [T - 2 \varepsilon \Lambda, T + 2 \varepsilon \Lambda]} \theta_{t} (x).
\end{equation*}
and $C_{7} = C_{7} (N, m, \lambda, W, |g|_{C^{1}}) < + \infty$.

\bigskip 

Now we assume that $\partial_{s} g \geq 0$, and $|\partial_{s} g|_{C^{0} (\mathbb{R} \times \tilde{M})} < + \infty$. 
Apply Fubini's Theorem to swap integrals,
\begin{eqnarray*}
    \int_{0}^{r} \int_{\tilde{M}} \left| \Theta_{\varepsilon, g}^{1} (s, x) - \Theta_{\varepsilon, g}^{2} (s,x) \right| \, d \mathcal{H}^{n} (x) \, ds \leq 2 \sigma \int_{\tilde{M}} \int_{g(0,x)}^{g(r,x)} m_{\varepsilon} (T, x) \, dT \, \mathcal{H}^{n} (x) + C_{7} r \varepsilon^{2}.
\end{eqnarray*}
Fixing $x$ in $\tilde{M}$, we see that for all $T$ in $\mathbb{R} \setminus \{ \sigma^{-}(x), \sigma^{+}(x) \}$, 
\begin{equation*}
    m_{\varepsilon} (T, x) \rightarrow 0, \, as \, \varepsilon \rightarrow 0,
\end{equation*}
and furthermore, we have the following bounds, $0 \leq m_{\varepsilon} (T, x) \leq e^{\frac{\lambda^{2}}{2m}}$.
Therefore, we can apply Dominated Convergence Theorem for fixed $x$ in $\tilde{M}$ and $r$ in $[0, \infty)$, 
\begin{equation*}
    \int_{g(0,x)}^{g(r,x)} m_{\varepsilon} (T, x) \, dT \rightarrow 0, \, as \, \varepsilon \rightarrow 0.
\end{equation*}
Furthermore, as $0 \leq g(r,x) - g(0, x) \leq |\partial_{s} g|_{C^{0} (\mathbb{R} \times \tilde{M})} \, r$, we have the bounds, 
\begin{equation*}
    0 \leq \int_{g(0,x)}^{g(r,x)} m_{\varepsilon} (T, x) \, dT \leq |\partial_{s} g|_{C^{0} (\mathbb{R} \times \tilde{M})} r e^{\frac{\lambda^{2}}{2m}}. 
\end{equation*}
Therefore, again by Dominated Convergence Theorem, we have, for fixed $r$ in $[0, \infty)$
\begin{equation*}
    \int_{\tilde{M}} \int_{g(0,x)}^{g(r,x)} m_{\varepsilon} (T, x) \, dT \, \mathcal{H}^{n} (x) \rightarrow 0, \, as \, \varepsilon \rightarrow 0.
\end{equation*}

Define the following continuous function on $[0, +\infty)$,
\begin{equation*}
    M_{\varepsilon}^{g} (r) = \int_{\tilde{M}} \int_{g(0,x)}^{g(r,x)} m_{\varepsilon} (T, x) \, dT \, \mathcal{H}^{n} (x).
\end{equation*}
We have that $M_{\varepsilon}^{g} (r) \rightarrow 0$, pointwise, as $\varepsilon \rightarrow 0$, and furthermore, as 
\begin{equation*}
    0 \leq m_{\varepsilon_{1}} (T,x) \leq m_{\varepsilon_{2}} (T,x),
\end{equation*} 
for all $T$ in $\mathbb{R}$, $x$ in $\tilde{M}$, and $0 < \varepsilon_{1} < \varepsilon_{2}$, this implies that, 
\begin{equation*}
    0 \leq M_{\varepsilon_{1}}^{g} (r) \leq M_{\varepsilon_{2}}^{g}(r),
\end{equation*}
for all $r$ in $[0, + \infty)$. 
Therefore, by Dini's Theorem, we have that, 
\begin{equation*}
    M_{\varepsilon}^{g} \rightarrow 0, \, as \, \varepsilon \rightarrow 0,
\end{equation*}
uniformly on compact sets of $[0, + \infty)$.
Thus,
\begin{equation}\label{eqn: Theta error converging to zero}
    \int_{0}^{r} \int_{\tilde{M}} \left| \Theta_{\varepsilon, g}^{1} (s, x) - \Theta_{\varepsilon, g}^{2} (s,x) \right| \, d \mathcal{H}^{n} (x) \, ds \rightarrow 0
\end{equation}
as $\varepsilon \rightarrow 0$, uniformly in $r$, on compact sets of $[0, + \infty)$.
The same holds assuming that $g$ satisfies $\partial_{s} g \leq 0$, on $\mathbb{R} \times \tilde{M}$, and $|\partial_{s} g|_{C^{0} (\mathbb{R} \times \tilde{M})} < + \infty$.

\bigskip

For $i = 1, \ldots, 7$ our $g_{i}$'s are monotone in the first variable and $|\partial_{s} g_{i}|_{C^{0} (\mathbb{R} \times \tilde{M})} < + \infty$.
Therefore, (\ref{eqn: Theta error converging to zero}) holds for each $i$.

\subsubsection{The Other Error Terms}

We first consider, 
\begin{equation*}
    \int_{B_{2l}} q_{\varepsilon}^{1} (x) \, d \mathcal{H}^{n} (x),
\end{equation*}
with, 
\begin{equation*}
    q_{\varepsilon}^{1} (x) = \max_{t \in [-4 \varepsilon \Lambda, 2 \varepsilon \Lambda]} (H_{t} (x) - \lambda) \theta_{t} (x).
\end{equation*}
By choice of $\varepsilon > 0$, in Remark \ref{rem: choice in epsilon 1}, $2 \varepsilon \Lambda < < \rho$. 
Therefore, by choice of $\rho > 0$, in Remark \ref{rem: choice of rho so that B l times -rho rho fits into B delta}, and $\delta > 0$, from Remark \ref{rem: choice of delta for upper and lower mean curvature bounds in ball about z 0}, we have 
\begin{equation*}
    0 \leq \max_{x \in B_{2 l}} q_{\varepsilon}^{1} (x) \leq \frac{\lambda}{2} e^{\frac{\lambda^{2}}{2 m}}.
\end{equation*}
Fixing $x'$ in $B_{2 l} \setminus \{x \colon \sigma^{-} (x) = 0 \}$, we see that there exists an $\varepsilon' = \varepsilon' (x') > 0$, such that for all $0 < \varepsilon \leq \varepsilon'$, 
\begin{equation*}
    [-4 \varepsilon \Lambda, 2 \varepsilon \Lambda] \subset (\sigma^{-} (x'), \sigma^{+}(x')).
\end{equation*}
Therefore, $(H_{t} (x') - \lambda) \theta_{t} (x')$, is a smooth function in $t$ on $[- 4 \varepsilon \Lambda, 2 \varepsilon \Lambda]$, and clearly, 
\begin{equation*}
    \max_{t \in [-4 \varepsilon \Lambda, 2 \varepsilon \Lambda]} (H_{t} (x') - \lambda) \theta_{t} (x') \rightarrow 0, \, as \, \varepsilon \rightarrow 0.
\end{equation*}
Thus, $q_{\varepsilon}^{1} \rightarrow 0$, $\mathcal{H}^{n}$--a.e in $B_{2l}$, and we can apply Dominated Convergence Theorem to say that 
\begin{equation*}
    \int_{B_{2 l}} q_{\varepsilon}^{1} (x) \, d \mathcal{H}^{n} (x) \rightarrow 0, \, as \, \varepsilon \rightarrow 0.
\end{equation*}
Identically we also have, 
\begin{equation*}
    \int_{A_{L}^{k}} q_{\varepsilon}^{2} (x) \, d \mathcal{H}^{n} (x) \rightarrow 0, \, as \, \varepsilon \rightarrow 0,
\end{equation*}
recalling $q_{\varepsilon}^{2} (x) = \max_{t \in [-2 \varepsilon \Lambda, 4 \varepsilon \Lambda]} (\lambda - H_{t} (x)) \theta_{t} (x)$.

\bigskip 

Now considering 
\begin{equation*}
    q_{\varepsilon}^{3} (r,x) = \max_{t \in [-2 \varepsilon \Lambda, 2 \varepsilon \Lambda]} \zeta(x, t + g_{4} (r,x)) - \zeta (x, g_{4} (r,x))
\end{equation*}
where we are recalling the function
\begin{equation*}
    \zeta (x,t) = |\nabla \tilde{f}(x)|_{(x,t)}^{2} \theta_{t} (x)
\end{equation*}
from Claim \ref{lem: Second choice of r 0}. 
For $x$ in $B_{2 L}$ such that $\tilde{f} (x) = 0$, we have that $\zeta (x,t) = 0$, for all $t$. 
Considering $x$ in $A_{L}^{k} \cap \{ \tilde{f} \not= 0\}$, such that $r_{0} \tilde{f} (x) > 2 \varepsilon \Lambda$, then
\begin{equation*}
    [-2 \varepsilon \Lambda + g_{4} (r,x), 2 \varepsilon \Lambda + g_{4} (r,x)] \subset (0, 2 r_{0}) \subset (\sigma^{-} (x), \sigma^{+} (x)).
\end{equation*}
Thus, for $t$ in $[-2 \varepsilon \Lambda + g_{4} (r,x), 2 \varepsilon \Lambda + g_{4} (r,x)]$, 
\begin{equation*}
    (x,t) \in \tilde{T} \cap ( B_{L} \times (-2 r_{0}, 2 r_{0}) ) \subset \subset \tilde{V}_{1} \cup \tilde{V}_{2}, 
\end{equation*}
where we are recalling sets $\tilde{V}_{1}$ and $\tilde{V}_{2}$ from Remark \ref{rem: diffeos F i}. 
Therefore, $\zeta$ is differentiable at $(x,t)$ and 
\begin{equation*}
    |\partial_{t} \zeta (x,t)| \leq C(|F_{i}|_{C^{2}(B_{L} \times (-2 r_{0}, 2 r_{0}) )}, |\tilde{f}|_{C^{1}}, \lambda) \leq C_{6}.
\end{equation*}
Where we are potentially rechoosing $C_{6} = C_{6} (z_{0}, M, N, g, k, L, \delta, W, \lambda)$.
Therefore, for $x$ in $A_{L}^{k}$ such that $r_{0} \tilde{f} (x) > 2 \varepsilon \Lambda$, we have that $0 \leq q_{\varepsilon}^{3} (r,x) \leq C_{6} \varepsilon \Lambda$. 
Furthermore, for all $x$ in $A_{L}^{k}$, 
\begin{equation*}
    |q_{\varepsilon}^{3} (r,x)| \leq \max_{(x,t) \in B_{L} \times (-2 r_{0}, 2 r_{0})} \zeta (x, t) \leq C_{6}.
\end{equation*}
Again we are potentially rechoosing $C_{6} = C_{6} (z_{0}, M, N, g, k, L, \delta, W, \lambda)$.

\bigskip 

Therefore, 
\begin{eqnarray*}
    p_{\varepsilon}^{1} (r) &=& \int_{A_{L}^{k} \cap \{ r_{0} \tilde{f} > 2 \varepsilon \Lambda\}}  q_{\varepsilon}^{3} (r,x) \, d \mathcal{H}^{n} (x) + \int_{A_{L}^{k} \cap \{0 < r_{0} \tilde{f} \leq 2 \varepsilon \Lambda\}}  q_{\varepsilon}^{3} (r,x) \, d \mathcal{H}^{n} (x), \\
    &\leq& C_{6} \left( \varepsilon \Lambda + \mathcal{H}^{n} (\{ x \in A_{L}^{k} \colon 0 < r_{0} \tilde{f} (x) \leq 2 \varepsilon \Lambda\}) \right).
\end{eqnarray*}
Thus,
\begin{equation*}
    \max_{r \in [0, r_{0}]} p_{\varepsilon}^{1} (r) \rightarrow 0
\end{equation*}
as $\varepsilon \rightarrow 0$.
Similarly, $p_{\varepsilon}^{2} (0) \rightarrow 0$, as $\varepsilon \rightarrow 0$.

\bigskip 

For the remaining error terms, as $\mathcal{H}^{n} (\{ x \in \tilde{M} \colon \, \sigma^{-} (x) = 0\}) = 0$, by Dominated Convergence Theorem, we have that, 
\begin{equation*}
    \mathcal{H}^{n} (\{ x \in \tilde{M} \colon \, \sigma^{-}(x) \geq - 2 \varepsilon \Lambda \}) \rightarrow 0,
\end{equation*}
and 
\begin{equation*}
    \int_{\tilde{M}} m_{\varepsilon}^{i} (x) \, d \mathcal{H}^{n} (x) \rightarrow 0,
\end{equation*}
where, 
\begin{eqnarray*}
    m_{\varepsilon}^{1} (x) &=& \max_{t \in [-2 \varepsilon \Lambda, 4 \varepsilon \Lambda]} \theta_{t} (x) - \min_{t \in [-2 \varepsilon \Lambda, 4 \varepsilon \Lambda]} \theta_{t} (x), \\
    m_{\varepsilon}^{2} (x) &=& \max_{t \in [-6 \varepsilon \Lambda, 2 \varepsilon \Lambda]} \theta_{t} (x) - \min_{t \in [-6 \varepsilon \Lambda, 2 \varepsilon \Lambda]} \theta_{t} (x).
\end{eqnarray*}

\subsection{Path for Theorem \ref{thm: Allen-Cahn minmax limit}}\label{subsec: Recovery Path}

Consider the following continuous path in $W^{1,2} (N)$, for $\varepsilon > 0$,
\begin{equation*}
    \gamma_{\varepsilon} (t) = \begin{cases}
        -1 - 2 \text{diam} \, (N) - t, & t \in [- 2 \, \text{diam} \, (N) - a_{\varepsilon} - 1, 2 \, \text{diam} \, (N)], \\
        \overHe_{\varepsilon} (\tilde{d} - t), & t \in [-2 \, \text{diam} \, (N), 2 \, \text{diam} \, (N)], \\
        1 - 2 \, \text{diam} \, (N) + t, & t \in [2 \, \text{diam} \, (N), 2 \, \text{diam} \, (N) + b_{\varepsilon} - 1].
    \end{cases}
\end{equation*}
which satisfies $\gamma_{\varepsilon} (-1 - 2 \text{diam} \, (N) - a_{\varepsilon}) = a_{\varepsilon}$, and $\gamma_{\varepsilon} (1 - 2 \, \text{diam} \, (N) + b_{\varepsilon}) = b_{\varepsilon}$.

\bigskip 

Replacing $r_{0} = 2 \varepsilon \Lambda$, in Section \ref{sec: Completing Path to -1}, and $\rho = 2 \varepsilon \Lambda$, in Section \ref{sec: Completing Path to +1}, we see that, for all $\varepsilon$ in $(0, \tilde{\varepsilon})$, for some $\tilde{\varepsilon} = \tilde{\varepsilon} (N, M, g, \lambda, W) > 0$, fixed,
\begin{equation*}
\begin{cases}
    \mathcal{F}_{\varepsilon, \lambda} (\gamma_{\varepsilon} (t)) < \mathcal{F}_{\varepsilon, \lambda} (v_{\varepsilon}) + \rom{3}_{\varepsilon}^{6,2 \, \text{diam} \, (N)}, & t \in [- 2 \, \text{diam} \, (N) - a_{\varepsilon} - 1, 2 \, \text{diam} \, (N)], \\
    \mathcal{F}_{\varepsilon, \lambda} (\gamma_{\varepsilon} (t)) < \mathcal{F}_{\varepsilon, \lambda} (v_{\varepsilon}) + \rom{3}_{\varepsilon}^{6, - t}, &  t \in [- 2 \text{diam} \, (N), - 2 \varepsilon \Lambda], \\
    \mathcal{F}_{\varepsilon, \lambda} (\gamma_{\varepsilon} (t)) < \mathcal{F}_{\varepsilon, \lambda} (v_{\varepsilon}) + \rom{3}_{\varepsilon}^{9,t}, & t \in [2 \varepsilon \Lambda, 2 \text{diam} \, (N)], \\
    \mathcal{F}_{\varepsilon, \lambda} (\gamma_{\varepsilon} (t)) < \mathcal{F}_{\varepsilon, \lambda} (v_{\varepsilon}) + \rom{3}_{\varepsilon}^{9,2 \, \text{diam} \, (N)}, & t \in [2 \, \text{diam} \, (N), 2 \, \text{diam} \, (N) + b_{\varepsilon} - 1].
\end{cases}
\end{equation*}
Recalling from Section \ref{subsec: Approximating function for CMC}
\begin{equation*}
    \mathcal{F}_{\varepsilon, \lambda} (v_{\varepsilon}) \rightarrow 2 \sigma \mathcal{H}^{n} (M) - \sigma \lambda \mu_{g} (E) + \sigma \lambda \mu_{g} (N \setminus E),
\end{equation*}
as $\varepsilon \rightarrow 0$, and Section \ref{subsec: theta error terms}, 
\begin{equation*}
    \max_{t \in [2 \varepsilon \Lambda, 2 \, \text{diam} \, (N)]} \left( \rom{3}_{\varepsilon}^{6, t} + \rom{3}^{9, t} \right) \rightarrow 0,
\end{equation*}
as $\varepsilon \rightarrow 0$.
Therefore, for $\tau > 0$, there exists a $ 0 < \varepsilon_{\tau} = \varepsilon_{\tau} (N, M, g, \lambda, W) \leq \tilde{\varepsilon}$, such that for all $\varepsilon$ in $(0, \varepsilon_{\tau})$ and $t$ in $[- 2 \, \text{diam} \, (N) - a_{\varepsilon} - 1, 2 \, \text{diam} \, (N) + b_{\varepsilon} - 1] \setminus (-2 \varepsilon \Lambda, 2 \varepsilon \Lambda)$, 
\begin{equation*}
    \mathcal{F}_{\varepsilon, \lambda} (\gamma_{\varepsilon} (t) ) < 2 \sigma \mathcal{H}^{n} (M) - \sigma \lambda \mu_{g} (E) + \sigma \lambda \mu_{g} (N \setminus E) + \tau.
\end{equation*}
Furthermore, by similar arguments to those in Section \ref{subsec: Approximating function for CMC}, and after potentially rechoosing $\varepsilon_{\tau} > 0$, we have that for all $\varepsilon$ in $(0, \varepsilon_{\tau})$
\begin{equation*}
    \max_{t \in [-2 \varepsilon \Lambda, 2 \varepsilon \Lambda]} \mathcal{F}_{\varepsilon, \lambda} (\gamma_{\varepsilon} (t)) < 2 \sigma \mathcal{H}^{n} (M) - \sigma \lambda \mu_{g} (E) + \sigma \lambda \mu_{g} (N \setminus E) + \tau.
\end{equation*} 
Therefore, this is an admissible path in $W^{1,2} (N)$, that proves that for the limiting varifold $V = V_{\lambda} + V_{0}$, we must have that $V_{0} = 0$.
This completes the proof of Theorem \ref{thm: Allen-Cahn minmax limit}.

\begin{rem}\label{rem: suitable properties for path}
Note that we can build the path $\gamma_{\varepsilon}$, for any suitable Caccioppoli set $E$. 
The suitable properties are the following:
\begin{enumerate}
    \item $\partial^{*} E \not= \emptyset$, has a quasi embedded $\lambda$-CMC structure, with respect to unit normal pointing into $E$. 
    \item $\partial^{*} E$ satisfies the Geodesic Touching Lemma (Lemma \ref{lem: end points of length minimising geodesics to M are smooth points}). 
\end{enumerate}
\end{rem}

\bigskip 

From Remark \ref{rem: suitable properties for path} we can deduce that $E$ must be a single, connected component and minimises the value
\begin{equation*}
    F_{\lambda} (E) = \mathcal{H}^{n} (\partial^{*} E) - \lambda \mu_{g} (E) > 0,
\end{equation*}
among all suitable competitors.

\subsection{Contradiction Path for Theorem \ref{thm: Main Theorem}}\label{sec: Pieceing Path Together}

Recall all the error terms from Sections \ref{sec: Path From Disks to Crab}, \ref{sec: path to minus 1} and \ref{sec: path to plus 1}.
By Section \ref{subsec: Approximating function for CMC} and \ref{sec: Error Terms}, for $\tau > 0$, there exists an $\varepsilon_{\tau} = \varepsilon (z_{0}, M, N, g, \delta, W, \lambda, L, k, r_{0}, \rho, \tau) \in (0, \varepsilon_{3})$, such that for all $\varepsilon$ in $(0, \varepsilon_{\tau})$, we have that
\begin{eqnarray*}
    && \mathcal{F}_{\varepsilon, \lambda} (v_{\varepsilon}) + \max_{r \in [0, 4 \varepsilon \Lambda)} \rom{3}_{\varepsilon}^{1,r} 
    + \max_{r \in [4 \varepsilon \Lambda, \rho]} \rom{3}_{\varepsilon}^{2,r} 
    + \max_{r \in [0, 2 \varepsilon \Lambda]} \rom{3}_{\varepsilon}^{3, r} \\
    && \hspace{1cm} + \max_{r \in (2 \varepsilon \Lambda, r_{0}]} \rom{3}_{\varepsilon}^{4,r} 
    + \max_{r \in [0, r_{0}]} \rom{3}_{\varepsilon}^{5,r} 
    + \max_{r \in [r_{0}, 2 \, \text{diam} \, (N)]} \rom{3}_{\varepsilon}^{6,r} \\
    && \hspace{2cm} + \max_{r \in [0, 2 \varepsilon \Lambda]} \rom{3}_{\varepsilon}^{7,r} + \max_{r \in (2 \varepsilon \Lambda, \rho]} \rom{3}_{\varepsilon}^{8,r} + \max_{r \in [\rho, 2 \, \text{diam} \, (N)]} \rom{3}_{\varepsilon}^{9,r} \\
    && \hspace{3cm} < 2 \sigma \mathcal{H}^{n} (M) - \sigma \lambda \mu_{g} (E) + \sigma \lambda \mu_{g} (N \setminus E) + \tau.
\end{eqnarray*}

\bigskip 

Therefore, for any $\tau > 0$, there exists an $\varepsilon_{\tau} > 0$, such that for any $\varepsilon$ in $(0, \varepsilon_{\tau})$, we can define the continuous path, 
\begin{equation*}
    \gamma_{\varepsilon} \colon [-1 - a_{\varepsilon}, 4 \, \text{diam}\, (N) + r_{0} + \rho + b_{\varepsilon} - 1] \rightarrow W^{1,2} (N),
\end{equation*}
by
\begin{equation*}
    \gamma_{\varepsilon} (t) = \begin{cases}
        - 1 - t, & t \in [-1 - a_{\varepsilon}, 0], \\
        \overHe_{\varepsilon} (\tilde{d} + t - 2 \, \text{diam} \, (N)), & [0, 2 \, \text{diam} \, (N) - r_{0}], \\
        \tilde{v}_{\varepsilon}^{2 \, \text{diam} \, (N) - t, g_{4}}, & [2 \, \text{diam} \, (N) - r_{0}, 2 \, \text{diam} \, (N)], \\
        \tilde{v}_{\varepsilon}^{2 \, \text{diam} \, (N) + \rho - t , g_{3}}, & [2 \, \text{diam} \, (N), 2 \, \text{diam} \, (N) + \rho], \\
        \tilde{v}_{\varepsilon}^{2 \, \text{diam} \, (N) + \rho +r_{0} - t, g_{2}}, & [2 \, \text{diam} \, (N) + \rho, 2 \, \text{diam} \, (N) + \rho + r_{0}], \\
        \tilde{v}_{\varepsilon}^{t - (2 \, \text{diam} \, (N) + \rho + r_{0}),g_{6}}, & [2 \, \text{diam} \, (N) + \rho + r_{0}, 2 \, \text{diam} \, (N) + 2 \rho + r_{0}], \\
        \overHe_{\varepsilon} (\tilde{d} + t - (2 \, \text{diam} \, (N) + \rho + r_{0})), & [2 \, \text{diam} \, (N) + 2 \rho + r_{0}, 4 \, \text{diam} \, (N) + \rho + r_{0}], \\
        1 + t - (4 \, \text{diam}\, (N) + \rho + r_{0}), & [4 \, \text{diam}\, (N) + \rho + r_{0}, 4 \, \text{diam}\, (N) + \rho + r_{0} + b_{\varepsilon} - 1]
    \end{cases}
\end{equation*}

\bigskip 

This path satisfies the following; $\gamma_{\varepsilon} (-1 - a_{\varepsilon}) = a_{\varepsilon}$, $\gamma_{\varepsilon} (4 \, \text{diam}\, (N) + r_{0} + \rho + b_{\varepsilon} - 1) = b_{\varepsilon}$, and 
\begin{equation*}
    \gamma_{\varepsilon} (t) < 2 \sigma \mathcal{H}^{n} (M) - \sigma \lambda \mu_{g} (E) + \sigma \lambda \mu_{g} (N \setminus E) - \frac{\sigma \mathcal{H}^{n} (A_{l})}{2 (2^{n} - 1)} + \tau,
\end{equation*}
for all $t$ in $[-1 - a_{\varepsilon}, 4 \, \text{diam}\, (N) + r_{0} + \rho + b_{\varepsilon} - 1]$.
This contradicts the min-max construction of $M$, implying that $M$ must be embedded, and therefore completing Theorem \ref{thm: Main Theorem}.

\section{Morse Index}\label{sec: index}

Recall the functional defined on Caccioppoli sets $\Omega \subset N$, 
\begin{equation*}
    F_{\lambda} (\Omega) = \mathcal{H}^{n} (\partial^{*} \Omega) - \lambda \mu_{g} (\Omega).
\end{equation*}
For a $C^{2}$ vector field $X$, we may take variations in direction $X$ by considering its flow $\{ \Phi_{t} \}$.
We define the first variation of $F_{\lambda}$ by, 
\begin{equation}\label{eqn: first variation of F lambda}
    \delta F_{\lambda} (\Omega) (X) = \frac{d}{dt} F_{\lambda} (\Phi_{t} (\Omega))_{| t = 0}.
\end{equation}
and the second variation by, 
\begin{equation}\label{eqn: second variation of F lambda}
    \delta^{2} F_{\lambda} (\Omega) (X) = \frac{d^{2}}{dt^{2}} F_{\lambda} (\Phi_{t} (\Omega))_{| t = 0}.
\end{equation}

\bigskip 

We have that $\delta F_{\lambda} (E) (X) = 0$, for all $C^{1}$ vector fields $X$.
Note that we require $M$ to be embedded and orientable for the following to be well-defined.
Consider the class of vector fields $X \in C^{2}_{c} (N \setminus (\overline{M} \setminus M))$, such that $X_{|M} = \varphi \nu$, where $\varphi \in C_{c}^{2} (M)$.
By \cite[Proposition 2.5]{doCarmo-Barbosa-CMC},
\begin{equation}
    \delta^{2} F_{\lambda} (E) (X) = \int_{M} |\nabla^{M} \varphi|^{2} - (|A_{M}|^{2} + \text{Ric} (\nu, \nu)) \varphi^{2} \, d \mathcal{H}^{n}.
\end{equation}
We extend the expression on the right-hand side to all functions in $W^{1,2}_{0} (M)$, and define the following quadratic form, 
\begin{equation*}
    B_{M} (\varphi, \varphi) \coloneqq \int_{M} |\nabla^{M} \varphi|^{2} - (|A_{M}|^{2} + \text{Ric} (\nu, \nu)) \varphi^{2} \, d \mathcal{H}^{n}, \hspace{1cm} \varphi \in W_{0}^{1,2} (M).
\end{equation*}
After integrating by parts we obtain the second order elliptic operator on $M$,
\begin{equation*}
    L_{M} \coloneqq \Delta_{M} + |A_{M}|^{2} + \text{Ric} (\nu, \nu).
\end{equation*}

\bigskip 

We restrict ourselves to a set $W \subset \subset N \setminus (\overline{M} \setminus M)$, to avoid our curvature term $|A_{M}|$, from potentially blowing up. 
A value $\kappa = \kappa (W) \in \mathbb{R}$ is said to be an eigenvalue of $L_{M}$ in $W$, if there exists a $\varphi \in W_{0}^{1,2} (W \cap M)$ such that 
\begin{equation*}
    L_{M} \varphi + \kappa \varphi = 0.
\end{equation*}
By standard elliptic theory, see \cite{Gilbarg-Trudinger}, the spectrum of $L_{M}$ in $W \cap M$,
\begin{equation*}
    \kappa_{1} (W) \leq \kappa_{2} (W) \leq \cdots \rightarrow + \infty,
\end{equation*}
is discrete and bounded from below. 
We then define the index of $M$ in $W$ by, 
\begin{equation*}
    \text{ind}_{W} (M) = |\{ p \colon \kappa_{p} (W) < 0 \}|,
\end{equation*}
or equivalently, it is the maximum dimension of a linear subspace of $W_{0}^{1,2} (W \cap M)$ on which $B_{M}$ is negative definite. 
If $\text{ind}_{W} (\text{ind} \, M) = 0$, then we say that $M$ is stable in $W$ and $\kappa_{p} (W) \geq 0$, for all $p$ in $\mathbb{N}$, and 
\begin{equation*}
    B_{M} (\varphi, \varphi) \geq 0, \, for \, all \, \varphi \in W_{0}^{1,2} (W \cap M).
\end{equation*}
We define, 
\begin{equation*}
    \text{ind} (M) = \sup_{W \subset \subset N \setminus (\overline{M} \setminus M)} (\text{ind}_{W} (M)).
\end{equation*}

\bigskip 

As $M$ is embedded, and our sequence of critical points $\{u_{i}\}$ from Section \ref{subsec: Allen-Cahn and CMC preliminaries} has $\text{ind} \, u_{i} \leq 1$, by \cite[Theorem 1a.]{Mantoulidis_2022}, we have that $\text{ind} M \leq 1$. 

\bigskip 

\begin{rem}
    As $M$ is two-sided and embedded, and the inhomogeneous term is a constant, we may also apply the ideas and arguments of \cite{Fritz-index-paper} verbatim to conclude that $\text{ind} M \leq 1$.
\end{rem}

\begin{claim}\label{claim: lower bound on index of M}
    $\text{ind} \, M  = 1$.
\end{claim}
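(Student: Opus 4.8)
\textbf{Proof proposal for Claim \ref{claim: lower bound on index of M}.}

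The plan is to show that $\mathrm{ind}\,M \geq 1$, which combined with the already-established upper bound $\mathrm{ind}\,M \leq 1$ gives equality. The key point is that $M$ cannot be stable: if it were, then by the second variation formula one could produce an area-decreasing deformation (in a suitable constrained sense) that would contradict the mountain-pass characterisation of $M$. More precisely, I would argue by contradiction: suppose $M$ is stable, i.e.\ $B_M(\varphi,\varphi) \geq 0$ for all $\varphi \in W_0^{1,2}(W\cap M)$ and all $W\subset\subset N\setminus(\overline M\setminus M)$. The strategy is to revisit the sliding path from the proof of Theorem \ref{thm: Allen-Cahn minmax limit} — the path moving through the functions $v_\varepsilon^r = \overline{\mathbb H}_\varepsilon(\tilde d - r)$ — and observe that stability would force the maximum of $\mathcal{F}_{\varepsilon,\lambda}$ along this path to be attained \emph{only} in a degenerate way, allowing a local modification of the path near its maximum (by pushing $M$ in the normal direction with a profile given by a first eigenfunction, or simply by exploiting that the function $t\mapsto \mathcal F_\lambda(E_t)$ has a critical point at $t=0$ with non-negative second derivative but the geometry forces it to be strictly negative) that strictly lowers the max, contradicting the min-max value $\beta_\varepsilon$.

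Concretely, the cleanest route uses the structure already exposed in the ``Idea of Proof'' section: we have the identity
\begin{equation*}
    \frac{d}{ds}\Big|_{s=0}\mathcal F_\lambda(E_s) = \int_{\Gamma_0} \big(\lambda - H_{\Gamma_0}(x)\big)\, d\mathcal H^n = 0,
\end{equation*}
and the second derivative of $s\mapsto \mathcal F_\lambda(E_s)$ at $s=0$ is, by Remark \ref{rem: Bounds on mean curvature of level sets} and Proposition \ref{prop: derivative of H t is negative}, controlled by $-\int_{M}\partial_t H_t|_{t=0}\,d\mathcal H^n \leq -m\,\mathcal H^n(M) < 0$ (using $\mathrm{Ric}_g \geq m > 0$). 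Thus the translating path has a \emph{strict} local maximum of $\mathcal F_\lambda$ at $E_0 = E$, with a definite second-order decrease of size $\sim m\,\mathcal H^n(M)$. Translating this to the $\varepsilon$-level (as was done throughout the paper via the Abstract Cylinder computation in Section \ref{sec: Baseline Computation}, now with $g(r,x) = r\cdot(\text{const})$), this means $\mathcal F_{\varepsilon,\lambda}(v_\varepsilon^r) \leq \mathcal F_{\varepsilon,\lambda}(v_\varepsilon^0) - c\, r^2 + o_\varepsilon(1)$ for $|r|$ small. Now if $M$ were stable, the index-1 structure of $u_\varepsilon$ would be incompatible: one can deform the min-max path transversally to the translation direction using a first eigenfunction $\varphi_1$ of $L_M$ (which, under the stability assumption, has $\kappa_1(W)\geq 0$ for all $W$) — but here is where the contradiction crystallises. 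The actual mechanism I would use is the following: since $M$ arises from a one-parameter (mountain-pass) min-max and $\mathcal F_{\varepsilon,\lambda}(u_\varepsilon) = \beta_\varepsilon$ is the min-max value, the second variation of $\mathcal F_{\varepsilon,\lambda}$ at $u_\varepsilon$ must be negative in \emph{at least one} direction (otherwise $u_\varepsilon$ would be a local minimum, contradicting that it lies strictly above the endpoints $a_\varepsilon, b_\varepsilon$ along every path); this gives $\mathrm{ind}\,u_\varepsilon \geq 1$, and in the Ricci-positive setting (as already noted after Proposition \ref{prop mountain pass solution}) in fact $\mathrm{ind}\,u_\varepsilon = 1$ exactly. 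Passing to the limit and invoking \cite{Mantoulidis_2022} (or \cite{Fritz-index-paper}) in the reverse direction — these results give a two-sided correspondence between the index of the limit interface and the index of the approximating solutions, provided the limit is embedded and two-sided, which we now know — yields $\mathrm{ind}\,M \geq 1$.

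So the proof reduces to: (i) recall $\mathrm{ind}\,u_\varepsilon \geq 1$ from the mountain-pass construction (this is essentially contained in Proposition \ref{prop mountain pass solution} together with the remark that the only stable critical points are the constants $a_\varepsilon, b_\varepsilon$); (ii) invoke the lower-bound direction of the index comparison theorem of Mantoulidis \cite[Theorem 1]{Mantoulidis_2022}, which is applicable precisely because Theorem \ref{thm: restatement of Allen-Cahn minmax limit} has established that $M = \mathrm{gen}\text{-}\mathrm{reg}\,V_\lambda$ is embedded, two-sided, and $V_0 = 0$ (so there is no minimal component to absorb index); (iii) conclude $\mathrm{ind}\,M\geq 1$ and combine with the already-proven $\mathrm{ind}\,M\leq 1$. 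The main obstacle is verifying that the hypotheses of the index lower-bound result are genuinely met — in particular that the multiplicity-one, embeddedness, and two-sidedness conditions required by \cite{Mantoulidis_2022} hold, which is exactly why this Claim is placed \emph{after} Theorems \ref{thm: Main Theorem} and \ref{thm: Allen-Cahn minmax limit}; once those are in hand, the index lower bound is a black-box application. An alternative self-contained argument, avoiding \cite{Mantoulidis_2022}, would directly exhibit a single test function $\varphi\in C_c^2(M)$ with $B_M(\varphi,\varphi) < 0$: taking $\varphi \equiv 1$ (legitimate when $M$ is closed, i.e.\ $2\leq n\leq 6$, and approximable by cutoffs when $n\geq 7$) gives
\begin{equation*}
    B_M(1,1) = -\int_M \big(|A_M|^2 + \mathrm{Ric}(\nu,\nu)\big)\,d\mathcal H^n \leq -m\,\mathcal H^n(M) < 0,
\end{equation*}
which immediately forces $\mathrm{ind}\,M\geq 1$; this is in fact the quickest route and I would present it as the primary argument, with the cutoff argument for the singular higher-dimensional case handled by the standard logarithmic cutoff since $\overline M\setminus M$ has Hausdorff dimension $\leq n-7$ and hence vanishing $2$-capacity.
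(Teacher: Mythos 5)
Your concluding ``alternative self-contained argument'' --- taking $\varphi \equiv 1$ when $M$ is closed and using capacity cutoffs when $n\geq 7$ --- is exactly the paper's proof of the lower bound, and combined with the already-established upper bound from \cite{Mantoulidis_2022} it gives $\text{ind}\,M=1$. The bulk of what precedes it, however, is not a proof and should be discarded. The route ``invoke Mantoulidis in the reverse direction'' is not supported: \cite{Mantoulidis_2022} (like \cite{Fritz-index-paper}) gives an index \emph{upper} bound for the limit interface in terms of the index of the $u_\varepsilon$; a general lower-bound direction is not what that theorem states, and you yourself hedge on whether it exists. The sliding-path/second-variation discussion in the first half is suggestive but never crystallises into an argument, and is best omitted.

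One genuine gap remains in the version you do present. For the singular case $n\geq 7$ you invoke ``the standard logarithmic cutoff since $\overline M\setminus M$ has vanishing $2$-capacity.'' But the cutoff $f_\delta$ is constructed in the ambient manifold $N$, so the small-gradient estimate you get for free is $\int_N|\nabla f_\delta|^2\,d\mu_g<\delta$. What the second variation requires is the \emph{surface} integral $\int_M|\nabla^M f_\delta|^2\,d\mathcal H^n\lesssim\delta$, and this does not follow from the ambient bound without additional input on how $M$ sits near the singular set. The paper supplies exactly this input: since $|\overline M|$ is a multiplicity-one integral varifold with bounded generalised mean curvature, the monotonicity formula \cite[Corollary~17.8]{LSimonGMTNotes} gives Euclidean volume growth $\mathcal H^n(M\cap B_r)\lesssim r^n$ near every point of $\overline M$, and it is this growth, fed into the explicit construction of $f_\delta$ from \cite[Section~4.7.2]{EG1991measure}, that yields $\int_M|\nabla^M f_\delta|^2\leq C\delta$. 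Without noting this you have not actually verified that the test function is admissible for $B_M$. Apart from that omission (and the choice to keep the $\mathrm{Ric}(\nu,\nu)\geq m$ term rather than the $|A_M|^2\geq \lambda^2/n^2$ term, both of which work), your direct argument coincides with the paper's.
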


\begin{proof}
    We only need to show a lower bound, which follows from the Ricci positivity on $N$. 
    We construct an appropriate function on $M$, using a similar argument to \cite[Lemma 5.1]{bellettini2020multiplicity1}.

    \bigskip 

    We wish to prove that we can find a set $W \subset \subset N \setminus (\overline{M} \setminus M)$, and a function $\varphi$ in $W_{0}^{1,2} (M \cap W)$ such that, 
    \begin{equation*}
        B_{M} (\varphi, \varphi) < 0.
    \end{equation*}

    \bigskip 

    By the Ricci positivity of $N$, for any $W \subset \subset N \setminus (\overline{M} \setminus M)$, and $\varphi$ in $W_{0}^{1,2} (M \cap W)$, we have 
    \begin{equation*}
        B_{M} (\varphi, \varphi) \leq \int_{M} |\nabla^{M} \varphi|^{2} - |A_{M}|^{2} \varphi^{2} \, d \mathcal{H}^{n}. 
    \end{equation*}

    \bigskip 

    If $\overline{M} \setminus M = \emptyset$, we set $W = N$, and $\varphi = 1$,
    \begin{equation*}
        B_{M} (\varphi, \varphi) \leq - \int_{M} |A_{M}|^{2} \, d \mathcal{H}^{n} < 0. 
    \end{equation*} 

    \bigskip 

    For $\overline{M} \setminus M \not= \emptyset$, we first we note that we must have $n \geq 7$, and $\mathcal{H}^{n-1} (\overline{M} \setminus M) = 0$. 
    Therefore, the 2-capacity of $\overline{M} \setminus M$ is 0, \cite[Section 4.7.2, Theorem 3]{EG1991measure}, implying that for all $\delta > 0$, there exists a function $f_{\delta}$ such that, 
    \begin{equation*}
        \begin{cases}
            f_{\delta} \in C_{c}^{\infty} (N \setminus (\overline{M} \setminus M)), \\
            f_{\delta} (y) \in [0,1], \, y \in N, \\
            \int_{N} | \nabla f_{\delta}|^{2} d \mu_{g} < \delta, \\
            \mu_{g} (\{ f_{\delta} = 1\}) > \mu_{g} (N) - \delta.
        \end{cases}
    \end{equation*}
    Furthermore, as $|\overline{M}|$ is a multiplicity 1 integral varifold with uniformly bounded generalised mean curvature, we have a monotonicity formula \cite[Corollary 17.8]{LSimonGMTNotes}. 
    The existence of such a monotonicity formula implies Euclidean volume growth about each point in $\overline{M}$.
    Therefore, there exists a constant $C_{8} = C_{8} (N, M, g)$, such that, by the construction of $f_{\delta}$ as in \cite[Section 4.7.2, Theorem 3]{EG1991measure},
    \begin{equation*}
        \int_{M} |\nabla^{M} f_{\delta}|^2 \, d \mathcal{H}^{n} \leq C_{8} \delta.
    \end{equation*} 
    Taking $W_{\delta} = \text{supp} \, f_{\delta} \subset \subset N \setminus (\overline{M} \setminus M)$, we have that $(f_{\delta})_{|M} \in W^{1,2}_{0} (M \cap W_{\delta})$, and, 
    \begin{equation*}
        B_{M} (f_{\delta}, f_{\delta}) \leq C \delta - n^{-2} \lambda^{2} \mathcal{H}^{n} (\{ f_{\delta} = 1 \} \cap M).
    \end{equation*}
    We have that as we send $\delta \rightarrow 0$, $\mathcal{H}^{n} (\{ f_{\delta} = 1 \} \cap M) \rightarrow \mathcal{H}^{n} (M)$.
    Therefore, for small enough $\delta > 0$, we have that $B_{M} (f_{\delta}, f_{\delta}) < 0$.
    This implies that $\text{ind} M \geq 1$.
\end{proof}

The fact that $M$ is connected immediately follows from this, as on each connected component we could construct a function as in Claim \ref{claim: lower bound on index of M}. 
Therefore, each connected component adds at least $1$ to the index.

\end{document}